\tikzstyle{block} = [rectangle, draw,
\tikzstyle{block2} = [rectangle, draw,
\tikzstyle{block3} = [rectangle,
\tikzstyle{line} = [draw, -implies, double distance=3pt]
\tikzstyle{dashedline} = [draw, dashed]
\tikzstyle{doubleimplies} = [draw, implies-implies, double distance=3pt]
\tikzstyle{arrow} = [draw, -{Latex[length=3mm]}]
\tikzstyle{doublearrow} = [draw, {Latex[length=3mm]}-{Latex[length=3mm]}]
\tikzstyle{noblock} = [rectangle, text width=5em, text centered,
\tikzstyle{widenoblock} = [rectangle, text width=9em, text centered,
\tikzstyle{wideblock} = [rectangle, draw,
\tikzstyle{widesmallblock} = [rectangle, draw,
\newcommand{\Pval}{\mathfrak p}
\newcommand{\ML}{\textnormal{ML}}
\newcommand{\JP}{\textnormal{JP}}
\newcommand{\NA}{\textnormal{NA}}
\newcommand{\RR}{\mathbb{R}}
\newcommand{\NN}{\mathbb{N}}
\newcommand{\EE}{\mathtt{E}}
\newcommand{\Pbb}{\mathcal{P}}
\newcommand{\Ptt}{\mathtt{P}}
\newcommand{\Qtt}{\mathtt{Q}}
\newcommand{\Rtt}{\mathtt{R}}
\newcommand{\1}{\mathbf{1}}
\newcommand{\dd}{\mathrm{d}}
\newcommand{\Xcal}{\mathcal{X}}
\newcommand{\Fcal}{\mathcal{F}}
\newcommand{\Acal}{\mathcal{A}}
\newcommand{\Pcal}{\mathcal{P}}
\newcommand{\Qcal}{\mathcal{Q}}
\newcommand{\Gcal}{\mathcal{G}}
\newcommand{\half}{\tfrac{1}{2}}
\DeclareMathOperator*{\esssup}{ess\,sup}
\newtheorem{theorem}{Theorem}
\newtheorem{lemma}[theorem]{Lemma}
\newtheorem{proposition}[theorem]{Proposition}
\newtheorem{example}[theorem]{Example}
\newtheorem{remark}[theorem]{Remark}
\newtheorem{definition}[theorem]{Definition}
\newtheorem{corollary}[theorem]{Corollary}
\title{
Testing exchangeability:\\
fork-convexity, supermartingales, and e-processes
\bigskip
}
\author{
Aaditya Ramdas$^{1}$,  Johannes Ruf$^{2}$, Martin Larsson$^{3}$,  Wouter M. Koolen$^{4}$\\
\and 
$^{1}$ Departments of Statistics and Machine Learning, Carnegie Mellon University \\
$^{2}$ Department of Mathematics, London School of Economics\\
$^{3}$ Department of Mathematical Sciences, Carnegie Mellon University \\
$^{4}$ Machine Learning Group, CWI Amsterdam \\
\and
 \texttt{\{aramdas,martinl\}@andrew.cmu.edu}\\
 \texttt{wmkoolen@cwi.nl, j.ruf@lse.ac.uk}
}
\begin{document}

\maketitle

\begin{abstract}
Suppose we observe an infinite series of coin flips $X_1,X_2,\ldots$, and wish to  sequentially test the null that these binary random variables are exchangeable. Nonnegative supermartingales (NSMs) are a workhorse of sequential inference, but we prove that they are powerless for this problem. First, 
utilizing a geometric concept called fork-convexity (a sequential analog of convexity),
we show that any process that is an NSM under a set of distributions, is also necessarily an NSM under their ``fork-convex hull''.
Second, we demonstrate that the fork-convex hull of the exchangeable null consists of all possible laws over binary sequences; this implies that any NSM under exchangeability is necessarily nonincreasing, hence always yields a powerless test for any alternative. 
Since testing arbitrary deviations from exchangeability is information theoretically impossible, we focus on Markovian alternatives. 
We combine ideas from universal inference 
and the method of mixtures 
to derive a ``safe e-process'', which is a nonnegative process with expectation at most one under the null at any stopping time, and is upper bounded by a martingale, but is not itself an NSM. This in turn yields a level $\alpha$ sequential test that is consistent; regret bounds from universal coding also demonstrate rate-optimal power. We present ways to extend these results to any finite alphabet and to Markovian alternatives of any order using a ``double mixture'' approach. We provide an array of simulations, and give general approaches based on betting for unstructured or ill-specified alternatives. Finally, inspired by Shafer, Vovk, and Ville, we provide game-theoretic interpretations of our e-processes and pathwise results.
\end{abstract}

{\small
\paragraph{Keywords:}  
Anytime-valid sequential inference; composite Snell envelope; game-theoretic probability; method of mixtures; optional stopping; universal coding.
}

 \newpage

{\tableofcontents}

 \newpage

\hypersetup{linkcolor=blue}

\section{Testing exchangeability}

Suppose we observe a sequence of binary coin flips $X_1,X_2,\ldots$ one at a time. Consider the fundamental problem of testing if our data $(X_t)_{t \geq 1}$ form an exchangeable sequence: 
\[
    H_0: X_1,X_2,\ldots \text{ are exchangeable.}
\]
For the unfamiliar reader, $(X_t)_{t \geq 1}$ is exchangeable if and only if for every $t$ and every permutation $\sigma$ of the first $t$ indices, $(X_1,\ldots,X_t)$ has the same distribution as $(X_{\sigma(1)},\ldots,X_{\sigma(t)})$.
If we find enough evidence against the null, we would like to stop collecting data and reject the null as soon as possible. 
Let $\overline \Qcal$ represent the set of all exchangeable distributions over infinite binary sequences. Then, we can rephrase the null as $H_0: \text{the data are generated from some } \Qtt \in \overline \Qcal$.

Let $(\Fcal_t)_{t \geq 0}$ represent the canonical filtration of the data, where $\Fcal_0$ is the trivial sigma algebra and $\Fcal_t = \sigma(X_1,\dots,X_t)$. All martingale statements in this paper will implicitly refer to this canonical filtration. A level $\alpha$ sequential test for $\overline \Qcal$ (that is, for $H_0$) is any stopping time $\tau_\alpha$ such that
\begin{equation}\label{eq:type-1}
\sup_{\Qtt \in \overline \Qcal}~ \Qtt(\tau_\alpha < \infty) \leq \alpha,
\end{equation}
meaning that with probability $1-\alpha$, we never stop under the null. In this paper, we will design powerful sequential tests for exchangeability, and prove their consistency, and in some cases rate-optimality, against Markovian alternatives. 

Despite nonnegative supermartingales (NSMs) being a central object in sequential testing~\citep{howard_exponential_2018,howard_uniform_2019}, the nontriviality of our contribution stems from the following: Theorem~\ref{thm:every-law-fork-convex} proves that \emph{every NSM under exchangeability must be a nonincreasing process and thus powerless}. The way out is to look beyond NSMs, at an object called a safe e-process, which we introduce next. Safe e-processes are intimately related to NSMs --- all NSMs are safe e-processes, and all admissible safe e-processes are infima of NSMs~\cite{ramdas2020admissible} --- but the two concepts are different, and our paper seeks to explore this gap in detail and exploit it successfully.

\subsection{Safe e-processes and anytime-valid p-processes}

We define a $\overline \Qcal$-safe e-process to be a nonnegative sequence of adapted random variables $(E_t)_{t \geq 0}$ such that
\begin{equation}\label{eq:e-process}
\sup_{\Qtt \in \overline  \Qcal} ~ \sup_{\tau} ~ \EE_\Qtt [E_\tau] \leq 1,
\end{equation}
where the second supremum is over all stopping times $\tau$, possibly infinite.
Above, we interpret $E_\infty := \limsup_{t \to \infty} E_t$ to accommodate potentially infinite stopping times.
(As mentioned earlier, the filtration $(\Fcal_t)$ defined previously is fixed and implicit.)
\citet{howard_exponential_2018} do not use the term e-process, but they study exponential processes that are upper bounded by composite nonnegative supermartingales, which is a characterizing property of e-processes.
Related notions have been studied by \citet{shafer_test_2011,grunwald_safe_2019,ramdas2020admissible,vovk2019values}, amongst others.

\begin{remark}[on terminology]
An `e-variable' is a nonnegative random variable that has expectation at most one under the null~\cite{vovk2019values}; in other words $E_t$ is an e-variable for every $t$, while we call $(E_t)$ as an e-process due to the additional property~\eqref{eq:e-process} that insists that stopped e-processes are also e-variables, meaning that $E_\tau$ is an e-variable for any stopping time $\tau$. The value realized by an e-variable will be called an e-value. While recent works in this rapidly evolving area have sometimes blurred the distinctions between e-processes, e-variables and e-values, we believe that our choices of terminology may be more suitable to emphasize sequential problem setups involving stopping times.
\end{remark}

Large e-values encode evidence against the null, and it is easy to check that the stopping time
\begin{equation}\label{eq:stopping-safe-to-seq}
\kappa_\alpha := \inf\left\{t \geq 1: E_t \geq \frac{1}{\alpha}\right\}
\end{equation}
results in a level $\alpha$ sequential test by applying Markov's inequality to the stopped e-process. More details can be found in~\citep{ramdas2020admissible}, who also show that the sequence $(\Pval_t)$ defined by $\Pval_t := \inf_{s \leq t} 1/E_s$ is an anytime-valid (or $\overline \Qcal$-valid) p-process, meaning that
\begin{equation}\label{eq:pvalue}
\sup_{\Qtt \in \overline  \Qcal} ~ \sup_{\tau} ~ \Qtt(\Pval_\tau \leq \alpha) \leq \alpha, ~ \text{ for all $\alpha \in [0,1]$.}
\end{equation}
Here the second supremum is taken over all stopping times $\tau$.
Following the above remark, a p-variable is a random variable whose distribution is stochastically larger than uniform under the null. Thus, a stopped p-process is a p-variable, and we call its realization as a p-value (a number between 0 and 1).

Since such $\overline \Qcal$-safe e-processes yield both sequential tests and anytime-valid p-processes, we focus on constructing e-processes for the rest of this paper.
As a matter of convention, we always use $\kappa_\alpha$ to denote the above stopping time, that is the one that thresholds a safe e-process at level $1/\alpha$, while $\tau$ denotes a generic stopping time.

\paragraph{Power one and consistency.}
We call an e-process powerful if the corresponding test is powerful, and of course we desire a test that is consistent, meaning that its power goes to one with the sample size. Formally, a level $\alpha$ sequential test $\tau_\alpha$ has asymptotically power one against some family $\Pcal \supset \overline \Qcal$ if
 \begin{equation*}
 \inf_{\Ptt \in \Pcal \backslash \overline \Qcal} \Ptt(\tau_\alpha < \infty) = 1.
 \end{equation*}
 (Henceforth, if we mention some alternative $\Pcal$, it is understood that we desire power against $\Pcal\backslash\overline \Qcal$.)
 
A $\Qcal$-safe e-process $(E_t)$ is said to be consistent, or power one, if its associated sequential test $\kappa_\alpha$ from~\eqref{eq:stopping-safe-to-seq} is asymptotically power one at any level, meaning that
\begin{subequations}
 \begin{equation}\label{eq:power-one-e}
 \text{ for all $\alpha \in (0,1)$, ~}~ \inf_{\Ptt \in \Pcal \backslash \overline \Qcal} \Ptt(\kappa_\alpha < \infty) = 1, 
 \end{equation}
 \text{ or equivalently, }
 \begin{equation}
  \limsup_{t \to \infty} E_t = \infty, ~ \text{$\Ptt$-almost surely,  for every } \Ptt\in\Pcal \backslash \overline \Qcal.
 \end{equation}
 \end{subequations}

\subsection{Convex hulls and de Finetti's theorem}

Let the null set $\Qcal$ consist of 
all product distributions $\mathrm{Ber}(p)^\infty$ for some $p\in[0,1]$. Note that $\Qcal$ is a rich composite class of parametric distributions, whose convex hull is $\overline \Qcal$, which is a result well known as de Finetti's theorem~\cite{kirsch2018elementary}.

Any sequential test for $\Qcal$ is also valid for $\overline \Qcal$, a fact that we record below, proved in~\citep{ramdas2020admissible}.

\begin{proposition}
The properties of type-1 error control~\eqref{eq:type-1} and safety are closed under the convex hull, meaning that any $\Qcal$-safe e-process is also $\overline \Qcal$-safe, and any level $\alpha$ sequential test for $\Qcal$ is also valid for $\overline \Qcal$.
\end{proposition}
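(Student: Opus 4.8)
The plan is to exploit the fact that, by de Finetti's theorem, every $\Qtt \in \overline\Qcal$ admits a mixture representation $\Qtt = \int_{[0,1]} \mathrm{Ber}(p)^\infty \, \dd\mu(p)$ for some probability measure $\mu$ on $[0,1]$, so that membership in $\overline\Qcal$ reduces to averaging over the primitive product laws in $\Qcal$. Both claimed closure properties then become statements that averaging an inequality which holds for $\mu$-almost every primitive law preserves that inequality, which is nothing more than monotonicity of the integral together with an interchange of integration and expectation.

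For the safety claim, I would let $(E_t)$ be $\Qcal$-safe, fix $\Qtt \in \overline\Qcal$ with mixing measure $\mu$, and fix an arbitrary (possibly infinite) stopping time $\tau$, using the convention $E_\infty := \limsup_{t\to\infty} E_t$ consistent with~\eqref{eq:e-process}. Because $E_\tau \ge 0$, Tonelli's theorem lets me write $\EE_\Qtt[E_\tau] = \int_{[0,1]} \EE_{\mathrm{Ber}(p)^\infty}[E_\tau] \, \dd\mu(p)$, and since each inner expectation is at most $1$ by $\Qcal$-safety, the integral is at most $\int \dd\mu = 1$. Taking the supremum over $\tau$ and then over $\Qtt \in \overline\Qcal$ recovers exactly~\eqref{eq:e-process}, so $(E_t)$ is $\overline\Qcal$-safe. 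The argument for a level $\alpha$ test $\tau_\alpha$ is identical with $E_\tau$ replaced by the indicator $\1\{\tau_\alpha < \infty\}$: integrating the pointwise bound $\mathrm{Ber}(p)^\infty(\tau_\alpha < \infty) \le \alpha$ against $\mu$ yields $\Qtt(\tau_\alpha < \infty) \le \alpha$, which is~\eqref{eq:type-1}.

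I expect the only genuine obstacle to be measure-theoretic bookkeeping rather than any real difficulty. I will need the map $p \mapsto \EE_{\mathrm{Ber}(p)^\infty}[E_\tau]$ (respectively $p \mapsto \mathrm{Ber}(p)^\infty(\tau_\alpha<\infty)$) to be $\mu$-measurable so that the outer integral is well defined; this follows from the joint measurability of the kernel underlying the de Finetti representation, and is precisely what licenses the use of Tonelli. For readers content with finite convex combinations the statement is immediate from linearity of expectation, $\EE_{\sum_i \lambda_i \Ptt_i}[E_\tau] = \sum_i \lambda_i \EE_{\Ptt_i}[E_\tau] \le \sum_i \lambda_i = 1$; the mixture version above is the natural extension to the full (closed) convex hull that de Finetti identifies with $\overline\Qcal$.
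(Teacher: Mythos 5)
Your proof is correct and is essentially the argument the paper relies on: the paper records this proposition without an in-text proof, deferring to \citep{ramdas2020admissible}, and its surrounding discussion makes clear the intended route is exactly yours --- represent each $\Qtt \in \overline\Qcal$ as a de Finetti mixture $\int_{[0,1]} \mathrm{Ber}(p)^\infty \, \dd\mu(p)$ and use Tonelli to interchange the mixture integral with the expectation (resp.\ the probability of $\{\tau_\alpha < \infty\}$), so the bounds $\leq 1$ and $\leq \alpha$ are preserved by averaging. Your measurability remark (that $p \mapsto \EE_{\mathrm{Ber}(p)^\infty}[E_\tau]$ is measurable because $p \mapsto \mathrm{Ber}(p)^\infty$ is a probability kernel) correctly identifies the only technical point, so nothing further is needed.
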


As a consequence, we may restrict our attention to developing a $\Qcal$-safe e-process, and invoke the above fact to step from the i.i.d.\ setting to the exchangeable setting. This will be our approach in the rest of this paper.
Another consequence is that testing the null $\Qcal$ against the alternative $\overline \Qcal$ is futile; safe and consistent e-processes do not exist and neither do valid, power-one tests.

\begin{remark}
    To avoid confusion, we note that the convex combination of $\Qtt, \Qtt' \in \Qcal$ must be carefully interpreted. For example, if $\Qtt = \mathrm{Ber}(0.3)^\infty$ and $\Qtt' = \mathrm{Ber}(0.7)^\infty$ then a draw from $(\Qtt+\Qtt')/2$ produces either a sequence with 70$\%$ zeros or a sequence with with 70$\%$ ones, each with probability half, while it produces a sequence with equal number of zeros and ones with probability zero. Contrast this with the fact that a draw from $((\mathrm{Ber}(0.3) + \mathrm{Ber}(0.7)) / 2)^\infty$ is equally likely to produce a zero or a one at every point in time, and thus produces a sequence with equal zeros and ones with probability one. Thus, one must take care to differentiate between $((\mathrm{Ber}(0.3) + \mathrm{Ber}(0.7)) / 2)^\infty$, which is not in the convex hull of $\Qtt$ and $\Qtt'$, and $(\Qtt+\Qtt')/2$, which is. Later, we will see that the former lies in the closed \emph{fork-convex hull} of $\Qtt$ and $\Qtt'$.
\end{remark}

It is impossible to have a powerful test for $\overline 
\Qcal$ against its complement $\overline \Qcal^c$, since the alternative is too rich and consists of too many distributions that are too close to $\overline \Qcal$, meaning that there are too many ways to violate exchangeability. For example, it should be apparent to the reader that if the first coin has bias $p_1$ and every other coin has bias $p \neq p_1$, then the resulting sequence is not exchangeable but we would never be able to reliably detect this deviation. This example relies on ensuring that the information required to detect a deviation from the null is exhausted early on in the sequence. To avoid such pathologies it is necessary to restrict the alternative class in some meaningful way. Markovian alternatives are an attractive choice, balancing the needs of relevant practical motivation, tractable mathematical structure, succinct probabilistic description, and intuitive aesthetic appeal.

\subsection{Time-homogeneous Markovian alternatives}

 We focus on the setting of first-order Markov alternatives $\Pcal_1$, and return to address higher-order (and even more general) alternatives later. To describe $\Pcal_1$ more formally, each $\Ptt \in \Pcal_1$ represents a time-homogeneous first-order Markov process with parameters $p_{1|0}$ and $p_{1|1}$. Here we abbreviate $p_{0|0} = 1-p_{1|0}$ and $p_{0|1} = 1-p_{1|1}$. For arbitrary $k \in \mathbb{N}$, we will also consider $\Pcal_k$, the set of $k$-th order Markov processes.

Let $\mathcal C$ represent the distributions that result in constant sequences $(X_t)$, i.e., where $X_1=X_2=\ldots$. These include, of course, the distributions of deterministic sequences $0^\infty$ and $1^\infty$. The class $\mathcal C$ also includes their mixtures, which are distributions that first set $X_1 \sim \mathrm{Ber}(p)$ for some $p$ and then set $X_t=X_1$ for all $t>1$. In other words, conditional on $X_1$, the remaining observations are i.i.d.~$X_t \sim \mathrm{Ber}(X_1)$.

\begin{figure}
\includegraphics[width=0.9\textwidth]{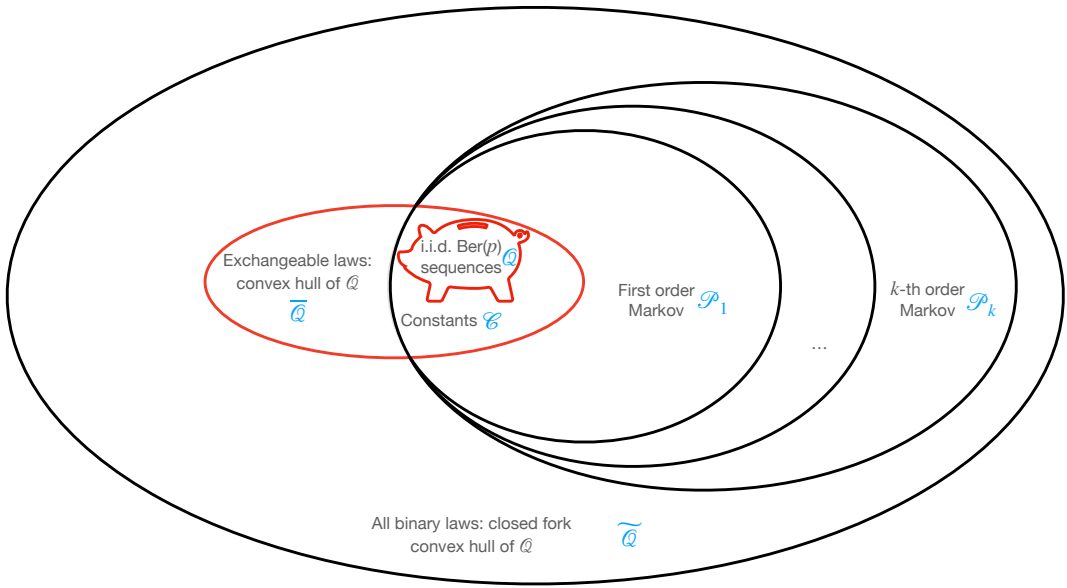}
\caption{Various classes of distributions over infinite binary sequences encountered in this paper. See Proposition~\ref{P:210407} and Theorem~\ref{thm:every-law-fork-convex} for the proof of the displayed set inclusions.}   
\label{fig:nested}
\end{figure}

\begin{proposition} \label{P:210407}
    For any $\{0, 1\}$-valued stochastic process $(X_t)$, the following two conditions are equivalent:
    \begin{enumerate}[label={\rm(\roman{*})}, ref={\rm(\roman{*})}] 
        \item\label{P:210407.1} $(X_t)$ is $k$-th order Markov (with respect to its natural filtration) for some $k \in \mathbb{N}$ and exchangeable.
        \item\label{P:210407.2} $(X_t)$ is either constant (as a function of time) or i.i.d.
    \end{enumerate}
Said differently, $\overline \Qcal \cap \Pcal_k = \Qcal \cup \mathcal C$ for any $k \in \NN$.
\end{proposition}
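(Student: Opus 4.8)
The plan is to prove the nontrivial implication \ref{P:210407.1} $\Rightarrow$ \ref{P:210407.2}; the converse is immediate, since an i.i.d.\ sequence is exchangeable and is $k$-th order Markov for every $k$, while a constant sequence $X_1=X_2=\cdots$ is exchangeable and first-order Markov (the transition from any state to itself has probability one). For the forward direction I would start from de Finetti's theorem: an exchangeable $\{0,1\}$-valued process has law $\int_0^1 \mathrm{Ber}(p)^\infty \, d\mu(p)$ for a unique probability measure $\mu$ on $[0,1]$. The goal then becomes showing that the $k$-th order Markov property forces $\mu$ to be either a single point mass (giving i.i.d., i.e.\ a member of $\Qcal$) or supported on $\{0,1\}$ (giving a constant sequence, i.e.\ a member of $\mathcal C$).

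The engine of the argument is a comparison of one-step predictive probabilities. By exchangeability, $P(X_{n+1}=1 \mid X_1,\dots,X_n)$ depends on the history only through the count $s=\sum_{i\le n}X_i$; write it as $q(n,s) = \int p^{s+1}(1-p)^{n-s}\,d\mu \big/ \int p^{s}(1-p)^{n-s}\,d\mu$ whenever the denominator is positive. The $k$-th order Markov property says this same quantity depends only on the last $k$ symbols. Fixing $n>k$ and a pattern $w$ for the final $k$ coordinates with $j$ ones, and letting the first $n-k$ coordinates range over all strings, exchangeability and the Markov property together force $q(n,s)$ to be constant in $s$ over the window $s \in \{j,\dots,j+n-k\}$. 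Letting $j$ run over $0,\dots,k$ produces $k+1$ such windows whose union is $\{0,\dots,n\}$ and whose consecutive members overlap (because $n-k\ge 1$); hence $q(n,\cdot)$ is globally constant, say $q(n,s)=c$ for all $s$ --- provided every conditioning event has positive probability. In that case $\int p^s(1-p)^{n-s}(p-c)\,d\mu = 0$ for all $s=0,\dots,n$; since the Bernstein polynomials $\{p^s(1-p)^{n-s}\}_{s=0}^n$ span all polynomials of degree $\le n$ and $p-c$ has degree $1 \le n$, taking the combination equal to $p-c$ yields $\int (p-c)^2\,d\mu = 0$, so $\mu=\delta_c$ and the process is i.i.d.

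The main obstacle --- and the reason the statement is a union rather than merely $\Qcal$ --- is the positivity caveat above. The chaining of windows is valid only when all intermediate histories have positive probability, and this is exactly what can fail. A short computation shows that $\int p^s(1-p)^{n-s}\,d\mu > 0$ for every $0\le s\le n$ and $n\ge 1$ if and only if $\mu$ charges the open interval $(0,1)$. I would therefore split into two cases. If $\mu((0,1))>0$, all histories are admissible, the argument of the previous paragraph applies, and we land on a point mass $\delta_c$ with $c\in(0,1)$: i.i.d. If $\mu((0,1))=0$, then $\mu=(1-\pi)\delta_0+\pi\delta_1$ for some $\pi\in[0,1]$, whose draws are $0^\infty$ with probability $1-\pi$ and $1^\infty$ with probability $\pi$; this is precisely a constant sequence in $\mathcal C$, and here the degenerate intermediate histories are exactly why the Markov constraint imposes nothing further.

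Combining the two cases yields \ref{P:210407.2}, and hence the set identity $\overline \Qcal \cap \Pcal_k = \Qcal \cup \mathcal C$. I note that this argument never uses time-homogeneity of the Markov process, only the dependence on the last $k$ symbols, which matches the phrasing of condition \ref{P:210407.1}.
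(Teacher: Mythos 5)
Your proof is correct, but it takes a genuinely different route from the paper's. The paper argues directly from the Markov structure: for $k=1$, exchangeability forces the transition matrix $M$ to satisfy $M^2=M$, and the dichotomy ``$M$ invertible (hence the identity, giving a constant sequence) versus $M$ of rank one (equal rows, giving independence and then, via exchangeability, i.i.d.)'' yields the result; for general $k$ the paper gives a verbal conditional-law argument (the law of $X_{2k+1}$ given the last $k$ coordinates equals, by exchangeability, its law given the first $k$, forcing either constancy or independence from the past). Your argument instead goes through de Finetti's representation $\int_0^1 \mathrm{Ber}(p)^\infty\,\dd\mu(p)$, chains overlapping count-windows to make the predictive probability $q(n,s)$ constant in $s$, and then uses the Bernstein-basis span to conclude $\int (p-c)^2\,\dd\mu=0$, i.e.\ $\mu=\delta_c$, with the degenerate case $\mu((0,1))=0$ producing exactly $\mathcal C$. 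What the paper's approach buys is brevity and self-containedness: it needs no representation theorem, only elementary linear algebra. What your approach buys is uniformity and rigor: it treats all orders $k$ in a single argument (where the paper generalizes from $k=1$ somewhat informally), it never uses time-homogeneity, it makes transparent \emph{why} the answer is the union $\Qcal \cup \mathcal C$ (the two branches of the dichotomy on $\mu$), and it explicitly confronts the positive-probability caveat in conditioning --- an issue the paper's proof passes over silently, since expressions like $\Ptt(X_3=j \mid X_2=i)$ are undefined when the process is, say, almost surely $0^\infty$. Both proofs are sound; yours is longer but arguably more complete as a general-$k$ argument.
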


This relation and more are illustrated in Figure~\ref{fig:nested}. The proof and some additional remarks can be found in Appendix~\ref{A:proofs}.

\subsection{Wald's sequential likelihood (or probability) ratio test}

Let the likelihood under a particular $\Qtt \in \Qcal$, where $\Qtt =\mathrm{Ber}(p)^\infty$, be represented by
\[
Q_t \equiv Q_p(X_1,\dots,X_t) := (1-p)^{n_0} p^{n_1},
\]
where $n_0 = n_0(t)$ and $n_1=n_1(t)$ represent the number of zeros and ones seen up to time $t$. The  likelihood associated to $\Ptt \in \Pcal_1$, say, for simplicity with $\Ptt(X_1=1)=1/2$, is given by
\[
  P_t \equiv P_{p_{1|0}, p_{1|1}}(X_1, \ldots, X_t) ~:=~
  \frac{1}{2}
  \prod_{s = 2}^t p_{X_s|X_{s-1}}
  ~=~
  \frac{
    1
  }{2}
  p_{0|0}^{n_{0|0}}
  p_{1|0}^{n_{1|0}}
  p_{0|1}^{n_{0|1}}
  p_{1|1}^{n_{1|1}}.
\]
Here $n_{1|0}\equiv n_{1|0}(t)$ denotes the total number of ones following zeros up to time $t$, etc. (We omit the dependence on $t$ to ease notational load.)

For a point null $\Qtt \in \Qcal$ and point alternative $\Ptt \in \Pcal_1$ with $\Ptt \neq \Qtt$,  Wald's sequential likelihood (or probability) ratio test (SLRT or SPRT)~\cite{wald_sequential_1945} yields a power-one test. The likelihood ratio process, i.e., $(P_t/Q_t)$, is a $\Qtt$-martingale starting at one and thus a $\Qtt$-safe e-process, and the resulting sequential test $\inf\{t \geq 0: P_t/Q_t \geq 1/\alpha\}$ is a level $\alpha$ test under $\Qtt$ that is known --- up to a discrete-time issue called ``boundary overshoot'' --- to have the smallest expected stopping time under $\Ptt$~\cite{wald_optimum_1948}.

For composite nulls and alternatives, the SLRT cannot be directly applied. The \emph{mixture SLRT} integrates over the alternatives using a ``prior'' distribution (or, more appropriately, mixture distribution, to avoid any Bayesian interpretations of our frequentist statements). However, this only works for composite alternatives, since mixing over the null set does not yield a safe e-process or the desired type-1 error control property in~\eqref{eq:type-1}. (We note here that, interestingly, the GROW e-variables of \cite{grunwald_safe_2019} are ratios of mixtures, though they are safe only for a fixed sample size.) The generalized SLRT maximizes the likelihood under both null and alternative, but this also does not yield a martingale or a safe e-process. In both cases, it is difficult to find a threshold for the resulting process that achieves type-1 error control in~\eqref{eq:type-1}, since the SLRT's choice of $1/\alpha$ does not suffice. For a discussion of techniques and results for this approach we refer to~\cite{kaufmann2018mixture,shin2021glrt}.

\subsection{Nonnegative supermartingales (NSMs) and an impossibility result}

Despite the above apparent difficulties in generalizing the SLRT to yield an e-process, it has been recently established that nonnegative (super)martingales play a fundamental role in the design of admissible sequential tests (and the construction of admissible safe e-processes), even for composite nonparametric nulls~\citep{howard_exponential_2018,ramdas2020admissible}. 
In anticipation of the results to follow, it is useful to set up some relevant notation. In what follows, a process $(M_t)_{t \geq 0}$ will be called a $\Qtt$-NM if it is a nonnegative martingale with initial value one, that is, $(M_t)$ is adapted to $(\Fcal_t)$, $M_0=1$ and $\EE_\Qtt[M_t | \Fcal_s] = M_s \geq 0$ for any $s \leq t$. Such processes are called test martingales by~\citet{shafer_test_2011}. If $(M_t)$ is a $\Qtt$-NM simultaneously for every $\Qtt \in \Qcal$, then we will call it a $\Qcal$-NM.  If the equality above is replaced by an inequality $\leq$, then we will call it a $\Qtt$-NSM or $\Qcal$-NSM (nonnegative supermartingale). 

An appropriate variant of the optional stopping theorem~\cite{durrett_probability:_2017} implies that for any $\Qcal$-NSM $(M_t)$ and any stopping time $\tau$ (potentially infinite), the stopped process has expectation at most one, or in other words
\[
\sup_{\Qtt \in \Qcal} \sup_{\tau} \EE_\Qtt[M_\tau] \leq 1,
\]
where the second supremum is over \emph{all} stopping times $\tau$, potentially infinite \citep[Section 3]{ramdas2020admissible}.
Indeed, for any $\Qcal$-NSM, $M_\infty := \lim_{t \to\infty} M_t$ is a well defined random variable, and $\sup_{\Qtt \in \Qcal} \EE_\Qtt[M_\infty] \leq 1 $.
The correspondence with the definition of a safe e-process (recall \eqref{eq:e-process}) is not coincidental --- to construct a $\Qcal$-safe e-process, it suffices to construct a $\Qcal$-NSM. However, we claim the following.

\begin{proposition}\label{prop:fail-NSM}
Every $\Qcal$-NSM is also a $\Pcal_1$-NSM (recall that $\Qcal$ and $\Pcal_1$ contain all i.i.d.~and first-order Markov distributions respectively). In fact, every $\Qcal$-NSM is also a $\Pcal_k$-NSM for every $k \in \NN$, and is also a $\Pcal$-NSM for \emph{any} set $\Pcal$. In other words, any $\Qcal$-safe e-process with nontrivial power cannot be a $\Qcal$-NSM, since the latter is powerless against any alternative $\Pcal$ by virtue of being a $\Pcal$-NSM.
\end{proposition}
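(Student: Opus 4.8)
The plan is to reduce the supermartingale property to its one-step form on each branch of the binary tree, and then observe that quantifying that one-step inequality over \emph{all} i.i.d.\ Bernoulli laws collapses it into a purely pointwise (deterministic) inequality that no longer references any measure. Because the data live on the canonical space $\{0,1\}^\infty$, adaptedness means each $M_t$ is literally a function of the finite history $(x_1,\dots,x_t)$, so there are no version- or null-set subtleties: every inequality I derive will hold for \emph{every} history, not merely almost surely.

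First I would fix a history $(x_1,\dots,x_t)\in\{0,1\}^t$ and note that, conditioned on $\Fcal_t$, the next value $M_{t+1}$ can take only the two values $a_0 := M_{t+1}(x_1,\dots,x_t,0)$ and $a_1 := M_{t+1}(x_1,\dots,x_t,1)$, according to whether $X_{t+1}=0$ or $1$. Under $\Qtt=\mathrm{Ber}(p)^\infty$ with $p\in(0,1)$, every finite history has strictly positive probability and $X_{t+1}$ is conditionally $\mathrm{Ber}(p)$ and independent of the past, so the $\Qtt$-supermartingale inequality $\EE_\Qtt[M_{t+1}\mid\Fcal_t]\le M_t$ reads, on this history,
\[
(1-p)\,a_0 + p\,a_1 \;\le\; M_t(x_1,\dots,x_t).
\]
Since $(M_t)$ is a $\Qcal$-NSM, this holds for \emph{every} $p\in(0,1)$. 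The left-hand side is affine in $p$, so letting $p\to 0^+$ and $p\to 1^-$ yields the two pointwise bounds $a_0\le M_t$ and $a_1\le M_t$ --- that is, both children of every node are dominated by their parent.

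With this deterministic domination in hand, the extension to an arbitrary law $\Ptt$ is immediate. Conditioned on $\Fcal_t$, under \emph{any} $\Ptt$ the variable $X_{t+1}$ is still $\{0,1\}$-valued, say with conditional one-probability $q$ (possibly history-dependent). Hence
\[
\EE_\Ptt[M_{t+1}\mid\Fcal_t] = (1-q)\,a_0 + q\,a_1 \;\le\; (1-q)\,M_t + q\,M_t = M_t,
\]
so $(M_t)$ is a $\Ptt$-supermartingale; nonnegativity and $M_0=1$ are inherited verbatim. As $\Ptt$ was arbitrary, $(M_t)$ is a $\Pcal$-NSM for every $\Pcal$, in particular for $\Pcal_1$ and each $\Pcal_k$. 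The final clause then follows since a $\Pcal$-NSM stopped at any time has expectation at most one, so thresholding it at $1/\alpha$ can never drive the process to infinity under any $\Ptt\in\Pcal$, giving a powerless test.

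I do not anticipate a genuine obstacle: the only point requiring care is the quantifier step --- recognizing that ranging over all Bernoulli parameters, each of which charges every finite history, converts the measure-theoretic supermartingale inequality into a branch-wise deterministic one. The two-point structure of the binary alphabet is what makes the affine-in-$p$ argument exact; for a larger alphabet one would instead extremize the conditional expectation over all probability vectors on the simplex, which is the analogous (and equally routine) step I would use to obtain the finite-alphabet generalization.
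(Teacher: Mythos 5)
Your proof is correct, but it takes a genuinely different and more elementary route than the paper. The paper derives this proposition as a corollary of its fork-convexity machinery: Lemma~\ref{L_fconv_comb_supmg} shows supermartingality is preserved under fork-convex combinations, Theorem~\ref{thm:fork-safe} extends this to closed fork-convex hulls (via an $L^1$-approximation and Fatou argument), and Theorem~\ref{thm:every-law-fork-convex} shows that the closed fork-convex hull of $\Qcal$ contains every law on $\{0,1\}^\NN$ — including point masses $\delta_{\omega_0}$, from which the pointwise nonincreasing property is read off. Your argument bypasses all of this: by exploiting that the canonical space is atomic, so that every finite history has positive probability under every $\mathrm{Ber}(p)^\infty$ with $p \in (0,1)$, you convert the one-step supermartingale inequality into the family of pointwise inequalities $(1-p)a_0 + p a_1 \le M_t$ for all $p \in (0,1)$, and the affine-in-$p$ limit argument at $p \to 0^+, 1^-$ yields branch-wise domination $a_0, a_1 \le M_t$. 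This deterministic monotonicity immediately gives the NSM property under \emph{any} law, and in fact it directly recovers the stronger conclusion of Theorem~\ref{thm:every-law-fork-convex} (every $\Qcal$-NSM is nonincreasing, hence never exceeds $M_0 = 1$ and the test has power exactly zero). What your approach buys is brevity and self-containedness for the binary setting; what the paper's approach buys is generality and structure: fork-convexity works for arbitrary observation spaces $\Xcal$ and arbitrary locally dominated null sets, gives a geometric explanation (the fork-convex hull) of \emph{why} NSMs fail, and is the load-bearing concept for the paper's further results on composite Snell envelopes (Theorem~\ref{thm:P-Snell}, Corollary~\ref{cor:safe-fork-closure}), none of which would follow from the pointwise argument alone.
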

This proposition will follow from Theorem~\ref{thm:every-law-fork-convex}.
This paper is as much about understanding the above negative result, as about providing a positive result. In other words, this result probes at the ``gap'' between a $\Qcal$-safe e-process and a $\Qcal$-NSM. The former is a much weaker property than the latter. While the latter suffices for the former, it is by no means necessary, as recently observed in a more abstract setup~\citep{ramdas2020admissible}. Indeed, a $\Qcal$-NSM  $(N_t)$ satisfies the much stronger ``conditional'' property that
\[
\EE_\Qtt[N_\tau | \Fcal_s] \leq N_{\tau \wedge s} \text{ for every $\Qtt \in \Qcal$ and stopping time $\tau$},
\]
which implies the earlier mentioned optional stopping result. In fact, the above property is satisfied if and only if $(N_t)$ is a $\Qcal$-NSM, but the e-process property \eqref{eq:e-process} can be satisfied even by processes that are upper bounded by  $\Qtt$-NSMs for each $\Qtt \in \Qcal$, but are not themselves $\Qcal$-NSMs. It is exactly this gap that we will exploit going forward.

We provide a geometrical characterization of the above phenomenon: essentially, we will show that the above proposition is true because $\Pcal_k$ lies within the ``fork-convex hull'' of $\Qcal$, and we prove that this hull preserves the NSM property of a process. In fact, Theorem~\ref{thm:every-law-fork-convex} proves that the fork-convex hull of $\Qcal$ is all-encompassing, containing every law over binary sequences.  Thus, a $\Qcal$-NSM yields a powerless test against $\Pcal_k$ since it is automatically and unintentionally safe under the alternative as well as under the null.  Along the way, we will encounter other friends from martingale theory, such as the Snell envelope, which previously has not  played a prominent role in the mathematical treatment of sequential testing.

\subsection{A powerful $\Qcal$-safe e-process that is not a $\Qcal$-NSM}

\citet{ramdas2020admissible} show that any $\Qcal$-safe e-process is dominated by a $\Qcal$-safe e-process $(E_t)$ of the form
\[
E_t := \inf_{\Qtt \in \Qcal} M^\Qtt_t,
\]
where $(M^\Qtt_t)$ is a $\Qtt$-NM. As mentioned before, each limiting variable $M^\Qtt_\infty$ is well defined and has expectation at most one, and thus, $E_\infty := \limsup_{t \to \infty} E_t$ also has expectation at most one. 
The safety property immediately holds at infinite times as well, meaning that $\sup_{\Qtt \in \Qcal} \EE_\Qtt [E_\infty] \leq 1$ and $\sup_{\Qtt \in \Qcal}\sup_{\tau} \EE_\Qtt [E_\tau] \leq 1$,
where the second supremum is over all stopping times $\tau$.
To avoid too much suspense before we get into these 
subtle new concepts, we first present our solution immediately, and delay its derivation to the next section. To this end, define 
\begin{equation}\label{eq:solution}
  R_t ~:=~
  {
    \frac{\Gamma \left(n_{0|0}+\half\right) \Gamma \left(n_{0|1}+\half\right) \Gamma \left(n_{1|0}+\half\right) \Gamma \left(n_{1|1}+\half\right)}{ 2\Gamma\left(\half\right)^4 \Gamma (n_{0|0}+n_{1|0}+1) \Gamma (n_{0|1}+n_{1|1}+1)}
  } \ \Big\slash \ 
  {
   \left( \left(\frac{n_1}{t}\right)^{n_1}
    \left(\frac{n_0}{t}\right)^{n_0}
    \right)
  },
\end{equation}
where $\Gamma$ denotes the usual gamma function. 

\begin{theorem}\label{thm:valid}
The process $(R_t)$ is a $\Qcal$-safe e-process, and thus thresholding it at level $1/\alpha$ yields a level $\alpha$ sequential 
test~$\kappa_\alpha$ for exchangeability. 
\end{theorem}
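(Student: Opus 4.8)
The plan is to recognize $R_t$ as the ratio of a \emph{mixture likelihood} over the Markov alternative to the \emph{maximized likelihood} over the i.i.d.\ null, and then to dominate it pathwise by a nonnegative (super)martingale under each individual null law $\mathrm{Ber}(p)^\infty$. First I would identify the two pieces. Writing $\bar P_t$ for the numerator of~\eqref{eq:solution}, a direct Beta integral identity shows that
\[
\bar P_t = \frac{1}{2}\prod_{j\in\{0,1\}} \int_0^1 p^{\,n_{1|j}}(1-p)^{\,n_{0|j}}\,\frac{p^{-1/2}(1-p)^{-1/2}}{\Gamma(\half)^2}\,\dd p ,
\]
i.e.\ $\bar P_t$ is exactly the marginal likelihood of $(X_1,\dots,X_t)$ under $\Pcal_1$ when the two transition parameters are each assigned an independent Jeffreys $\mathrm{Beta}(\half,\half)$ prior and $X_1$ is uniform. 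In particular $\bar P_t$ is the $\Fcal_t$-marginal density of a genuine law $\bar\Ptt$ on binary sequences, so it obeys the consistency relation $\sum_{x\in\{0,1\}}\bar P_{t+1}(X_1,\dots,X_t,x)=\bar P_t(X_1,\dots,X_t)$. The denominator is the maximized i.i.d.\ likelihood $\hat Q_t:=\sup_{p\in[0,1]}p^{\,n_1}(1-p)^{\,n_0}=(n_1/t)^{n_1}(n_0/t)^{n_0}$, attained at the running MLE $\hat p=n_1/t$; it is $\Fcal_t$-measurable and strictly positive, so $R_t=\bar P_t/\hat Q_t\ge 0$ is well defined and adapted.

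Next I would exploit the inequality $\hat Q_t=\sup_p Q_t^{(p)}\ge Q_t^{(p)}$, where $Q_t^{(p)}:=p^{\,n_1}(1-p)^{\,n_0}$ is the likelihood of $\mathrm{Ber}(p)^\infty$, to obtain the pathwise domination $R_t=\bar P_t/\hat Q_t\le \bar P_t/Q_t^{(p)}=:M_t^{(p)}$ for \emph{every} $p\in[0,1]$; equivalently $R_t=\inf_{p\in[0,1]}M_t^{(p)}=\inf_{\Qtt\in\Qcal}M_t^\Qtt$, which is precisely the $\inf_\Qtt M_t^\Qtt$ form of the admissible e-processes recalled above. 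It then remains to verify that each $M^{(p)}$ is a $\Qtt$-NSM with $M_0^{(p)}=1$, via the standard likelihood-ratio argument: writing $q^{(p)}(1)=p$, $q^{(p)}(0)=1-p$ and using $Q_{t+1}^{(p)}=Q_t^{(p)}q^{(p)}(X_{t+1})$ together with the consistency relation,
\[
\EE_\Qtt\!\left[M_{t+1}^{(p)}\mid\Fcal_t\right]
=\frac{1}{Q_t^{(p)}}\sum_{x:\,q^{(p)}(x)>0}\bar P_{t+1}(X_1,\dots,X_t,x)
\le \frac{\bar P_t}{Q_t^{(p)}}=M_t^{(p)} .
\]
For $p\in(0,1)$ both summands are present and the inequality is an equality, so $M^{(p)}$ is in fact a $\Qtt$-NM; for the two boundary values $p\in\{0,1\}$ the law $\mathrm{Ber}(p)^\infty$ is degenerate and $M^{(p)}$ is a genuine NSM (one also checks directly that $R_t\le\tfrac12$ on the corresponding deterministic path).

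Finally I would conclude the safety bound. Fix $\Qtt=\mathrm{Ber}(p)^\infty\in\Qcal$ and a possibly infinite stopping time $\tau$. Optional stopping for the nonnegative supermartingale $M^{(p)}$ gives $\EE_\Qtt[M^{(p)}_{\tau\wedge t}]\le M_0^{(p)}=1$, and nonnegative supermartingale convergence yields a limit $M^{(p)}_\infty$ with $\EE_\Qtt[M^{(p)}_\infty]\le 1$ by Fatou. Since $0\le R_t\le M_t^{(p)}$ for all $t$, we get $R_\infty:=\limsup_t R_t\le M^{(p)}_\infty$, hence $\EE_\Qtt[R_\tau]\le \EE_\Qtt[M^{(p)}_\tau]\le 1$ for every $\tau$. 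Taking suprema over $\tau$ and over $\Qtt\in\Qcal$ establishes $\sup_{\Qtt\in\Qcal}\sup_\tau\EE_\Qtt[R_\tau]\le 1$, exactly the e-process property~\eqref{eq:e-process}; thus $(R_t)$ is a $\Qcal$-safe e-process. The level-$\alpha$ guarantee for exchangeability then follows by combining this with the earlier proposition that safety is preserved under convex hulls (so $(R_t)$ is also $\overline\Qcal$-safe) and with the reduction~\eqref{eq:stopping-safe-to-seq} from a safe e-process to the test $\kappa_\alpha$ via Markov's inequality applied to the stopped e-process.

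I expect the main difficulties to be bookkeeping rather than conceptual: verifying the Beta integral identity that pins $\bar P_t$ down as the Jeffreys-mixture Markov likelihood, and correctly handling the infinite stopping times and the boundary parameters $p\in\{0,1\}$, where $M^{(p)}$ is only a supermartingale. The genuine conceptual crux is simply recognizing the structure $R_t=\inf_{\Qtt\in\Qcal}\bar P_t/Q_t^{\Qtt}$ (mixture numerator, maximized-null denominator); once this is in hand, the supermartingale property and the safety bound are routine.
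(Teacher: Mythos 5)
Your proof is correct and takes essentially the same route as the paper: the paper's (implicit) proof likewise writes $R_t = \inf_{p}\,\Ptt_{w\times w}(X_1,\dots,X_t)/Q^{(p)}_t$, dominates it by the point-null mixture likelihood ratio $R^{\JP,p}_t$ (your $M^{(p)}_t$, whose $\mathrm{Ber}(p)^\infty$-martingale property the paper obtains via Fubini where you use consistency of the mixture marginals — the same fact), and concludes by optional stopping together with the convex-hull proposition and Markov's inequality. Your explicit handling of infinite stopping times and of the boundary cases $p\in\{0,1\}$ is more careful than the paper's sketch, but it is bookkeeping within the same argument, not a different approach.
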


The first three subsections of Section~\ref{sec:jeffreys+ML} is devoted to the justification of the above theorem (its proof is implicit in its derivation).
Later, Theorem~\ref{T:4} proves that this test has power one for first-order Markov alternatives, i.e., \eqref{eq:power-one-e} holds, and quantifies the rate of growth.
Above, $(R_t)$ is not itself a $\Qcal$-NSM, but is nevertheless upper bounded by a (different!) $\Qtt$-NM for every $\Qtt \in \Qcal$, resulting in it being a $\Qcal$-safe e-process. This idea is enabled by bringing together the method of mixtures (using Jeffreys' prior) for combining the composite alternative, with the maximum likelihood under the composite null. Beyond showing that it has power one, one can quantify that it has rate-optimal power by utilizing a regret bound from universal coding. The next section, among others, proves the above theorem, after which we turn to defining fork-convexity. Then we provide some numerical simulations and conclude with a discussion about this paper's approach compared to other possible approaches to the problem.

\subsection{A summary of this paper's contributions}

This paper proposes a nontrivial and powerful test for exchangeability, a fundamental and easy-to-state problem that exposes rich geometric and probabilistic structure. The most directly related work is that of Vovk~\cite{vovk_testing_randomness_2019}, who proposed a very different approach based on conformal prediction, discussed in Section~\ref{sec:vovk-conformal}.

The problem of testing exchangeability also serves as a test-bed to carefully examine the gap between processes that are composite NSMs, and processes that are composite safe e-processes. The former is a special case of the latter and the latter is a necessary ingredient for the former~\cite{ramdas2020admissible}, but the difference between these concepts has remained somewhat implicit in past work~\cite{howard_exponential_2018}. This work constitutes the first detailed investigation of a problem where nontrivial NSMs do not exist (Proposition~\ref{prop:fail-NSM}, Theorem~\ref{thm:every-law-fork-convex}) but powerful safe e-processes do exist (Theorem~\ref{thm:valid}, Theorem~\ref{T:4}).

In the process, we prove many results that are of independent interest. For example, the NSM property is closed under ``fork-convex combinations'' of distributions (Lemma~\ref{L_fconv_comb_supmg}, Theorem~\ref{thm:fork-safe}). We prove that a composite safe e-process can be improved upon by a composite NSM only if the set of distributions is fork-convex, and this improvement is given by a composite Snell envelope (Theorem~\ref{thm:P-Snell}, Corollary~\ref{cor:safe-fork-closure}).

Section~\ref{sec:sims} examines the empirical behavior of our safe e-process in a simulation suite that includes both basic sanity checks and rather subtle and nontrivial alternatives. Finally, Section~\ref{sec:game-theoretic} suggests a way to interpret our e-process in the language of game-theoretic protocols of Shafer and Vovk~\cite{shafer2019game}.

\section{Jeffreys' mixture meets maximum likelihood}\label{sec:jeffreys+ML}

As briefly mentioned earlier, if the null set was a singleton, say corresponding to $\mathrm{Ber}(p)^\infty$, and the alternative was also a singleton, such as when $(X_t)$ deterministically alternates between 0 and 1, then Wald's sequential likelihood ratio test~\cite{wald_sequential_1945} would immediately yield a solution to the problem at hand.
To elaborate, let $f_p$ denote the probability mass function of a $\mathrm{Ber}(p)$ random variable and let $L_{01}$ denote the likelihood function under the alternative:
\[
L_{01}(X_1,\dots,X_t) := \begin{cases}
1 & \text{ if } (X_1,X_2,\dots) = (0,1,0,1,\dots) ;\\
0 & \text{ otherwise. }
\end{cases}
\]
Then, for any point null (indexed by $p$) define the following likelihood ratio:
\[
R^p_t := \frac{L_{01}(X_1,\dots,X_t)}{\prod_{s = 1}^t f_p(X_s)}, \quad\text{  which equals } \quad
\begin{cases}
\frac1{p^{\lfloor t/2 \rfloor} (1-p)^{\lceil t/2 \rceil}} \text{ with $\mathrm{Ber}(p)^\infty$-probability } p^{\lfloor t/2 \rfloor} (1-p)^{\lceil t/2 \rceil}; \\
0 \text{ with $\mathrm{Ber}(p)^\infty$-probability } 1-p^{\lfloor t/2 \rfloor} (1-p)^{\lceil t/2 \rceil}.
\end{cases}
\]
It is easy to check that $(R^p_t)$ is a $\mathrm{Ber}(p)^\infty$-NM, and thus a $\mathrm{Ber}(p)^\infty$-safe e-process, and $\kappa_\alpha$ from~\eqref{eq:stopping-safe-to-seq} yields a valid level $\alpha$ sequential test that coincides with Wald's original proposal~\cite{wald_sequential_1945}. The question is how to generalize this approach to deal with a composite null and a composite alternative in a computationally tractable and statistically powerful manner. 

The following observation deals  the first blow: the only process that is a nonnegative martingale under every i.i.d.~Bernoulli sequence is one that is constant. In other words, the only $\Qcal$-NM is such that $M_t = 1$ for all $t \in \NN_0$. This obviously results in a powerless test. So we then turn our attention to constructing a $\Qcal$-NSM, or a test supermartingale. Unfortunately this approach is dealt a fatal blow by Proposition~\ref{prop:fail-NSM}. As alluded to in the introduction, we cannot employ mixtures in both numerator and denominator because it violates safety by lowering the denominator too much, and we cannot maximize the likelihood in the numerator and denominator because it violates safety by raising the numerator too much. 

Our proposal combines a suitably chosen mixture in the numerator with maximum likelihood in the denominator, thus avoiding both pitfalls.

\subsection{Dealing with the composite null via maximum likelihood}
To start, let us return to the point alternative described above (alternating 0 and 1), and just handle the composite null using maximum likelihood estimation, as proposed in universal inference~\cite{wasserman2020universal}. To elaborate, observe that 
\[
R^{\ML}_t:=\inf_{p \in [0,1]} R^p_t = 
\frac{\text{likelihood under the point alternative}}{\text{maximum likelihood under the null}}
\] 
is a $\Qcal$-safe $e$-value. Indeed, suppose the data truly comes from $\mathrm{Ber}(p^*)^\infty$ for an unknown $p^*$. Then, it is obvious that
$R^{\ML}_t \leq R_t^{p^*}$, 
where the latter process is a $\mathrm{Ber}(p^*)^\infty$-NM. Thus, for any $\Qtt \in \Qcal$ (corresponding to some $p^* \in [0,1]$) and any stopping time $\tau$, we have
\[
\EE_{\Qtt}[R^{\ML}_\tau] \leq \EE_{\Qtt}[R_\tau^{p^*}] \leq 1,
\]
where the last step invokes the optional stopping theorem for the $\mathrm{Ber}(p^*)^\infty$-NM $(R_t^{p^*})$.

To see that the resulting test has good power, note that  under the alternative,  $p=1/2$ uniquely achieves the above infimum at any even time $t \in 2\NN$, in which case the denominator equals $(1/2)^t$ and the numerator equals one. Thus, $R^{\ML}_t = 2^t$ at even times, and the test $\inf\{t \geq 0:  R^{\ML}_t \geq 1/\alpha\}$ is a valid level $\alpha$ sequential test that stops either at time $\lceil \ln(1/\alpha) / \ln(2) \rceil$ or at time $\lceil \ln(1/\alpha) / \ln(2) \rceil + 1$.

To recap, despite the fact that we cannot find a $\Qcal$-NM, $(R^{\ML}_t)$ is a powerful $\Qcal$-safe e-process against the considered point alternative. This test takes the ratio of the likelihood under the alternative to the maximum likelihood under the null. 
Next, we detail how to handle the composite alternative $\Pcal$ when testing a point null in $\Qcal$.

\subsection{Dealing with the composite alternative using a Jeffreys' mixture}

A standard way of dealing with composite alternatives is to ``mix'' over them by choosing an appropriate mixture distribution. This is sometimes called the Laplace method, pseudo-maximization, or Robbins' method of mixtures~\citep{robbins_boundary_1970,robbins_statistical_1970,robbins_expected_1974,robbins_iterated_1968,robbins_class_1972}. It was first utilized successfully primarily in parametric settings, but has recently re-emerged as a powerful tool for nonparametric inference ~\citep{de_la_pena_pseudo-maximization_2007,howard_uniform_2019,kaufmann2018mixture,waudby2020variance}.  This mixture is sometimes called a ``prior'', but it must be made clear that this is not a prior in the Bayesian sense --- any other distribution also suffices to serve as a mixture in terms of maintaining safety, and no extra assumption is made on the true data when such a ``working prior'' is employed. The choice of mixture does not affect safety, but it does affect computability and power. In the following, we work with a very particular choice of mixture because it yields closed-form expressions, and it yields small constants in regret bounds from the universal coding literature (which has implications for power). 

 Taking independent Jeffreys' priors (with densities $w(\theta) := 1/(\pi \sqrt{\theta(1-\theta)})$) for $p_{1|0}$ and $p_{1|1}$, we obtain the mixture likelihood
\begin{align*}
  \Ptt_{w \times w}(X_1, \ldots, X_t)
  &~:=~
  \int
  \Ptt_{p_{1|0}, p_{1|1}}(X_1, \ldots, X_t)
    w(p_{1|0}) w(p_{1|1}) \dd  (p_{1|0}, p_{1|1})
  \\
  &~=~
  \frac{\Gamma \left(n_{0|0}+\half\right) \Gamma \left(n_{0|1}+\half\right) \Gamma \left(n_{1|0}+\half\right) \Gamma \left(n_{1|1}+\half\right)}{2 \Gamma\left(\half\right)^4 \Gamma (n_{0|0}+n_{1|0}+1) \Gamma (n_{0|1}+n_{1|1}+1)}
  .
\end{align*}
Here we have taken the mixture over all first-order Markov distributions $\Ptt_{{p_{1|0}, p_{1|1}}}$, under which $\{X_1 = 1\}$ has probability $1/2$. 

Thus, for any point null represented by $\mathrm{Ber}(p)^\infty$, we can define the mixture likelihood ratio
\begin{equation}\label{eq:Jeffrey-point-null}
R_t^{\JP,p} := \frac{\Ptt_{w \times w}(X_1,\dots,X_t)}{\prod_{s = 1}^t f_p(X_s)} =  \frac{\text{Jeffreys' mixture over the alternative}}{\text{likelihood under the point null}}.
\end{equation}
Using Fubini's theorem to swap integrals, it is easy to check that $R_t^{\JP,p}$ is a $\mathrm{Ber}(p)^\infty$-NM, and the corresponding sequential test is Wald's usual mixture SLRT~\cite{wald_sequential_1947}. Note that it is the very particular form of this mixture that yields a closed form expression and thus a computationally feasible test. However, we do not use this mixture just for computational reasons; as we detail soon, combining it with the earlier maximum likelihood idea also yields a statistically near-optimal power.

\subsection{Combining Jeffreys' mixture with maximum likelihood}
Using, as in the previous example, that the likelihood under the null is maximised at $p = n_1/t$, where it evaluates to $ \left({n_1}/{t}\right)^{n_1}
    \left({n_0}/{t}\right)^{n_0}$,
we find that \[
R_t := \frac{\text{Jeffreys' mixture over the alternative}}{\text{maximum likelihood under the null}}
\]
reduces to the expression in \eqref{eq:solution}. This is a $\Qcal$-safe $e$-value by combining the arguments used for the safety of $(R^{\JP,p}_t)$ and $(R^{\ML}_t)$: swapping the maximum likelihood with the (unknown) true likelihood, and then employing Fubini's theorem.

We remark that any prior above would have yielded a $\Qcal$-safe $e$-value, and in fact any Beta prior would have yielded one in closed form, but the Jeffreys' mixture above allows us to invoke an appropriate optimal regret bound from the universal coding literature \cite{krichevsky1981} to Markov sources (see \cite{DBLP:journals/tit/TakeuchiKB13} for a discussion of the resulting optimality):
\begin{align}
  R_t
  &\ge
    \notag
  \frac{\tfrac12
    \left(\frac{n_{1|0}}{n_{1|0} + n_{0|0}}\right)^{n_{1|0}}
    \left(\frac{n_{0|0}}{n_{1|0} + n_{0|0}}\right)^{n_{0|0}}
    \left(\frac{n_{1|1}}{n_{1|1} + n_{0|1}}\right)^{n_{1|1}}
    \left(\frac{n_{0|1}}{n_{1|1} + n_{0|1}}\right)^{n_{0|1}}
  }{
    \left(\frac{n_1}{t}\right)^{n_1}
    \left(\frac{n_0}{t}\right)^{n_0}
  } \times e^{-\frac{1}{2} \ln (n_{1|0} + n_{0|0}) - \frac{1}{2} \ln (n_{1|1} + n_{0|1}) - O(1)}\\
  &=
    \label{eq:lbd}
    \frac{\text{maximum likelihood of Markov model}}{\text{maximum likelihood of Bernoulli model}} \times e^{-\ln t - O(1)}.
\end{align}
That is, $(R_t)$ starts gathering evidence against the null if the maximum likelihood for the first-order Markov chain outperforms the maximum likelihood for the Bernoulli model by a factor of order $t$. Note that this is a small hurdle to overcome, as the first term is growing exponentially fast in $t$ when the data are explained better by a Markov model, as argued next.

\subsection{A pathwise theorem on the power of $(R_t)$}

Although our main focus is on first-order Markov alternatives, $(R_t)$ actually has power under much more general alternatives. 
Consider any binary sequence for which the following limits exist:
\begin{subequations}\label{eq:limits}
\begin{equation}
\alpha := \lim_{t \to \infty} \frac{n_{1|1}}{t}, \qquad
\beta := \lim_{t \to \infty} \frac{n_{0|0}}{t}, \qquad
\end{equation}
\begin{equation}
\gamma := \lim_{t \to \infty} \frac{n_{1|0}}{t} = \lim_{t \to \infty} \frac{n_{0|1}}{t}, \qquad
p := \lim_{t \to \infty} \frac{n_1}{t} = \lim_{t \to \infty} \frac{n_{1|1} + n_{0|1}}{t}.
\end{equation}
\end{subequations}
The theorem that follows is a ``pathwise'' result, holding for any sequence where the above limits exist and satisfy an additional weak condition, described later.
These limits exist, for example, under \emph{any} $k$-th order Markov alternative (but may be random unless suitable irreducibility properties hold). Note that the equality of the two expressions for $\gamma$ follows from the fact that $n_{1|0}$ and $n_{0|1}$ can differ by at most one. Similarly, $n_1$ differs from $n_{1|1} + n_{0|1}$ by at most one, which explains why the two expressions for $p$ are equal. Moreover, note that one has the relations
\[
p ~=~ \alpha + \gamma ~=~ 1 - \beta - \gamma ~=~  \frac{p_{1|0}}{p_{1|0} + p_{0|1}},
\]
where we define
\[
p_{1|1} := \frac{\alpha}{p}, \quad
p_{0|1} := \frac{\gamma}{p}, \quad
p_{1|0} := \frac{\gamma}{1-p}, \quad
p_{0|0} := \frac{\beta}{1-p}.
\]
Here we use the convention $0/0:=1$. 
In the first-order Markov case these parameters are simply the transition probabilities.

Within the class of first-order Markov chains, the special case of i.i.d.\ Bernoulli data is characterized by the restriction $p_{1|1} = p_{1|0}$ (and hence $p_{0|1} = p_{0|0}$), or equivalently 
\begin{equation}\label{eq:identifiability}
\alpha (1-p) = \gamma p
\end{equation}
(and hence $\gamma (1-p) = \beta p$). For more general models, such as higher-order Markov chains, the restriction $\alpha (1-p) = \gamma p$ no longer characterizes the i.i.d.\ Bernoulli case. This is illustrated in Example~\ref{ex_2nd_Markov_no_power}, where it is also shown that $(R_t)$ can be powerless against such alternatives (that is, under higher-order Markov alternatives that appear to be Bernoulli when summarized by only first order transition parameters). Nonetheless, as explained in Remark~\ref{R_generic_k_Markov} below, in the suitably ergodic non-Bernoulli $k$-th order Markov case one still has $\alpha (1-p) \ne \gamma p$ \emph{generically}, and therefore achieves power against such alternatives thanks to the following theorem.

\begin{theorem} \label{T:4}
For any data sequence, if the limits $\alpha,\beta,\gamma,p$ in~\eqref{eq:limits} exist, then $\lim_{t \to \infty} \ln R_t/t = r^*$, where
\begin{align}
r^* &:= p \left( \ln\frac{1}{p} - p_{0|1} \ln \frac{1}{p_{1|0}} - p_{1|1} \ln \frac{1}{p_{1|1}} \right)
+ (1-p) \left( \ln \frac{1}{1 - p} - p_{0|0} \ln \frac{1}{p_{0|0}} - p_{1|0} \ln \frac{1}{p_{0|1}} \right) \nonumber \\
&=\gamma \ln \frac{\gamma}{1 - p} + \beta \ln \frac{\beta}{1 - p} + \alpha \ln \frac{\alpha}{p} + \gamma \ln \frac{\gamma}{p} - p \ln p - (1 - p) \ln (1 - p).
\label{eq:210513.1}
\end{align}
Note that the second expression is a difference in entropies between first-order Markovian and Bernoulli sources with the corresponding parameters.
Furthermore, if $\alpha (1-p) \ne \gamma p$ we have $r^* > 0$, that is, $(R_t)$ increases to infinity exponentially fast. 
In fact, $r^* = 0$ if and only if $\alpha(1-p) = \gamma p$. Finally, whenever the exchangeability null is true, $\lim_{t \to \infty} R_t = 0$ almost surely, in addition to $(R_t)$ being $\overline \Qcal$-safe. 
\end{theorem}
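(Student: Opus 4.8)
The plan is to split the claim into three pieces: (i) evaluating the limit $\lim_{t\to\infty}\ln R_t/t = r^*$ and checking the two displayed formulas agree; (ii) the sign characterization $r^*\ge0$ with equality iff $\alpha(1-p)=\gamma p$; and (iii) the almost-sure statement $R_t\to0$ under the null (the $\overline\Qcal$-safety having been established already). For (i) I would apply Stirling, $\ln\Gamma(x)=x\ln x-x+O(\ln x)$, to each Gamma factor in the numerator of \eqref{eq:solution}. Since every argument is a count bounded by $t$, the additive $O(\ln x)$ corrections, the fixed $\half$-shifts, and the constant $-\ln\bigl(2\Gamma(\half)^4\bigr)$ all contribute $O(\ln t)=o(t)$, so $\tfrac1t$ times the log-numerator converges, via the limits \eqref{eq:limits} and the continuity of $u\mapsto u\ln u$ (with $0\ln0=0$), to the negative conditional entropy rate $\beta\ln p_{0|0}+\gamma\ln p_{1|0}+\gamma\ln p_{0|1}+\alpha\ln p_{1|1}$. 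The log-denominator $n_1\ln(n_1/t)+n_0\ln(n_0/t)$ contributes $p\ln p+(1-p)\ln(1-p)$, and subtracting yields the entropy-difference (second) form of $r^*$; alternatively one reads this off \eqref{eq:lbd}, whose $-\ln t - O(1)$ redundancy is $o(t)$. Equality of the two displayed formulas is then a routine algebraic rearrangement using $\alpha+\gamma=p$, $\beta+\gamma=1-p$, and the definitions of the $p_{a|b}$.

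For (ii), using $(1-p)p_{0|0}=\beta$, $(1-p)p_{1|0}=\gamma$, $p\,p_{0|1}=\gamma$, and $p\,p_{1|1}=\alpha$, I would rewrite the second form as a stationary-weighted sum of relative entropies,
\[
r^* = (1-p)\,D\bigl((p_{0|0},p_{1|0})\,\big\|\,(1-p,p)\bigr) + p\,D\bigl((p_{0|1},p_{1|1})\,\big\|\,(1-p,p)\bigr).
\]
Both Kullback--Leibler terms are nonnegative, so $r^*\ge0$. Equality forces each active term to vanish, i.e.\ $(p_{0|0},p_{1|0})=(p_{0|1},p_{1|1})=(1-p,p)$, equivalently $p_{1|0}=p_{1|1}=p$; given the constraints this is exactly $\gamma=p(1-p)$ and $\alpha=p^2$, which is equivalent to $\alpha(1-p)=\gamma p$. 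This establishes $r^*=0\iff\alpha(1-p)=\gamma p$ and, contrapositively, $\alpha(1-p)\ne\gamma p\Rightarrow r^*>0$.

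For (iii), fix $p^*$ and work under $\mathrm{Ber}(p^*)^\infty$. The strong law gives $n_1/t\to p^*$ and $n_{a|b}/t\to$ the i.i.d.\ products, so $p=p^*$, $\alpha=(p^*)^2$, $\gamma=p^*(1-p^*)$, $\beta=(1-p^*)^2$, which satisfy $\alpha(1-p)=\gamma p$; hence $r^*=0$ by (ii) and $\ln R_t/t\to0$. This only yields subexponential decay, so to obtain $R_t\to0$ I would dominate: since the Bernoulli maximum likelihood exceeds the likelihood at $p^*$, we have $0\le R_t\le R_t^{\JP,p^*}$, and $(R_t^{\JP,p^*})$ is the $\mathrm{Ber}(p^*)^\infty$-NM likelihood ratio $\Ptt_{w\times w}/\prod_s f_{p^*}(X_s)$. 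This nonnegative martingale converges almost surely, and its limit is $0$ precisely when the mixture law $\Ptt_{w\times w}$ (Jeffreys over first-order Markov chains) is mutually singular with $\mathrm{Ber}(p^*)^\infty$. Singularity holds because under $\mathrm{Ber}(p^*)^\infty$ the empirical transition vector converges to $(p^*,p^*)$ a.s., whereas under $\Ptt_{w\times w}$ it converges to the drawn parameter $(p_{1|0},p_{1|1})$, and the product-Jeffreys prior assigns zero mass to the single point $(p^*,p^*)$. Thus $R_t^{\JP,p^*}\to0$ and therefore $R_t\to0$ almost surely under every $\mathrm{Ber}(p^*)^\infty$; de Finetti's theorem lifts this to every $\Qtt\in\overline\Qcal$, since conditionally on the de Finetti parameter the data are i.i.d.

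The computation of $r^*$ and its sign reduce to Stirling plus nonnegativity of relative entropy, so the genuine obstacle is upgrading ``$r^*=0$'' (which only controls the exponential rate) to the sharp almost-sure statement $R_t\to0$. The clean route is the mutual-singularity/martingale-convergence argument above, which avoids having to control the second-order fluctuation $\ln\bigl(\text{MLE Markov}/\text{MLE Bernoulli}\bigr)$ pathwise --- an $O_P(1)$, Wilks-type quantity that a law-of-the-iterated-logarithm bound could also tame, but far less cleanly than via singularity.
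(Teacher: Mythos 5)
Your proposal is correct and follows essentially the same route as the paper: the limit computation (whether done by direct Stirling on \eqref{eq:solution} or read off the regret bound \eqref{eq:lbd}, which is what the paper's proof does) yields the same entropy-difference formula; your weighted relative-entropy decomposition of $r^*$ is exactly the paper's Jensen-inequality step repackaged (nonnegativity of KL divergence, with the same equality condition $\alpha(1-p)=\gamma p$); and your martingale-convergence/mutual-singularity argument for $R_t \to 0$ under the null is a fleshed-out version of the paper's one-sentence claim that $(R_t)$ is dominated by the likelihood ratio of two mutually singular laws. The only differences are presentational: you spell out details the paper leaves implicit (the ergodic convergence of empirical transition frequencies under the Jeffreys mixture, the atomlessness of the prior, and the de Finetti conditioning step that lifts the i.i.d.\ case to all of $\overline\Qcal$), which is a useful but not structurally different elaboration.
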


Note that whenever the null is true, we necessarily have $r^*=0$. However, $r^*=0$ does not by itself characterize power or consistency; it only characterizes when $(R_t)$ grows exponentially. It is possible to construct certain ``borderline'' examples where $r^*=0$, but $(R_t)$ still increases to infinity (our test is powerful and consistent), albeit at a subexponential rate. We describe and explore such an example in the numerical simulations in Section~\ref{sec:sim-time-varying}.

In the case of a non-Bernoulli first-order Markov alternative, the condition of the theorem is satisfied for almost every realization of the data (that is, with probability one), provided the chain does not have any absorbing states. Indeed, with no absorbing states the Markov chain is recurrent and hence the ergodic theorem applies. This condition is necessary for a power one test, because if (say) $1$ is an absorbing state then there is positive probability of seeing only ones (unless, of course, the Markov chain starts at 0 with probability one). This is indistinguishable from a realization of an i.i.d.\ Ber($1$) sequence.

\begin{remark} \label{R_generic_k_Markov}
    Any $k$-th order Markov chain is characterized by its transition probabilities $p_{1|\boldsymbol s} \in [0,1]$ where $\boldsymbol s$ ranges over all prefixes of length $k$. Note that this determines $p_{0|\boldsymbol s} = 1 - p_{1|\boldsymbol s}$, which we thus do not count as separate parameters. The quantities $\alpha,\beta,\gamma,p$ defined previously, when they are deterministic, are real analytic functions of the transition probabilities. This implies that the condition $\alpha (1-p) \ne \gamma p$ of Theorem~\ref{T:4} holds ``generically'' (in the topological sense, i.e., for an open dense set of parameters). Indeed, a real analytic function is either identically zero, or nonzero on an open dense set, and we know that $\alpha (1-p) = \gamma p$ does not hold identically (i.e., for \emph{all} choices of parameters $p_{1|\boldsymbol s}$). This confirms that $\alpha (1-p) \ne \gamma p$ must hold generically.
\end{remark}

\begin{proof}[Proof of Theorem~\ref{T:4}]
Let $\ell(t)$ denote the logarithm of the ratio between the maximum likelihood for the first-order Markov model and the maximum likelihood for the Bernoulli model. We then have
\begin{align*}
\frac{\ell(t)}{t} &= \frac{n_{1|0}}{t} \ln \frac{n_{1|0}}{n_{1|0} + n_{0|0}}
    + \frac{n_{0|0}}{t} \ln \frac{n_{0|0}}{n_{1|0} + n_{0|0}}
    + \frac{n_{1|1}}{t} \ln \frac{n_{1|1}}{n_{1|1} + n_{0|1}} \\
    &\quad + \frac{n_{0|1}}{t} \ln \frac{n_{0|1}}{n_{1|1} + n_{0|1}}
    - \frac{n_1}{t} \ln \frac{n_1}{t} - \frac{n_0}{t} \ln \frac{n_0}{t}.
\end{align*}
Sending $t \to \infty$ the right-hand side converges to
\[
\gamma \ln \frac{\gamma}{1 - p} + \beta \ln \frac{\beta}{1 - p} + \alpha \ln \frac{\alpha}{p} + \gamma \ln \frac{\gamma}{p} - p \ln p - (1 - p) \ln (1 - p),
\]
which is precisely \eqref{eq:210513.1}. Since $\ell(t) - \ln t + O(1) \le \ln R_t \le \ell(t)$ by \eqref{eq:lbd} we deduce that $t^{-1}\ln R_t$ converges to $r^*$ as claimed.  Re-arranging \eqref{eq:210513.1} we get
\begin{equation} \label{eq_T_4_expr}
r^* = p \left( \ln\frac{1}{p} - \frac{\gamma}{p} \ln \frac{1 - p}{\gamma} - \frac{\alpha}{p} \ln \frac{p}{\alpha} \right)
+ (1-p) \left( \ln \frac{1}{1 - p} - \frac{\beta}{1-p} \ln \frac{1 - p}{\beta} - \frac{\gamma}{1-p} \ln \frac{p}{\gamma} \right).
\end{equation}

 If $\alpha (1-p) = \gamma p$ then one easily verifies that $r^* = 0$. 
Next, suppose $\alpha (1-p) \ne \gamma p$.
Since $\alpha + \gamma = p$, we may bound the first parenthesized expression in \eqref{eq_T_4_expr} using Jensen's inequality,
\[
\ln\frac{1}{p} - \frac{\gamma}{p} \ln \frac{1 - p}{\gamma} - \frac{\alpha}{p} \ln \frac{p}{\alpha}
> \ln\frac{1}{p} - \ln \left( \frac{\gamma}{p} \frac{1 - p}{\gamma} + \frac{\alpha}{p} \frac{p}{\alpha} \right) = 0,
\]
where the inequality is strict because $(1 - p)/{\gamma} \ne p/{\alpha}$. By the same token, the second parenthesized expression in \eqref{eq_T_4_expr} is strictly positive as well. In particular, $r^* > 0$. Finally, if the exchangeability null is true, $(R_t)$ converges to zero almost surely because it is dominated by the likelihood ratio corresponding to two singular distributions on the sequence space.
\end{proof}

\begin{example} \label{ex_2nd_Markov_no_power}
    Here we construct a non-exchangeable 2nd-order Markov chain $(X_t)$ where the above test does not have power one, i.e., $(R_t)$ does not converge to infinity, but instead to zero. This chain satisfies $p = 1/2$ and $\alpha = \gamma$, hence does not yield a counterexample to Theorem~\ref{T:4}  but illustrates the importance of the condition $\alpha (1-p) \ne \gamma p$. 
    To this end, set $p_{1|(0,0)} = p_{1|(0,1)} = 1$ and $p_{1|(1,0)} = p_{1|(1,1)} = 0$. To visualize this 2nd-order Markov chain, note that it alternates between two consecutive zeros and two consecutive ones. For large $t$ we then have $n_{0|0} \approx n_{0|1} \approx 
    n_{1|0} \approx n_{1|1} \approx t/4$ which yields for large $t$ the bound
    \[
        R_{4t} \leq \frac{\Gamma(t + 1)^4}{\Gamma(2t)^2}
            = \frac{(t!)^4}{(2t - 1)!^2}.
    \]
    By Stirling's approximation of factorials, $R_t$ tends to zero as $t$ tends to infinity. 
\end{example}

The above example motivates the following extension, which we only address briefly as a proof-of-concept since the details are conceptually straightforward but tedious.

\subsection{Extension to higher-order Markovian alternatives and context trees}

The aforementioned discussion derived a safe e-process and sequential test for exchangeability of a binary sequence, in particular targeting first-order Markovian alternatives. Neither the binary observations nor the first-order alternatives were particularly critical; these were adopted for clarity of exposition and simplicity of formulae.

Extensions to Markov sources of order $k > 1$ or alphabet sizes $d>2$ are immediate. We may treat each $k$-th order context $x \in \{1,\ldots,d\}^k$ as an independent $d$-ary prediction problem, and by mixing with independent Jeffreys' (which are Dirichlet$(1/2, \ldots, 1/2)$) priors (or equivalently, composing independent Krichevsky-Trofimov estimators), we obtain a computationally attractive e-process with regret bounded by $(d^k(d-1)/2) \ln t + O(1)$. In other words, we get a closed-form e-process $R^{k,d}_t$ --- whose details are tedious, despite being explicit, and thus omitted --- such that
\[
  R^{k,d}_t ~\ge~
  \frac{\text{maximum likelihood of order $k$ Markov model}}{\text{maximum likelihood of Bernoulli model}} \cdot \exp\left(- \frac{d^k (d-1)}{2} \ln t - O(1)\right).
\]
The (near)-optimality of this approach is discussed by~\citet{DBLP:journals/tit/TakeuchiKB13}. The e-process $(R_t)$ from \eqref{eq:solution} can be interpreted as $(R^{1,2}_t)$.

Further computationally attractive extensions include alternatives that consist of Markov sources of varying orders $k=1,2,\ldots$ (see the discussion on the mixture method for unions below). The even more general Context Tree models have the length of the context that should be taken into account depend on that very context \cite{willems1995}.

A similar calculation to the $k=1,d=2$ case done previously shows that $R^{k,d}_t \to \infty$, $\Ptt$--almost surely  for any alternative $\Ptt \in \Pcal_k \backslash \Qcal$, where $\Pcal_k$ is the set of Markovian distributions with order at most $k$. These developments lead naturally to the following subsection.

\subsection{Double-mixtures for a countable sequence of alternatives}
\label{rem:countable-alternatives}

    Let $\Pcal_1, \Pcal_2, \Pcal_3 \dots$ be a countable sequence of alternatives, that may or may not be nested. Suppose for every $k \in \NN$ one can design a safe e-process $(E^k_t)$ for testing $\Qcal$ against $\Pcal_k$ such that it has power one, meaning that for any  $\Ptt \in \Pcal_k \backslash \Qcal$, we have
    \[
    \limsup_{t \to \infty} E^k_t = \infty, \text{ $\Ptt$--almost surely.} 
    \]
    Then, one can design a safe e-process for $\Qcal$ against $\bigcup_{k \in \NN} \Pcal_k$ such that for any $\Ptt \in \bigcup_{k \in \NN} \Pcal^k \backslash \Qcal$, we have
    \[
    \limsup_{t \to \infty}  E_t = \infty, \text{ $\Ptt$--almost surely.}  
    \]
    The proof of the above claim is simple. We can, for example, define the ``double mixture''
    \[
    E_t := \sum_{k=1}^\infty \frac{6}{\pi^2 k^2} E^k_t,
    \]
    which is a countable mixture over the base e-processes (that might have been already mixed using Jeffreys' prior). It is straightforward to check that $(E_t)$ is a safe e-process under $\Qcal$, by invoking monotone convergence and linearity of expectation. To analyze its power, once an alternative $\Ptt$ has been picked, let  $\Pcal_{k^*}$ be the first element of the nested sequence that contains $\Ptt$. Since $\limsup_{t \to \infty}  E^{k^*}_t = \infty$, $\Ptt$--almost surely, the same property holds for $(E^{k^*}_t/k^{*2})$, and thus transfers to $(E_t)$ since e-processes are nonnegative. The computational challenge of calculating $(E_t)$ remains, but this can be reduced by instead calculating the $\Qcal$-safe e-process
    \[
    \widetilde E_t := \sum_{k=1}^t \frac{6}{\pi^2 k^2} E^k_t.
    \]
    At the (finite) time $k^*$, $\widetilde E_t$ begins to include to required term $E_t^{k^*}$, and thus inherits its property of approaching infinity almost surely (consistency). Replacing the sum $\sum_{k=1}^t$ by ~ $\sum_{k=1}^{f(t)}$ for any increasing function $f$ that grows to infinity, possibly with sublinear growth (such as $\ln(\cdot)$), can further save computation without losing the consistency property.

\subsection{\smash{Handling generic alternatives via non-anticipating likelihoods and betting}}\label{sec:betting}
If the alternative to exchangeability is not clearly specified, then a different
approach to the one above may be more suitable, as inspired by the recent work on universal inference by \citet{wasserman2020universal}. It involves a non-anticipating likelihood in the numerator, combined with an MLE in the denominator:
\[
R^{\NA}_t := \frac{\text{non-anticipating likelihood under the alternative}}{\text{maximum likelihood under the null}}.
\]
Here, the numerator is simply given by 
\begin{equation*}
\prod_{s = 1}^t g_s(X_s),
\end{equation*}
where $g_s$ is any ``non-anticipating'' probability mass function, meaning that it is specified before seeing $X_s$, but can be learnt using the first $s-1$ data points; in other words, $(g_t)$ is predictable with respect to $(\Fcal_t)$. One example would be to choose $g_s$ as the (smoothed) maximum likelihood estimator under the alternative using the first $s-1$ samples, but other approaches inspired by machine learning or time series modeling may also be employed. Note that any Bayesian mixture (including ours with Jeffreys' prior) is of this non-anticipating form, with $g_s$ being the associated predictive distribution.

It is easy to prove that $(R^{\NA}_t)$ is a $\Qcal$-safe e-process: each term can be verified to have conditional mean at most one by swapping the denominator for the (unknown) true null likelihood. The major strength of the above approach is that arbitrarily flexible nonparametric or model-free update rules can be used without sacrificing validity, thus opening up the potential for power against loosely specified alternatives or even the discovery of temporal patterns from the observed data. For example, one may employ a complex Bayesian working model that outputs the posterior predictive probability of observing a zero or one at the next step, and this would not violate any of our theoretical guarantees regardless of the choice of priors or working model. Despite such a strong validity guarantee, the current drawback of this approach is that for generic update rules, there may not be an existing regret bound that we may use to convince ourselves of its power. (Of course, such regret bounds would be available for specific update rules and specific alternatives, and the online learning literature is rapidly expanding the scope and types of available regret bounds for individual sequence prediction.)

As a final remark, this non-anticipating likelihood is closely related to the ``predictable-mixture'' approach recently explored by~\cite{waudby2020variance}, and has its roots in \citet[Eq. 10:10]{wald_sequential_1947}. In this vein, it is also closely related to testing hypotheses by betting, as popularized by Shafer and Vovk~\cite{shafer2019language,shafer2019game}; specifically $(g_t)$ can be viewed as a sequence of bets on the following outcome.

\section{Fork-convexity and $\Qcal$-Snell envelopes}\label{sec:fork-snell}

Forgetting for a moment some of the earlier claims made without proof, one of the main questions we seek to answer in this section is:
\begin{quote}
    When is a $\Qcal$-safe $e$-value simply a $\Qcal$-NM or $\Qcal$-NSM in disguise? In other words, is any $\Qcal$-safe $e$-value always improved (or recovered) by some  $\Qcal$-NM or $\Qcal$-NSM?
\end{quote}
Such a question was also asked in the latest preprint on safe testing by~\citet{grunwald_safe_2019}.
The necessity and sufficiency results of~\citet{ramdas2020admissible} imply that the answer in the singleton $\Qcal=\{\Qtt\}$ case is: \emph{always} (via the Doob decomposition of the Snell envelope).
The answer in the composite setting is: \emph{sometimes}. We now qualify the `sometimes' by delving into the rich probabilistic structure underlying safe $e$-values, examining its relationship to convex null sets, a concept called `fork-convexity', and a process that we call a `composite' Snell envelope, known from the mathematical theory of risk measures \cite{MR2276899}.

Most of this section does not depend on our observations being binary, and we  allow the data $(X_t)$ to take values in a more general space $\Xcal$. Some of the technical notions required below, such as local absolute continuity, likelihood ratio (or density) processes, and essential suprema, are reviewed in Appendix~\ref{sec:technical_appendix}.

\subsection{A sequential analog of convexity}

We first review the concept of \emph{fork-convexity}, which can be viewed as a sequential version of convexity.

\begin{definition}
Fix a reference measure $\Rtt$ on the sequence space $\Xcal^\NN$.
\begin{enumerate}
\item A \emph{fork-convex combination} of two locally dominated laws $\Qtt,\Qtt'$ with likelihood ratio processes $(Z_t), (Z_t')$ is another law $\Qtt''$ with likelihood ratio process
\begin{equation}\label{eq_fconv_comb}
Z''_t := \begin{cases}
Z_t, & t \le s \\
h Z_t + (1-h) Z_s \dfrac{Z'_t}{Z'_s}, & t>s 
\end{cases}
\end{equation}
for some $s \in \NN_0$ and some $\Fcal_s$-measurable random variable $h$ in $[0,1]$ with $h=1$ on $\{Z'_s=0\}$. The latter condition ensures that $(Z''_t)$ is well-defined and an $\Rtt$-martingale, as required for a likelihood ratio process.

\item A set $\Qcal$ of probability measures is called \emph{fork-convex} if every fork-convex combination of elements of $\Qcal$ still belongs to $\Qcal$.
\end{enumerate}
\end{definition}

Fork-convexity was first introduced by \v{Z}itkovi\'c \cite{MR1883202}. It is closely related to a concept in the literature on risk measures called \emph{m-stability}, due to Delbaen \cite{MR2276899}. A similar notion called \emph{rectangularity} was introduced by Epstein and Schneider \cite{MR2017864} to describe intertemporal preferences with multiple priors. Rectangularity has then been used extensively in the operations research literature in connection with robust Markov decision processes; see e.g.\ \cite{MR2142033,MR3029483,MR3500621}.

Note that fork-convexity implies convexity. To see this, observe that any (usual) convex combination $a \Qtt + (1-a)\Qtt'$ is also a fork-convex combination; just take $s=0$ and $h=a$ in \eqref{eq_fconv_comb} to get $Z''_t=a Z_t+(1-a)Z'_t$, which is the likelihood ratio process of $\Qtt'':=a \Qtt + (1-a)\Qtt'$. 

A set $\{\Qtt\}$ that consists of a single law is clearly fork-convex. A set $\{\Qtt^1,\Qtt^2\}$ consisting of two distinct laws will not be fork-convex; it is not even convex. However, one can form its ``fork-convex hull''. Here is the general definition.

\begin{definition}[The fork-convex hull and its closure] ~
\begin{enumerate}
    \item The intersection of all fork-convex sets that contain a given set $\Qcal_0$ is called the \emph{fork-convex hull} of $\Qcal_0$. (Note that there is at least one fork-convex set containing $\Qcal_0$, namely the set of \emph{all} laws.)
    \item The \emph{closed} fork-convex hull of $\Qcal$ is the closure of the fork-convex hull of $\Qcal$ with respect to $L^1(\Rtt)$ convergence of the likelihood ratio processes at each fixed time $t \in \NN$, where we recall $\Rtt$ is the assumed reference measure.
\end{enumerate}
\end{definition}

Just as for usual convex hulls, the fork-convex hull of $\Qcal_0$ consists of all finite fork-convex combinations of elements in $\Qcal_0$. Here a \emph{finite fork-convex combination} of some distributions $\Qtt^1,\ldots,\Qtt^n \in \Qcal_0$ is a distribution obtained by iteratively performing \eqref{eq_fconv_comb} a finite number of times on $\Qtt^1,\ldots,\Qtt^n$, on their fork-convex combinations, on \emph{their} fork-convex combinations, and so on. Closed fork-convex hulls play an important role in Theorem~\ref{thm:fork-safe} below.

 To provide some intuition for the definitions as applied to the null $\Qcal$ considered in this paper, one can imagine a more ``algorithmic'' process of producing distributions in the closed fork-convex hull. First pick any $p_1 \in [0,1]$ and observe $X_1 \sim \mathrm{Ber}(p_1)$. Then, after observing $X_1$, pick any $p_2$, and observe $X_2 \sim \mathrm{Ber}(p_2)$. Continue this process indefinitely. Then, the sequence $(p_t)$  is $(\Fcal_t)$-predictable and the resulting binary sequence has a law that is contained in the closed fork-convex hull of $\Qcal$.
 
 It may be instructive to consider another simple example. For a fixed $\mu \in [0,1]$, define $\Qcal^\mu$ as the set of product distributions $\Qtt$ over infinite $[0,1]$-valued sequences $(X_t)$ such that $\EE_\Qtt[X_t | \Fcal_{t-1}] = \EE_\Qtt[X_t] = \mu$, and define $\widetilde \Qcal^\mu$ as the set of  distributions $\Qtt$ (not necessarily of product form) over infinite $[0,1]$-valued sequences $(X_t)$ such that $\EE_\Qtt[X_t | \Fcal_{t-1}] = \mu$.
 Then $\Qcal^\mu$ is not fork-convex if $\mu \in (0,1)$ but $\widetilde \Qcal$ is, and the latter is the closed fork-convex hull of the former. The problem of sequentially estimating $\mu$ in this setup has been recently studied by~\citet{waudby2020variance}.

\subsection{No power against fork-convex hulls}
Consider a null set $\Qcal$ locally dominated by a reference measure $\Rtt$. We now establish the interesting fact that e-processes based on $\Qcal$-NSMs are powerless against any alternative in the closed fork-convex hull of $\Qcal$. We state this formally in Theorem~\ref{thm:fork-safe} below, but the underlying reason is contained in the following lemma.

\begin{lemma}\label{L_fconv_comb_supmg}
If $(L_t)$ is a supermartingale under two laws $\Qtt,\Qtt'$, then $(L_t)$ is also a supermartingale under every fork-convex combination $\Qtt''$ of $\Qtt$ and $\Qtt'$.
\end{lemma}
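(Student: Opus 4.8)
The plan is to work directly with likelihood ratio processes relative to the reference measure $\Rtt$ and translate the supermartingale condition into a statement about $\Rtt$-expectations. Recall that if $\Qtt$ has likelihood ratio process $(Z_t)$ with respect to $\Rtt$, then for an adapted process $(L_t)$, the $\Qtt$-supermartingale condition $\EE_\Qtt[L_t \mid \Fcal_{t-1}] \le L_{t-1}$ is equivalent (via the abstract Bayes rule / change-of-measure identity) to the $\Rtt$-statement $\EE_\Rtt[L_t Z_t \mid \Fcal_{t-1}] \le L_{t-1} Z_{t-1}$, i.e.\ $(L_t Z_t)$ is an $\Rtt$-supermartingale. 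So the first step is to record this translation and reduce the whole lemma to showing: if $(L_t Z_t)$ and $(L_t Z_t')$ are $\Rtt$-supermartingales, then $(L_t Z_t'')$ is an $\Rtt$-supermartingale, where $(Z_t'')$ is given by the fork-convex formula \eqref{eq_fconv_comb}.

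\textbf{The core computation.}
Next I would split on the two regimes in the definition of $(Z_t'')$. For times $t \le s$ we have $Z_t'' = Z_t$, so $(L_t Z_t'')$ agrees with $(L_t Z_t)$ there and the supermartingale inequality is inherited directly from $\Qtt$. The interesting case is $t > s$, where $Z_t'' = h Z_t + (1-h) Z_s Z_t'/Z_s'$. The key point is that $h$ and $Z_s, Z_s'$ are all $\Fcal_s$-measurable, hence constant relative to any conditioning on $\Fcal_{t-1}$ for $t-1 \ge s$. I would compute $\EE_\Rtt[L_t Z_t'' \mid \Fcal_{t-1}]$ by linearity: the first term contributes $h\,\EE_\Rtt[L_t Z_t \mid \Fcal_{t-1}] \le h\, L_{t-1} Z_{t-1}$ using that $(L_t Z_t)$ is an $\Rtt$-supermartingale, and the second term contributes $(1-h)\frac{Z_s}{Z_s'}\EE_\Rtt[L_t Z_t' \mid \Fcal_{t-1}] \le (1-h)\frac{Z_s}{Z_s'} L_{t-1} Z_{t-1}'$ using the $\Qtt'$-supermartingale property. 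Summing these gives exactly $L_{t-1} Z_{t-1}''$ for $t-1 > s$, establishing the one-step inequality in the interior.

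\textbf{The boundary step and the delicate point.}
The step I expect to be the main obstacle is the transition at $t = s+1$, i.e.\ verifying $\EE_\Rtt[L_{s+1} Z_{s+1}'' \mid \Fcal_s] \le L_s Z_s''$, since at time $s$ we have $Z_s'' = Z_s$ (not the mixed form), so the two regimes must be glued consistently. Here the calculation gives $h\,\EE_\Rtt[L_{s+1}Z_{s+1}\mid\Fcal_s] + (1-h)\frac{Z_s}{Z_s'}\EE_\Rtt[L_{s+1}Z_{s+1}'\mid\Fcal_s] \le h L_s Z_s + (1-h)\frac{Z_s}{Z_s'} L_s Z_s' = L_s Z_s(h + (1-h)) = L_s Z_s = L_s Z_s''$, which closes the argument. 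The subtle bookkeeping is the behavior on $\{Z_s' = 0\}$: the ratio $Z_s/Z_s'$ is ill-defined there, but the defining condition $h = 1$ on $\{Z_s'=0\}$ makes the second term vanish by convention, so $(Z_t'')$ remains well-defined and the inequality still holds on that event. I would handle this by restricting the second-term manipulations to $\{Z_s' > 0\}$ and noting that on $\{Z_s'=0\}$ the process reduces to $h Z_t = Z_t$, inheriting the $\Qtt$-supermartingale property outright. Since $\Qtt''$ ranges over arbitrary $s$ and $\Fcal_s$-measurable $h \in [0,1]$, this covers every fork-convex combination and completes the proof.
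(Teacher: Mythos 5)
Your proposal is correct and follows essentially the same route as the paper's proof: translate the $\Qtt$- and $\Qtt'$-supermartingale properties of $(L_t)$ into the statement that $(Z_t L_t)$ and $(Z'_t L_t)$ are $\Rtt$-supermartingales, then verify the one-step inequality for $(Z''_t L_t)$ separately for $t\le s$ and $t\ge s+1$ using linearity and the $\Fcal_s$-measurability of $h$, $Z_s$, $Z'_s$. The only differences are presentational --- you single out the gluing step at $t=s+1$ and the event $\{Z'_s=0\}$ explicitly, which the paper absorbs into its uniform treatment of $t\ge s+1$, and the paper additionally fixes the concrete dominating measure $\Rtt=(\Qtt+\Qtt')/2$.
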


\begin{proof}
Note that $\Qtt,\Qtt'$ are dominated by $\Rtt := (\Qtt + \Qtt')/2$. 
Fix any $s \in \NN_0$ and $\Fcal_s$-measurable random variable $h$ in $[0,1]$, and let $\Qtt''$ be the fork-convex combination of $\Qtt,\Qtt'$ given in \eqref{eq_fconv_comb}. In compliance with the definition, we restrict $h$ to satisfy $h=1$ on $\{Z'_s = 0\}$. Suppose $(L_t)$ is a supermartingale under $\Qtt$ and $\Qtt'$. Equivalently, $(Z_t L_t)$ and $(Z'_t L_t)$ are supermartingales under $\Rtt$. Thus for $t\in\{1,\ldots,s\}$ we have
\[
\EE_\Rtt[ Z''_t L_t \mid \Fcal_{t-1}] = \EE_\Rtt[ Z_t L_t \mid \Fcal_{t-1}] \le Z_{t-1} L_{t-1} = Z''_{t-1} L_{t-1}.
\]
For $t \ge s+1$ we have  
\[
\EE_\Rtt[ Z''_t L_t \mid \Fcal_{t-1}] = h \EE_\Rtt[ Z_t L_t \mid \Fcal_{t-1} ] + (1-h) Z_s \EE_\Rtt\left[\left. \frac{Z'_t}{Z'_s} L_t \right| \Fcal_{t-1} \right] \le Z''_{t-1} L_{t-1}.
\]
Thus $(Z''_t L_t)$ is an $\Rtt$-supermartingale, or equivalently, $(L_t)$ is a $\Qtt''$-supermartingale.
\end{proof}

The following theorem refers to the \emph{closed} fork-convex hull of $\Qcal$. Recall that this is the closure of the fork-convex hull of $\Qcal$, understood in the sense of $L^1(\Rtt)$ convergence of the likelihood ratio processes at each fixed time $t \in \NN$.

\begin{theorem}\label{thm:fork-safe}
Let $\widetilde\Qcal$ be the closed fork-convex hull of $\Qcal$. Then every $\Qcal$-NSM is in fact a $\widetilde\Qcal$-NSM. Thus a test based on a $\Qcal$-NSM is powerless against $\widetilde\Qcal \setminus \Qcal$.
\end{theorem}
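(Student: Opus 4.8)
The plan is to bootstrap from Lemma~\ref{L_fconv_comb_supmg} in two stages: first extend the supermartingale property from pairwise fork-convex combinations to the entire fork-convex hull, and then pass to the $L^1(\Rtt)$-closure. Let $(M_t)$ be a $\Qcal$-NSM, so that $(M_t)$ is a supermartingale under every $\Qtt \in \Qcal$. Recall from the definition that the fork-convex hull of $\Qcal$ consists of all \emph{finite} fork-convex combinations of elements of $\Qcal$, each obtained by iterating the operation \eqref{eq_fconv_comb} a finite number of times. Lemma~\ref{L_fconv_comb_supmg} handles a single application of \eqref{eq_fconv_comb} to two laws under which $(M_t)$ is a supermartingale. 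So the first step is a straightforward induction on the number of iterations: if $(M_t)$ is a supermartingale under $\Qtt$ and under $\Qtt'$, it is a supermartingale under their fork-convex combination $\Qtt''$ by the lemma; feeding such combinations back into the construction preserves the property at each step, and after finitely many steps we conclude that $(M_t)$ is a supermartingale under every law in the (un-closed) fork-convex hull.

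The second step is to pass to the closure. Fix a law $\Qtt^\infty$ in the closed fork-convex hull, and let $(\Qtt^{(n)})$ be a sequence in the fork-convex hull whose likelihood ratio processes $(Z^{(n)}_t)$ converge to the likelihood ratio process $(Z^\infty_t)$ of $\Qtt^\infty$ in $L^1(\Rtt)$ at each fixed $t$. By the first step, for every $n$ and every $s \le t$ we have the $\Rtt$-supermartingale inequality $\EE_\Rtt[Z^{(n)}_t M_t \mid \Fcal_s] \le Z^{(n)}_s M_s$, which is equivalent to saying $(M_t)$ is a $\Qtt^{(n)}$-supermartingale. The goal is to take $n \to \infty$ and recover $\EE_\Rtt[Z^\infty_t M_t \mid \Fcal_s] \le Z^\infty_s M_s$. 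Testing against an arbitrary bounded nonnegative $\Fcal_s$-measurable $A$, it suffices to show $\EE_\Rtt[A Z^{(n)}_t M_t] \to \EE_\Rtt[A Z^\infty_t M_t]$ and likewise at time $s$; since $L^1(\Rtt)$ convergence of $(Z^{(n)}_t)$ gives convergence of these integrals once we can control $M_t$, the inequality survives in the limit.

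The main obstacle, and the step needing genuine care, is exactly this limit: $M_t$ need not be bounded, so $L^1(\Rtt)$-convergence of $Z^{(n)}_t$ alone does not immediately yield convergence of $\EE_\Rtt[A Z^{(n)}_t M_t]$. The clean way around this is to exploit nonnegativity of $M_t$ together with Fatou's lemma, which only requires the $L^1$-convergence already assumed (along a subsequence one even has a.s.\ convergence of $Z^{(n)}_t$). Applying Fatou to the nonnegative integrands $A Z^{(n)}_t M_t$ gives $\EE_\Rtt[A Z^\infty_t M_t] \le \liminf_n \EE_\Rtt[A Z^{(n)}_t M_t]$, while on the right-hand side at time $s$ one needs the reverse direction. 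Here I would use that the telescoping/martingale structure lets one replace the troublesome term: since $(Z^{(n)}_t M_t)$ is a nonnegative $\Rtt$-supermartingale for each $n$, the relevant expectations are monotone in $t$, and combining Fatou at time $t$ with the supermartingale inequality at each finite stage pins down the limiting inequality without ever needing uniform integrability of the unbounded factor $M_t$. Once the $\Rtt$-supermartingale inequality for $(Z^\infty_t M_t)$ is established, it is by definition equivalent to $(M_t)$ being a $\widetilde\Qcal$-supermartingale, and since $M_0 = 1$ and $M_t \ge 0$ are preserved, $(M_t)$ is a $\widetilde\Qcal$-NSM. The powerlessness claim then follows because a $\widetilde\Qcal$-NSM is a nonnegative supermartingale under every $\Ptt \in \widetilde\Qcal \setminus \Qcal$, hence converges $\Ptt$-a.s.\ and cannot have $\limsup_t M_t = \infty$, so the associated test \eqref{eq:stopping-safe-to-seq} fails to reject with probability one under such $\Ptt$.
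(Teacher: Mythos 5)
Your first step (induction over finitely many applications of \eqref{eq_fconv_comb} via Lemma~\ref{L_fconv_comb_supmg}) is exactly right and matches the paper. The gap is in your closure step. You correctly identify the obstacle --- $M_s$ is unbounded, so $L^1(\Rtt)$-convergence of $Z^{(n)}_s$ does not control $\EE_\Rtt[A Z^{(n)}_s M_s]$ --- but your proposed resolution does not close it. Concretely, after Fatou at time $t$ and the supermartingale inequality for each $n$, you have
\[
\EE_\Rtt[A Z^\infty_t M_t] \;\le\; \liminf_{n\to\infty} \EE_\Rtt[A Z^{(n)}_t M_t] \;\le\; \liminf_{n\to\infty} \EE_\Rtt[A Z^{(n)}_s M_s],
\]
and you still need the \emph{reverse}-Fatou inequality $\liminf_{n\to\infty} \EE_\Rtt[A Z^{(n)}_s M_s] \le \EE_\Rtt[A Z^\infty_s M_s]$. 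This can genuinely fail: a portion of $Z^{(n)}_s$ that vanishes in $L^1(\Rtt)$ can sit exactly where $M_s$ is enormous, keeping $\EE_\Rtt[A Z^{(n)}_s M_s]$ bounded away above (even diverging from) $\EE_\Rtt[A Z^\infty_s M_s]$. Nonnegativity and monotonicity-in-$t$ of supermartingale expectations cannot rule this out, because the troublesome term lives at the \emph{fixed} time $s$, not in the tail; ``telescoping/martingale structure'' is not a substitute for the uniform integrability of $(Z^{(n)}_s M_s)_{n}$ that this step would require, and that is precisely the hypothesis you do not have.

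The fix, which is the paper's route, is to never integrate the right-hand side: work with the \emph{conditional} supermartingale inequality. Pass to a subsequence (diagonalizing over the countably many times) along which $Z^{(n)}_t \to Z^\infty_t$ $\Rtt$-almost surely for every fixed $t$. Then the conditional Fatou lemma and the one-step inequality $\EE_\Rtt[Z^{(n)}_t M_t \mid \Fcal_{t-1}] \le Z^{(n)}_{t-1} M_{t-1}$ give
\[
\EE_\Rtt[Z^\infty_t M_t \mid \Fcal_{t-1}] \;\le\; \liminf_{n\to\infty} \EE_\Rtt[Z^{(n)}_t M_t \mid \Fcal_{t-1}] \;\le\; \liminf_{n\to\infty} Z^{(n)}_{t-1} M_{t-1} \;=\; Z^\infty_{t-1} M_{t-1},
\]
where the last equality uses only the almost-sure convergence at time $t-1$: the upper bound is a random variable, not an expectation, so no reverse Fatou and no uniform integrability is needed. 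This yields the $\Rtt$-supermartingale property of $(Z^\infty_t M_t)$, i.e., that $(M_t)$ is a $\widetilde\Qcal$-NSM. A small separate correction: powerlessness does not mean the test ``fails to reject with probability one'' under $\Ptt \in \widetilde\Qcal \setminus \Qcal$; rather, Ville's inequality applied to the $\Ptt$-NSM gives $\Ptt(\sup_t M_t \ge 1/\alpha) \le \alpha$, i.e., power at most the level $\alpha$.
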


\begin{proof}
The fork-convex hull of $\Qcal$ consists of all finite fork-convex combinations of elements of $\Qcal$. Therefore, thanks to Lemma~\ref{L_fconv_comb_supmg}, every $\Qcal$-NSM remains an NSM under every law $\Qtt$ in the fork-convex hull of $\Qcal$. To extend this to the closure, pick any element $\Qtt \in \widetilde\Qcal$. Then there is a sequence $(\Qtt^n)$ in the fork-convex hull of $\Qcal$ such that $\Qtt^n \to \Qtt$. This means that $Z^n_t \to Z_t$ in $L^1(\Rtt)$ for all $t \in \NN$, where $(Z^n_t)$ and $(Z_t)$ are the likelihood ratio processes of $\Qtt^n$ and $\Qtt$, respectively. By passing to a subsequence, we may assume that $Z^n_t \to Z_t$, $\Rtt$-almost surely, for all $t \in \NN$. Let $(L_t)$ be any $\Qcal$-NSM and hence a $\Qtt^n$-NSM for all $n$. Equivalently, $(Z^n_t L_t)$ is an $\Rtt$-NSM for all $n$. By the $\Rtt$-supermartingale property and the conditional version of Fatou's lemma, we get
\[
\EE_\Rtt[Z_t L_t \mid \Fcal_{t-1}] = \EE_\Rtt\left[\left.\lim_{n \to \infty} Z^n_t L_t \right| \Fcal_{t-1}\right] \le \liminf_{n \to \infty}  \EE_\Rtt[ Z^n_t L_t \mid \Fcal_{t-1}] \le \liminf_{n \to \infty}  Z^n_{t-1}L_{t-1} = Z_{t-1}L_{t-1}.
\]
This completes the proof that every $\Qcal$-NSM is in fact a $\widetilde\Qcal$-NSM.
\end{proof}

The first part of the above theorem asserts that the NSM property is preserved under taking closed fork-convex hulls, but note that this is not true for safe $e$-values in general. Indeed, $(E_t)$ being $\Qcal$-safe implies that it is $\text{conv}(\Qcal)$-safe, but not necessarily $\widetilde \Qcal$-safe.

\subsection{Composite Snell envelopes}

For a single law $\Qtt \in \Qcal$ and an e-process $(E_t)$, the $\Qtt$-Snell envelope is the smallest $\Qtt$-NSM that dominates $(E_t)$. It is natural to ask whether, in contrast to this pointwise construction, one can directly construct a ``composite $\Qcal$-Snell envelope'', i.e., a smallest $\Qcal$-NSM that dominates $(E_t)$. 

It turns out that the ability to define such a $\Qcal$-Snell envelope of an e-process depends heavily on the property of fork-convexity. The following result states that if the null set $\Qcal$ is locally dominated and fork-convex, then a $\Qcal$-Snell envelope of a given e-process $(E_t)$ exists, is safe, and improves upon $(E_t)$.

\begin{theorem}\label{thm:P-Snell}
Let $\Qcal$ be locally dominated and fork-convex. Let $(E_t)$ be a $\Qcal$-safe e-process. Then the process
\[
 L_t := \esssup_{\Qtt \in\Qcal,\, \tau\ge t}\EE_\Qtt[E_\tau\mid\Fcal_t], \quad t \in \NN_0,
\]
where $\tau$ ranges over all finite stopping times, is the smallest $\Qcal$-NSM that dominates $(E_t)$ and satisfies $L_0\le1$. Hence, $(L_t)$ is the $\Qcal$-Snell envelope of $(E_t)$. 
In particular, by the optional stopping theorem, $(L_t)$ is a $\Qcal$-safe e-process. 
\end{theorem}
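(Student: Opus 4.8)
The plan is to verify the defining properties of the $\Qcal$-Snell envelope one at a time: (i) $(L_t)$ is adapted, finite, and dominates $(E_t)$ with $L_0 \le 1$; (ii) $(L_t)$ is a $\Qcal$-NSM; (iii) $(L_t)$ is the \emph{smallest} such NSM; and (iv) the safety claim, which will follow immediately from (ii) via optional stopping. Throughout I would work under the fixed reference measure $\Rtt$ that locally dominates $\Qcal$, translating supermartingale statements into $\Rtt$-supermartingale statements for the likelihood-ratio--weighted processes when convenient, as in the proof of Lemma~\ref{L_fconv_comb_supmg}.

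For (i), domination is immediate by taking $\tau = t$ in the essential supremum, which gives $L_t \ge \EE_\Qtt[E_t \mid \Fcal_t] = E_t$ for every $\Qtt$; and $L_0 = \sup_{\Qtt,\tau} \EE_\Qtt[E_\tau] \le 1$ is exactly the safety property~\eqref{eq:e-process} of the e-process. The essential supremum over the family $\{\EE_\Qtt[E_\tau \mid \Fcal_t] : \Qtt \in \Qcal, \tau \ge t\}$ is well-defined; here I would invoke the standard lattice property of essential suprema, namely that the family is \emph{upward directed}, so that the essential supremum is attained as an increasing limit of a countable subfamily. This directedness is where fork-convexity enters: given two candidate pairs $(\Qtt_1,\tau_1)$ and $(\Qtt_2,\tau_2)$, I would construct a single pair whose conditional expectation dominates the pointwise maximum of the two, by pasting $\Qtt_1$ and $\Qtt_2$ together on the event where one conditional expectation exceeds the other using an $\Fcal_t$-measurable switch $h$. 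Because $\Qcal$ is fork-convex, the pasted law remains in $\Qcal$, and the corresponding stopping time (a measurable selection of $\tau_1$ or $\tau_2$) is again a valid finite stopping time $\ge t$. I expect this directedness/pasting argument to be the main obstacle, since it requires care in matching the fork-convex combination formula~\eqref{eq_fconv_comb} to the conditioning time $t$ and in checking measurability of the selection.

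For (ii), the supermartingale inequality $\EE_\Qtt[L_{t+1} \mid \Fcal_t] \le L_t$ for each $\Qtt \in \Qcal$ follows from the dynamic-programming (tower) structure of the essential supremum: any pair $(\Qtt',\tau')$ with $\tau' \ge t+1$ used at time $t+1$ can be combined with $\Qtt$ on $[t,t+1]$ via a fork-convex combination to yield an admissible pair at time $t$, and conversely the family at time $t$ contains $\tau \equiv t$; the upward-directedness from step (i) lets me exchange $\EE_\Qtt[\cdot \mid \Fcal_t]$ with the essential supremum using monotone convergence. Nonnegativity is inherited from $E_t \ge 0$, and the normalization $L_0 \le 1$ was already checked, so with $L$ truncated/normalized appropriately $(L_t)$ is a $\Qcal$-NSM in the sense defined in the paper. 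For (iii), if $(N_t)$ is any $\Qcal$-NSM dominating $(E_t)$, then for every $\Qtt$ and every finite $\tau \ge t$ the optional stopping theorem gives $\EE_\Qtt[E_\tau \mid \Fcal_t] \le \EE_\Qtt[N_\tau \mid \Fcal_t] \le N_t$; taking the essential supremum over $(\Qtt,\tau)$ yields $L_t \le N_t$, establishing minimality. Finally (iv) is immediate: since $(L_t)$ is a $\Qcal$-NSM with $L_0 \le 1$, the optional stopping result quoted in the paper gives $\sup_{\Qtt \in \Qcal}\sup_\tau \EE_\Qtt[L_\tau] \le 1$, so $(L_t)$ is a $\Qcal$-safe e-process, and it dominates $(E_t)$ as required.
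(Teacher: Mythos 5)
Your proposal is correct and follows essentially the same route as the paper's own proof: closure under maxima (upward directedness) of the family $\{\EE_\Qtt[E_\tau\mid\Fcal_t]\}$ via a fork-convex pasting on the event where one conditional expectation dominates the other, an increasing-sequence plus conditional monotone convergence argument for the supermartingale inequality using a second fork-convex pasting of $\Qtt$ before the conditioning time with the near-optimal law afterwards, and minimality and safety via optional stopping. The one detail you defer but the paper must (and does) handle explicitly is the null-set constraint $h=1$ on $\{Z'_s=0\}$ in \eqref{eq_fconv_comb}: the paper fixes versions of the conditional expectations that vanish on $\{Z_s=0\}$, and in the supermartingale step replaces $\Qtt_n$ by $(1-n^{-1})\Qtt_n+n^{-1}\Qtt$ so that $\Qtt$ is locally absolutely continuous with respect to $\Qtt_n$, which is what makes the $h=0$ pasting legitimate.
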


In this theorem, $L_t$ is defined as the \emph{essential supremum} of the family of random variables $\EE_\Qtt[E_\tau\mid\Fcal_t]$ indexed by $\tau$ and $\Qtt$. This means that $L_t$ is the smallest random variable that almost surely dominates this family. The details of this definition are reviewed in Appendix~\ref{sec:technical_appendix}, along with some key properties. In particular, the proof of the theorem in Appendix~\ref{A:proofs} will make use of Proposition~\ref{P_esssup_closed_max}.

\begin{remark}
We caution the reader that `essential supremum' is used outside of this work with a different meaning: the essential supremum of a (single) random variable $X$ is defined as the smallest constant $c$ such that $X \le c$ almost surely. This notion is different from the one used here, and does not arise in this paper.
\end{remark}

The process $(L_t)$ above is what we call the $\Qcal$-Snell envelope. 
Note that the $\Qcal$-Snell envelope of $(L_t)$ is almost surely equal to $(L_t)$ itself.  In short, the above theorem claims that if $\Qcal$ is fork-convex, then the $\Qcal$-Snell envelope of any $\Qcal$-safe e-process exists and is safe.

To construct a powerful and valid test that dominates a safe e-process $(E_t)$, one might be inherently interested in the \emph{largest} $\Qcal$-NSM $(\overline L_t)$ that dominates $(E_t)$ and satisfies $\overline L_0 \leq 1$. However, we are not aware of a systematic way to obtain such a process. Nevertheless, even the smallest $\Qcal$-NSM that dominates $(E_t)$, namely the $\Qcal$-Snell envelope, still tends to improve its power.

For a given $\Qcal$, can there be more than one process that is considered a $\Qcal$-Snell envelope (of some other process), and amongst these, is there a largest one? In general, the answer is yes for the first question and (typically) no for the second.
 Every $\Qcal$-NSM is its own Snell envelope and there always exist uncountably many $\Qcal$-NSMs, namely the constant and nonnegative decreasing processes starting at one. In particular, the constant process is also a $\Qcal$-NM albeit a powerless one. 
 In fact, there may be uncountably many $\Qcal$-NSMs, with none of these processes dominating the others, and at the same time there may not exist any non-constant $\Qcal$-NMs (that don't use independent external randomization, which involves expanding the filtration). For this paper's choice of $\Qcal$, we later show that every $\Qcal$-NSM is almost surely nonincreasing, and hence the constant process equaling one dominates all $\Qcal$-NSMs, and indeed the only $\Qcal$-NM almost surely equals one.

Taken together, Theorems~\ref{thm:fork-safe} and \ref{thm:P-Snell} lead to the following corollary, which tells us that in certain situations one has to move beyond composite NSMs to achieve powerful tests. We continue to let $\Qcal$ be any locally dominated null set and $\widetilde \Qcal$ its closed fork-convex hull.

\begin{corollary}\label{cor:safe-fork-closure}
Let $(E_t)$ be a $\Qcal$-safe e-process. Then $(E_t)$ is dominated by (or equals) some $\Qcal$-NSM $(L_t)$ with $L_0\le1$ if and only if $(E_t)$ already happens to be $\widetilde\Qcal$-safe (and therefore powerless against $\widetilde\Qcal \setminus \Qcal$).
\end{corollary}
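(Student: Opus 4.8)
The plan is to establish the two implications separately, leaning on Theorem~\ref{thm:fork-safe} for the ``only if'' direction and on Theorem~\ref{thm:P-Snell}, applied to $\widetilde\Qcal$ rather than to $\Qcal$ itself, for the ``if'' direction. The reason $\widetilde\Qcal$ enters is that $\Qcal$ need not be fork-convex, so Theorem~\ref{thm:P-Snell} cannot be invoked directly on $\Qcal$; its closed fork-convex hull, however, is the natural fork-convex enlargement on which the Snell-envelope machinery runs.

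For the ``only if'' direction, suppose $E_t \le L_t$ for all $t$ for some $\Qcal$-NSM $(L_t)$ with $L_0\le1$. By Theorem~\ref{thm:fork-safe}, $(L_t)$ is automatically a $\widetilde\Qcal$-NSM. Optional stopping then gives $\EE_\Qtt[L_\tau]\le L_0\le1$ for every $\Qtt\in\widetilde\Qcal$ and every (possibly infinite) stopping time $\tau$, where at infinite times we use that $L_\infty=\lim_t L_t$ exists. Since $E_\tau\le L_\tau$ pathwise at finite times, and $E_\infty=\limsup_t E_t\le\lim_t L_t=L_\infty$ at infinite times, monotonicity of expectation yields $\sup_{\Qtt\in\widetilde\Qcal}\sup_\tau\EE_\Qtt[E_\tau]\le1$; that is, $(E_t)$ is $\widetilde\Qcal$-safe, whence powerless against $\widetilde\Qcal\setminus\Qcal$.

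For the ``if'' direction, assume $(E_t)$ is $\widetilde\Qcal$-safe. The set $\widetilde\Qcal$ is locally dominated, inheriting the reference measure $\Rtt$ from $\Qcal$ because every fork-convex combination and every $L^1(\Rtt)$-limit of such combinations still possesses a likelihood ratio process relative to $\Rtt$; and it is fork-convex. Hence Theorem~\ref{thm:P-Snell} applies with $\widetilde\Qcal$ in the role of the null set: the $\widetilde\Qcal$-Snell envelope
\[
L_t := \esssup_{\Qtt \in \widetilde\Qcal,\, \tau\ge t}\EE_\Qtt[E_\tau\mid\Fcal_t]
\]
exists and is the smallest $\widetilde\Qcal$-NSM dominating $(E_t)$ with $L_0\le1$. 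Because $\Qcal\subseteq\widetilde\Qcal$, this $(L_t)$ is in particular a $\Qcal$-NSM with $L_0\le1$ dominating $(E_t)$, which is exactly the conclusion required.

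The main obstacle is verifying the hypothesis of Theorem~\ref{thm:P-Snell} that $\widetilde\Qcal$ is fork-convex. The fork-convex hull (before closure) is fork-convex because an arbitrary intersection of fork-convex sets is fork-convex, so the issue is whether the $L^1(\Rtt)$-closure preserves fork-convexity. I would argue that a fork-convex combination of two limits $\Qtt,\Qtt'\in\widetilde\Qcal$, specified by some $s\in\NN_0$ and $\Fcal_s$-measurable $h\in[0,1]$, is realized as the limit of the fork-convex combinations of approximating sequences $\Qtt^n\to\Qtt$ and $\Qtt'^n\to\Qtt'$ drawn from the (unclosed) hull. The only delicate point is honoring the constraint $h=1$ on $\{Z'_s=0\}$ uniformly along the approximants; this is handled by a passage to the limit in the spirit of the conditional-Fatou estimate used in the proof of Theorem~\ref{thm:fork-safe}. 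Granting this, the remaining steps are routine applications of optional stopping and the monotonicity of expectation.
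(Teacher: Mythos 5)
Your proof is correct and follows essentially the same route as the paper: the forward implication via Theorem~\ref{thm:fork-safe} together with domination and optional stopping, and the reverse implication by applying Theorem~\ref{thm:P-Snell} with $\widetilde\Qcal$ in place of $\Qcal$. Your extra step of verifying that $\widetilde\Qcal$ is locally dominated and fork-convex (a hypothesis the paper invokes implicitly when substituting $\widetilde\Qcal$ into Theorem~\ref{thm:P-Snell}), as well as your explicit handling of infinite stopping times, only adds rigor to the same argument.
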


\begin{proof}
   To prove the forward implication, assume $(E_t)$ is dominated by some $\Qcal$-NSM $(L_t)$ with $L_0\le1$. By Theorem~\ref{thm:fork-safe}, $(L_t)$ is in fact a $\widetilde\Qcal$-NSM. It follows that $(E_t)$ is $\widetilde\Qcal$-safe as claimed, because we have $\EE_\Ptt[E_\tau]\le \EE_\Ptt[L_\tau] \le L_0 \le 1$ for every $\Ptt\in\widetilde\Qcal$ and each finite stopping time $\tau$.
    
    To prove the reverse implication, assume that $(E_t)$ is actually $\widetilde\Qcal$-safe. An application of Theorem~\ref{thm:P-Snell} (with $\Qcal$ replaced by $\widetilde\Qcal$) then gives a $\widetilde\Qcal$-NSM $(L_t)$ with $L_0\le1$ that dominates $(E_t)$. This completes the proof of the corollary.
\end{proof}

The above result suggests that we \emph{must} look beyond NSMs for designing sequential tests for exchangeability, and we next show that this fact holds regardless of the class of alternatives considered.

\subsection{The inadequacy of NSMs for testing exchangeability}

We now return to the main focus of this paper, which is binary sequences; thus $\Xcal = \{0,1\}$. In this case, any law $\Ptt$ is locally dominated by the i.i.d.\ Bernoulli(1/2) law $\Rtt := \mathrm{Ber}(1/2)^\infty$ and the likelihood ratio process of $\Ptt$ is
\begin{equation}\label{eq_binary_density_arbitrary}
Z_t = 2^t \prod_{i = 1}^t D_i, \quad \text{where} \quad D_i := q_i(X_1,\ldots,X_{i-1}) \1_{\{X_i=1\}} + (1-q_i(X_1,\ldots,X_{i-1})) \1_{\{X_i=0\}}
\end{equation}
for some functions $q_t \colon \{0,1\}^{t-1} \to [0,1]$ such that, $\Rtt$-almost surely,
\[
q_t(X_1,\ldots,X_{t-1}) = \Qtt(X_t = 1 \mid X_1,\ldots,X_{t-1}).
\]
In particular, taking $q_t = p \in (0,1)$ for all $t$ gives the likelihood ratio process of $\mathrm{Ber}(p)^\infty$ with respect to $\Rtt$. The following theorem shows that any likelihood ratio process of the form \eqref{eq_binary_density_arbitrary} can be approximated by a finite fork-convex combination of likelihood ratio processes corresponding to i.i.d.\ Bernoulli laws. The closed fork-convex hull of this set is therefore very large: it contains \emph{all} laws over binary sequences.

\begin{theorem}\label{thm:every-law-fork-convex}
Every law $\Ptt$ over the space of binary sequences belongs to the closed fork-convex hull of $\Qcal = \{\mathrm{Ber}(p)^\infty \colon p \in (0,1)\}$. Thus, the process $(M_t)$ with $M_t=1$ for all $t$ is the only $\Qcal$-NM, and every $\Qcal$-NSM must be nonincreasing, and hence these never exceed one and always have zero power for any $\alpha \in (0,1)$.
\end{theorem}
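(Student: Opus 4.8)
The plan is to prove the two assertions in turn. For the first — that every law lies in the closed fork-convex hull of $\Qcal$ — I would show that for every target law $\Ptt$ and every horizon $T$ there is a \emph{finite} fork-convex combination of elements of $\Qcal$ whose likelihood ratio process agrees with that of $\Ptt$ on $\Fcal_T$. Since the closed hull only requires $L^1(\Rtt)$ convergence of likelihood ratios at each fixed time, and the matching combination $\Ptt^{(T)}$ satisfies $Z^{(T)}_t = Z_t$ for all $t \le T$, letting $T \to \infty$ places $\Ptt$ in the closed hull. Writing the conditionals of $\Ptt$ as $q_t(x_{1:t-1}) = \Ptt(X_t = 1 \mid X_{1:t-1} = x_{1:t-1})$ as in \eqref{eq_binary_density_arbitrary}, the task at horizon $T$ is to realize the full depth-$T$ tree of conditionals $\{q_k\}_{k \le T}$.

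The construction builds this tree shallow-to-deep, installing one conditional at a time. Starting from $\mathrm{Ber}(q_1)^\infty \in \Qcal$, I would process the internal nodes $x_{1:k}$ (for $0 \le k < T$) in order of increasing depth; to set the step-$(k+1)$ conditional at a \emph{single} node $x_{1:k}$ equal to $q_{k+1}(x_{1:k})$, fork the current law with a fresh $\mathrm{Ber}(r)^\infty$ at time $s = k$, using an $\Fcal_k$-measurable weight $h$ that equals $1$ off the target node and equals some $h^\ast \in [0,1]$ on it. Because $h=1$ elsewhere, such a fork coincides with the current law on all of $\Fcal_k$ and on every sibling history, so it disturbs neither the shallower conditionals already installed nor the conditionals at other nodes of the same depth; on the target node the step-$(k+1)$ conditional becomes $h^\ast v + (1-h^\ast) r$, where $v \in (0,1)$ is the current value there. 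Choosing $r \in (0,1)$ on the far side of $q_{k+1}(x_{1:k})$ from $v$ makes $h^\ast \in [0,1]$ solvable, and since forking at a later time $s = k+1$ again follows the running law on $\Fcal_{k+1}$, all previously installed conditionals are preserved. After finitely many (of order $2^T$) such local forks the law matches $q_{1:T}$, giving the desired finite fork-convex combination.

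I expect the \textbf{main obstacle} to be the phenomenon flagged in the Remark following de Finetti's theorem: mixing i.i.d.\ laws with different parameters necessarily \emph{correlates} the tail, so one cannot hope to keep a clean product tail while editing conditionals. I would sidestep this entirely by only ever matching finitely many coordinates — the tail of each $\Ptt^{(T)}$ is an uncontrolled, correlated object, but it is irrelevant — and by using node-local forks so that each edit is confined to one history. The one remaining wrinkle is that this scheme keeps all conditionals strictly inside $(0,1)$, whereas a general $\Ptt$ may have $q_t \in \{0,1\}$; this is absorbed by the closure, since laws with interior conditionals are dense in fixed-time $L^1(\Rtt)$ among all laws (clamp $q_t$ into $[1/m, 1-1/m]$ and send $m \to \infty$), so $\Ptt$ still lies in the closed hull.

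For the second assertion I would combine the first part with Theorem~\ref{thm:fork-safe}. Any $\Qcal$-NSM is, by that theorem together with part one, an NSM — hence a supermartingale — under \emph{every} law over binary sequences, in particular under each point mass $\delta_x$ on a fixed sequence $x$ (which is locally dominated by $\Rtt$ and lies in the closed hull by part one). Evaluating the supermartingale inequality under $\delta_x$, where conditioning is trivial, gives $M_t(x_{1:t}) \le M_{t-1}(x_{1:t-1})$; as this holds for every $x$, the process is nonincreasing along every path, and with $M_0 = 1$ this yields $M_t \le 1$ surely. Consequently the test $\kappa_\alpha$ never fires for $\alpha \in (0,1)$, so the power is zero. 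Finally, if $(M_t)$ is a $\Qcal$-NM it is in particular such a nonincreasing NSM with $M_t \le 1$, while the martingale property under $\Rtt = \mathrm{Ber}(1/2)^\infty$ forces $\EE_\Rtt[M_t] = 1$; since $1 - M_t \ge 0$ then has zero $\Rtt$-expectation and $\Rtt$ charges every cylinder, $M_t \equiv 1$, so the constant process is the only $\Qcal$-NM.
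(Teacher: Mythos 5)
Your proposal is correct and follows essentially the same route as the paper's own proof: an induction over tree depth producing finite fork-convex combinations of nondegenerate Bernoulli laws whose likelihood ratio processes agree with that of $\Ptt$ up to any finite horizon, passage to the closed hull via the resulting (trivial) $L^1(\Rtt)$ convergence at each fixed time, and then the point-mass argument via Theorem~\ref{thm:fork-safe} to conclude that every $\Qcal$-NSM is nonincreasing and the only $\Qcal$-NM is constant. The differences are implementational rather than structural --- the paper installs an entire depth level in one step (mixing the $2^s$ laws $\mathrm{Ber}(q_{s+1}(y_k))^\infty$ with indicator weights and then forking with $h=0$) instead of your node-by-node forks with a scalar equation for $h^\ast$, and your explicit clamping of boundary conditionals $q_t \in \{0,1\}$ is a careful touch that the paper's proof glosses over, since $\mathrm{Ber}(q)^\infty \notin \Qcal$ when $q \in \{0,1\}$.
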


\begin{proof}
Fix an arbitrary law $\Ptt$ and let $(Z_t)$ be its likelihood ratio process as in \eqref{eq_binary_density_arbitrary}. We will show by induction that for each $s \in \NN_0$ there exists an element $\Qtt^{(s)}$ in the fork-convex hull of $\Qcal$ whose likelihood ratio process $(Z^{(s)}_t)$ satisfies $Z^{(s)}_t = Z_t$ for all $t \le s$. This is clearly true for $s=0$ since every likelihood ratio process is equal to one at time zero. Suppose it is true for some particular $s$. Consider a law $\Qtt'$ whose likelihood ratio process $(Z'_t)$ satisfies $Z'_{s+1} / Z'_s = 2 D_{s+1}$, where $D_{s+1}$ is defined in \eqref{eq_binary_density_arbitrary}. We can choose $\Qtt'$ from the fork-convex hull of $\Qcal$ by mixing the $2^s$ Bernoulli laws $\mathrm{Ber}(q_{s+1}(y_k))^\infty$ with weights $\1_{\{y_k\}}(X_1,\ldots,X_s)$ at time $s$, where $y_1,\ldots,y_{2^s}$ lists all binary strings of length $s$. Taking the fork-convex combination of $\Qtt^{(s)}$ and $\Qtt'$ at time $s$ with $h=0$ gives a law $\Qtt^{(s+1)}$ in the fork-convex hull of $\Qcal$ whose likelihood ratio process satisfies $Z^{(s+1)}_t = Z^{(s)}_t = Z_t$ for $t \le s$, and $Z^{(s+1)}_{s+1} = 2 Z^{(s)}_s D_{s+1} = Z_{s+1}$ for $t=s+1$. Thus $\Qtt^{(s+1)}$ satisfies the induction assumption with $s+1$ instead of $s$. 

Thus, by induction, for each $s \in \NN_0$ there exists an element $\Qtt^{(s)}$ in the fork-convex hull of $\Qcal$ whose likelihood ratio process $(Z^{(s)}_t)$ satisfies $Z^{(s)}_t = Z_t$ for all $t \le s$, as required. Now, it is clear that for each fixed $t \in \NN$, $Z^{(s)}_t \to Z_t$ in $L^1(\Rtt)$ as $s \to \infty$. This shows that $\Ptt$ belongs to the closed fork-convex hull of $\Qcal$ as claimed in the first part of the theorem.

It remains to prove that every $\Qcal$-NSM $(L_t)$ must be nonincreasing, which implies in particular that the only $\Qcal$-NM is the constant process equal to one. Fix any binary sequence $\omega_0$ and let $\Ptt$ be the law such that $\Ptt(\{\omega_0\}) = 1$. By the first part of the theorem, $\Ptt$ belongs to the closed fork-convex hull of $\Qcal$ and hence, by Theorem~\ref{thm:fork-safe}, $(L_t)$ is a $\Ptt$-NSM. Thus $L_t(\omega_0) = \EE_\Ptt[ L_t ] \le \EE_\Ptt[ L_{t-1} ] = L_{t-1}(\omega_0)$ for every $t \in \NN$. Since $\omega_0$ was arbitrary, this shows that $(L_t)$ must be nonincreasing.
\end{proof}

In other words, not only are $\Qcal$-NSMs inadequate against Markovian alternatives, they are incapable of detecting \emph{any} deviation from exchangeability. We include an overview of the relations we have shown in Figure~\ref{fig:nsm-safety}.

\begin{figure}
\centering
\includegraphics[width=0.25\textwidth]{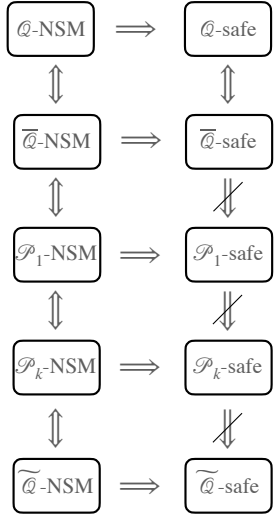}
\caption{A summary of some of the implications related to $\Qcal$-NSMs and $\Qcal$-safety (recall Figure~\ref{fig:nested} for the definitions of the classes $\Qcal$, $\overline \Qcal$, $\Pcal_1$, $\Pcal_k$, and $\widetilde \Qcal$). We would like to design a $\Qcal$-safe e-process that is powerful against $\Pcal_k$.  Theorem~\ref{thm:every-law-fork-convex} proves that a $\Qcal$-NSM is non-viable since it unintentionally results in $\Pcal_k$-safety and thus no power against $\Pcal_k$. The single non-implication sign above opens a door to constructing a non-NSM based $\Qcal$-safe e-process that is consistent against $\Pcal_k$. An example for such an e-process is precisely the construction in~\eqref{eq:solution}.}
\label{fig:nsm-safety}
\end{figure}

\section{Numerical simulations and some extensions}\label{sec:sims}

Recall that it is impossible to detect \emph{all} deviations from exchangeability. For example, if only the first bit of the data is a $\mathrm{Ber}(0.1)$ and the rest is $\mathrm{Ber}(0.2)$, there is simply not enough information available to reject the null. The informal reason is that the deviation from the null is temporary and fleeting, not sustained, and thus easily ascribed to chance. Motivated by this, the primary focus of the paper has been to detect Markovian alternatives. In this section, we will examine the power of our approach against three types of alternatives: (a) the one it was designed primarily for --- a first- (and low-) order Markov alternative, (b) a time-inhomogeneous Markov alternative, and (c) a completely different type of alternative that we were not intending to have power against: a change point alternative.

\begin{remark}
    The last setting should especially not be confused with change point detection, which has different goals, such as minimizing `detection delay' (informally, expected difference between the stopping time and the changepoint, when the procedure stops after the changepoint) subject to a prespecified lower bound on the `average run length' (expected stopping time when there is no changepoint). To be clearer, in order for change point detection algorithms to have a small detection delay, they must pay the price of stopping under the `null' as well, and one typically studies the tradeoffs between those two metrics. However, we will remain still in the realm of sequential testing, where we do not wish to ever stop if the null is true, and the metrics of Type I error and power are rather different from the above. Below, we will focus on measuring evidence using e-processes, where we wish to maximize wealth (or its rate of growth) under the alternative, subject to an upper bound on expected wealth under the null.
\end{remark}

We also reinterpret our test in terms of sequential estimation via confidence sequences in Section~\ref{sec:CS-equiv}, and Section~\ref{sec:calibration} demonstrates the use of calibrators to construct an e-process that approximately tracks the maximum wealth thus far.

\subsection{Sanity check 1: no power against an i.i.d.~Bernoulli sequence} \label{SS:sanity1}
We start with a sanity check, namely that our evidence measure does not report any evidence against Bernoulli sources (which are exchangeable). 
Theorem~\ref{T:4} predicts that when the null is true, $(R_t)$ should decay to zero almost surely, meaning that $(\ln R_t)$ decays to minus infinity. Further, Jensen's inequality implies that at any stopping time $\tau$, $\ln R_\tau$ has nonpositive expectation; hence $\sup_{\Qtt \in \Qcal}  \sup_{\tau}  \EE_\Qtt [\ln R_\tau] \leq 0$, where the second supremum is over all stopping times $\tau$.
Figure~\ref{fig:bern} illustrates that indeed the evidence decays from the start, as expected. The lower bound from \eqref{eq:lbd} predicts that $\ln R_t$ could decay logarithmically in time (but not faster), meaning that $\ln R_t \gtrsim -\ln t$, and that seems particularly tight in the figure below.

\begin{figure}[ht!]
\centering
  \begin{subfigure}{0.45\textwidth}
    \includegraphics[width=\textwidth]{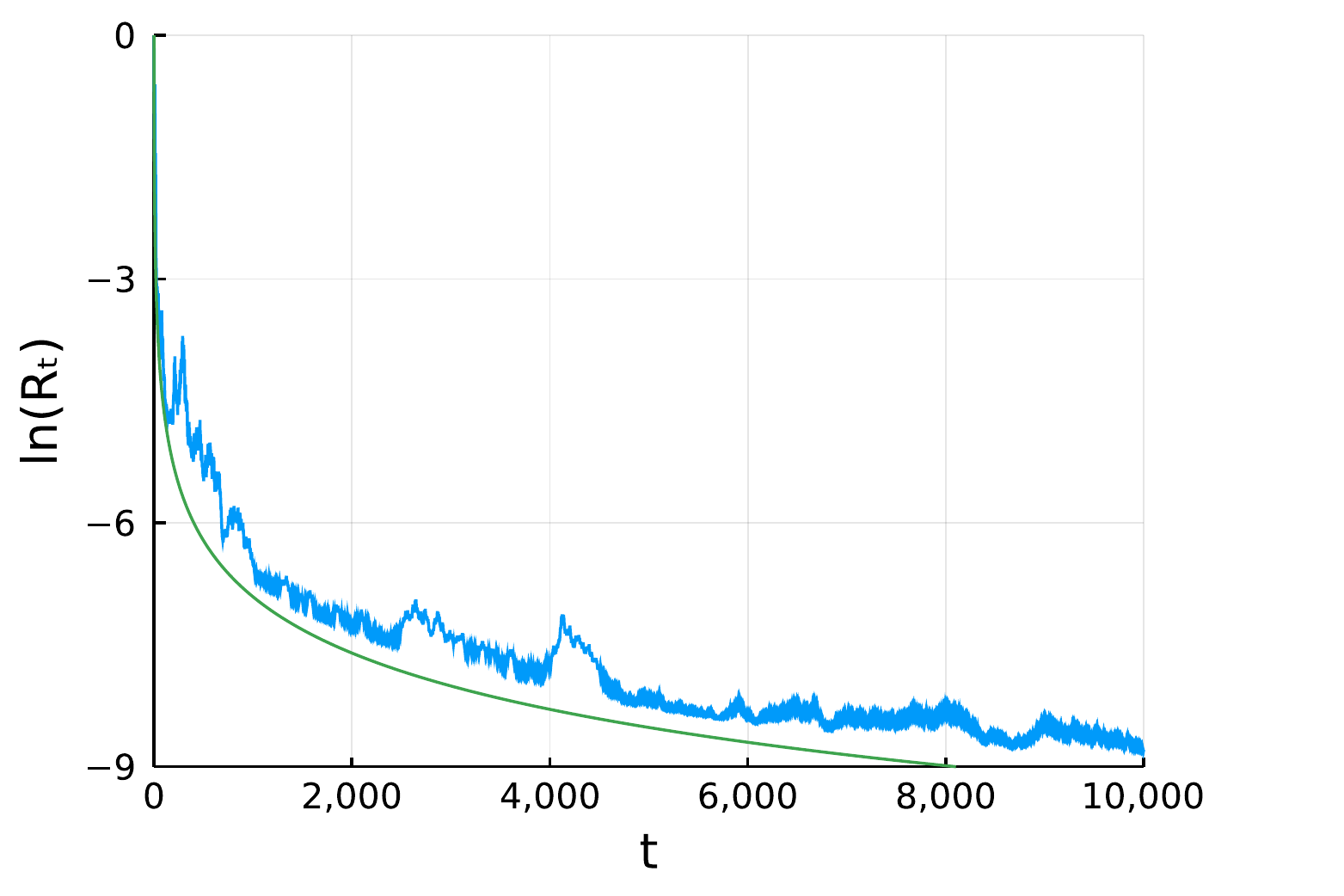}
    \caption{Bernoulli($0.5$)}
    \end{subfigure}
  \begin{subfigure}{0.45\textwidth}
    \includegraphics[width=\textwidth]{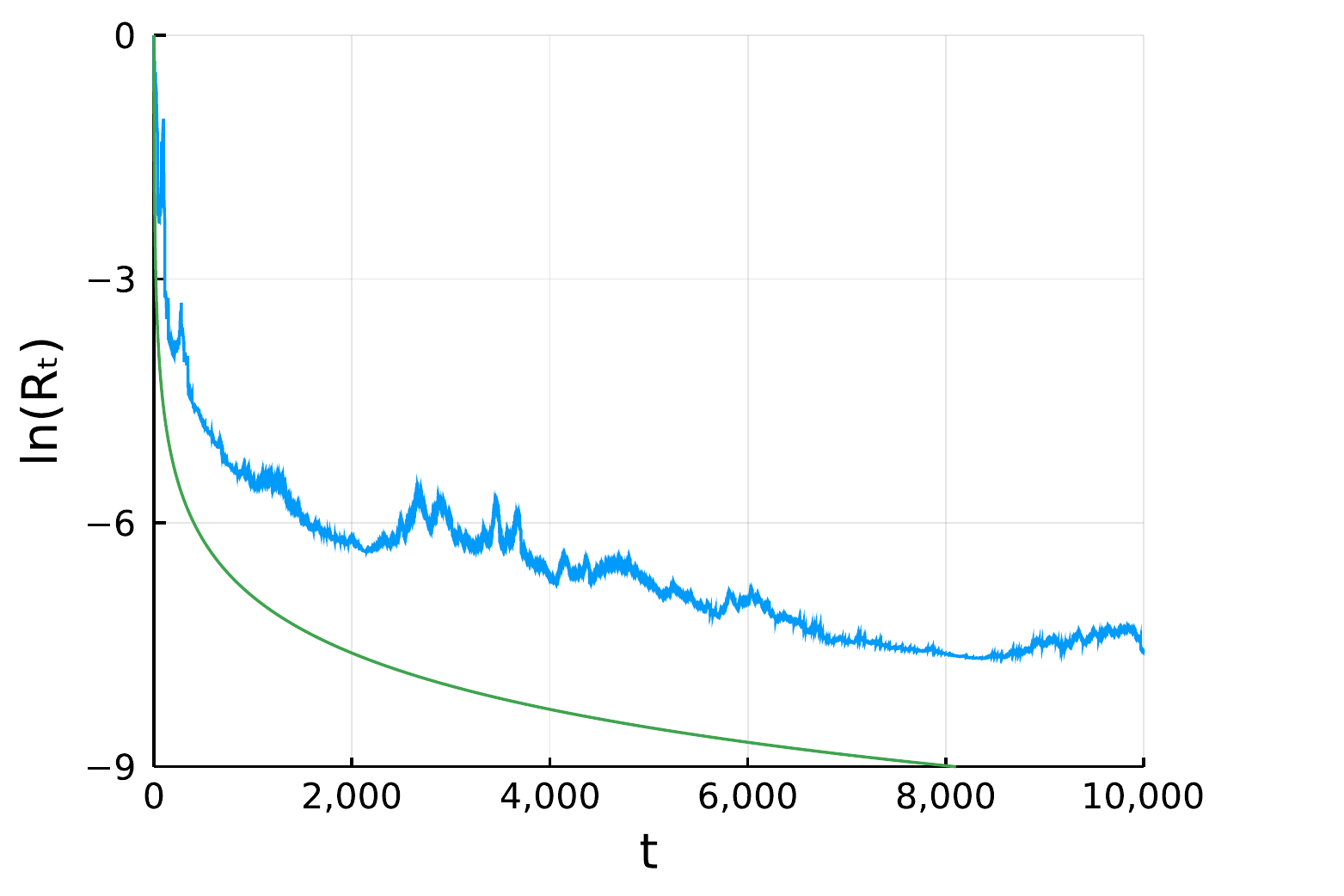}
    \caption{Bernoulli($0.2$)}
  \end{subfigure}
\caption{The blue line shows the evolution of $\ln R_t$ under two Bernoulli sources. The green line is the lower bound $-\ln(t)$ following from \eqref{eq:lbd}.
  The decay is consistent with Theorem~\ref{T:4}, which predicts that $\ln R_t \to - \infty$ almost surely.}\label{fig:bern}
\end{figure}

Having established that the predictions under the null are accurate, we now estimate the power against Markovian alternatives. We first consider a fixed Markov alternative, which the test targets. Then we consider a time-varying Markov alternative, which the test does not explicitly target.

\subsection{Sanity check 2: power against a first-order Markov alternative}
Here we evaluate the evidence measure \eqref{eq:solution} in the well-specified case where data come from a first-order Markov process. Figure~\ref{fig:markov} illustrates that the asymptotics given by Theorem~\ref{T:4} kick in early, as evidenced by the (log)-evidence growing linearly with the sample size.

\begin{figure}[ht!]
\centering
    \begin{subfigure}{0.45\textwidth}
    \includegraphics[width=\textwidth]{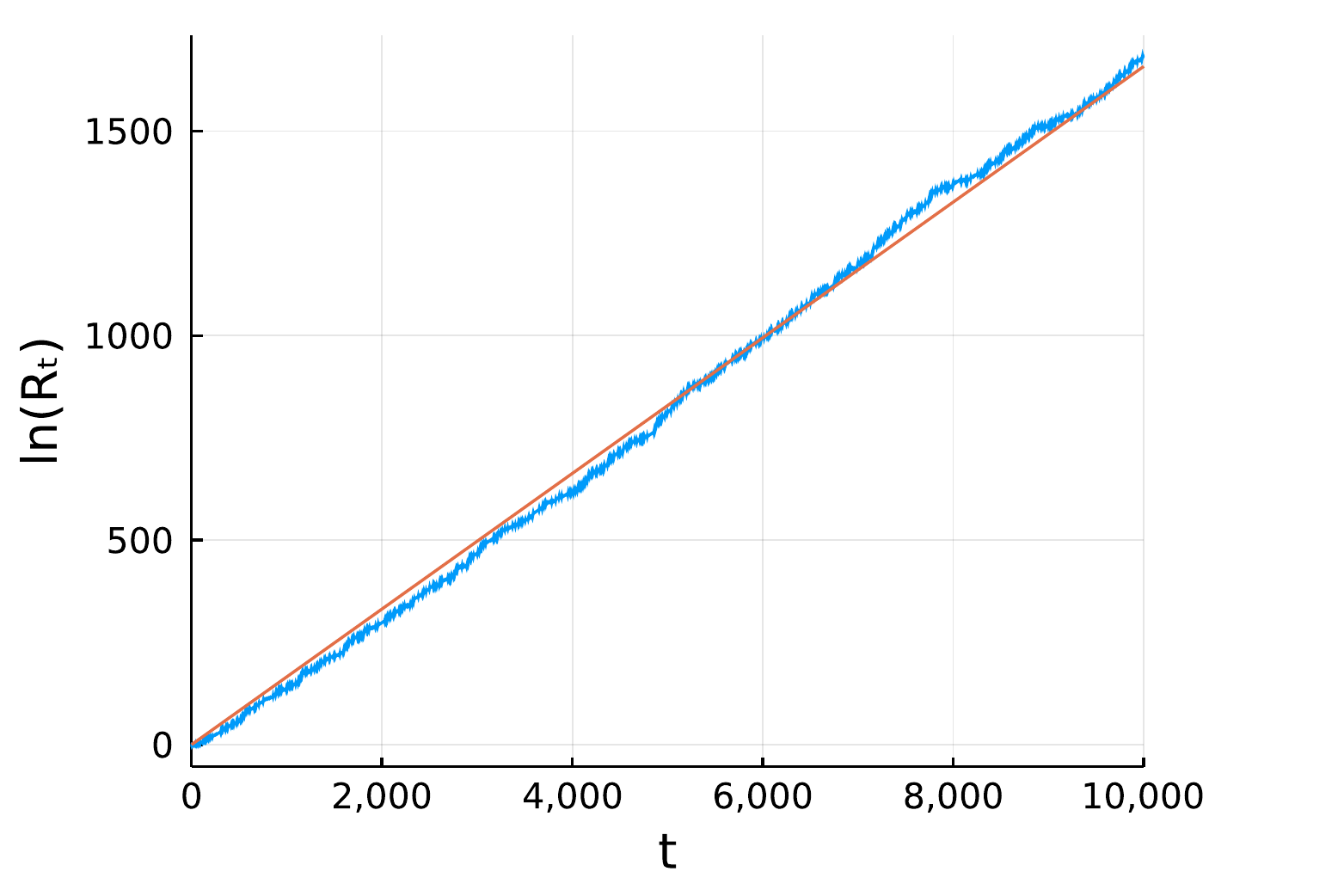}
    \caption{Markov($0.1$, $0.9$)}
  \end{subfigure}
  \begin{subfigure}{0.45\textwidth}
    \includegraphics[width=\textwidth]{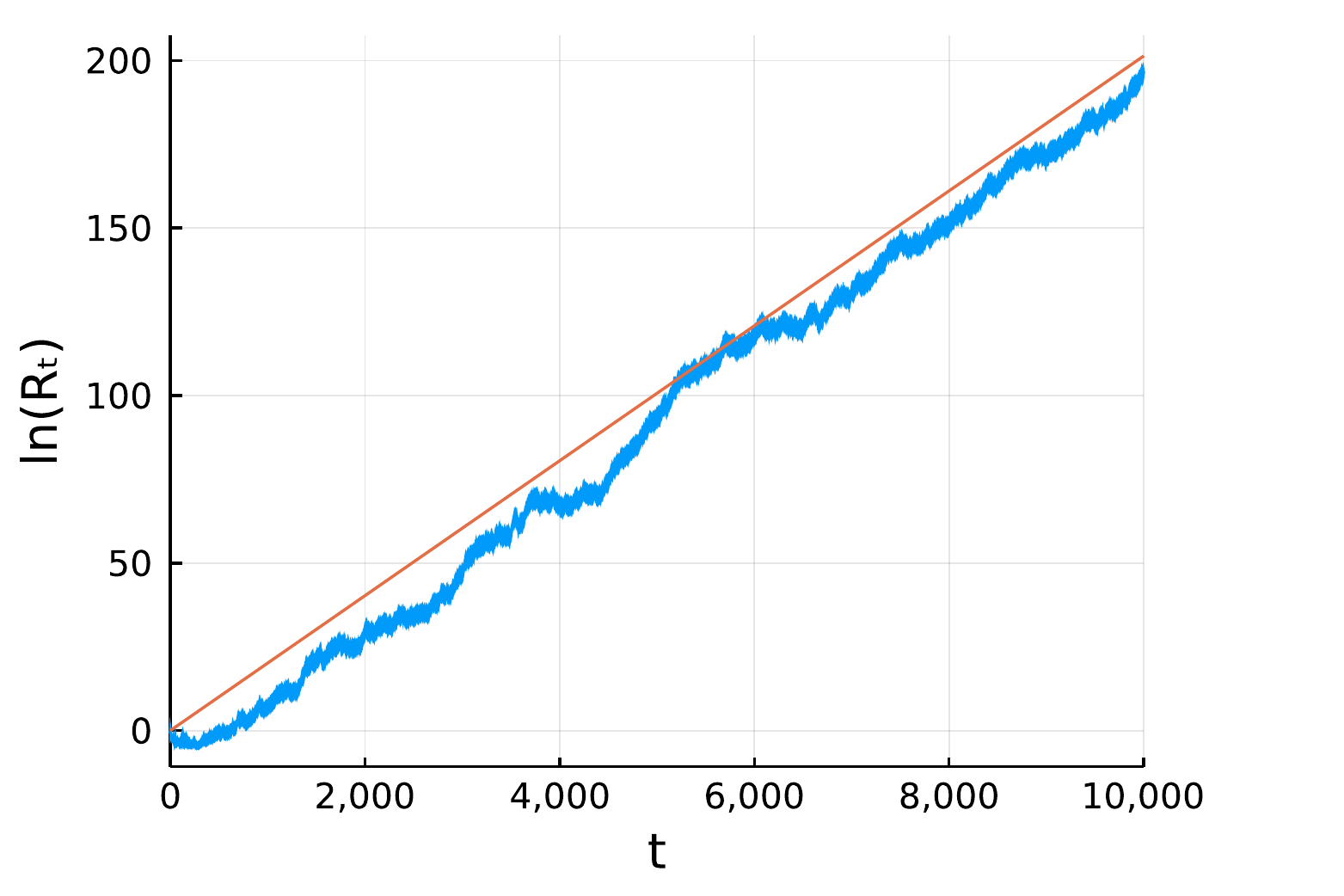}
    \caption{Markov($0.4$, $0.6$)}
  \end{subfigure}
  \caption{
    Blue: the evolution of $\ln R_t$, under a Markov process that is relatively far from Bernoulli (left) and one that is closer (right). Red: the linear approximation with slope $r^*$ from Theorem~\ref{T:4}.
}\label{fig:markov}
\end{figure}

\subsection{Power against a time-varying Markov alternative}\label{sec:sim-time-varying}

Recalling \eqref{eq:limits}, one might conjecture that if a sequence has equal zeroth and first-order frequency limits, then our test will be powerless. Here, we show that this is not necessarily the case.
Namely, we show that it is possible for a sequence to have frequency limits $\lim_{t \to \infty} n_1/t = \lim_{t\to\infty} n_{1|0}/n_0 = \lim_{t\to\infty} n_{1|1}/n_1$ (this happens almost surely under a Bernoulli source) while $(R_t)$ still diverges to infinity, meaning that we  reject the exchangeability null at any confidence level. We will consider a time-varying Markov source with symmetric transition probabilities $p_{0|0}(t) = p_{1|1}(t) = 1/2 + \delta_t$, for some decreasing sequence $\delta_t$ tending to $0$. To make the example interesting, we need two properties of the ``repetition bias'' $\delta_t$:
\begin{itemize}
\item
  We aim to pick $\delta_t$ small enough so that the above limit frequencies are ${1}/{2}$. In particular, this requires $\lim_{t \to \infty}  \sum_{s=1}^t \delta_s/t = 0$. For example, $\delta_t := {1}/{t^\alpha}$ for $\alpha >0$ would do for this purpose.
\item
  We aim to pick $\delta_t$ large enough so that $\ln R_t \to \infty$. To this end, we can use the Law of the Iterated Logarithm to conclude that after $t$ rounds $n_0$ and $n_1$ are both $t/2 \pm O(\sqrt{t \ln \ln t})$ while $n_{0|0}$ and $n_{1|1}$ are both $ t/4 +  \sum_{s=1}^t \delta_s/2 \pm O(\sqrt{t \ln \ln t})$. Let us write $\ell(t)$ for the likelihood ratio of the Markov maximum likelihood and the Bernoulli maximum likelihood, as we did in the proof of Theorem~\ref{T:4}. Abbreviating the binary entropy to $h(p) := -p \ln{p} -(1-p)\ln(1-p)$ and using that $h(1/2+\delta) = \ln 2 - 2 \delta^2 + O(\delta^3)$ reveals that
    \begin{equation}\label{eq:appxl}
    \ell(t)
    ~=~
      t \left(
      h\left(\frac{n_1}{t}\right)
      - \frac{n_0}{t} h\left(\frac{n_{0|0}}{n_0}\right)
      - \frac{n_1}{t} h\left(\frac{n_{1|1}}{n_1}\right)
    \right)
    ~\approx~
      \frac{2}{t} \left(
      \sum_{s=1}^t \delta_s \pm O(\sqrt{t \ln \ln t})
    \right)^2
    .
  \end{equation}
  To have the lower bound $\ln R_t \ge \ell(t) - \ln t - O(1)$ from \eqref{eq:lbd} diverge, we need to ensure
  \[
    2 \left(\frac{1}{\sqrt{t}} \sum_{s=1}^t \delta_s - O\left(\sqrt{\ln \ln t}\right) \right)^2
    - \ln t \to \infty.
  \]
  This reveals that the threshold for divergence is around $\sum_{s=1}^t \delta_s = \sqrt{t/(2 \ln t)}$ (this being forced by the $\ln t$ term on the right, not by the $\sqrt{\ln \ln t}$ term inside the square). To investigate the behaviour ever so slightly above this threshold, we ended up picking $\delta_t := F(t+1)- F(t)$ for $F(t) := \min \{t/2, \sqrt{t} \ln(1+t)\}$, where the $\min$ ensures that $\delta_t \le {1}/{2}$ from $t=1$ without affecting the order of growth. The results are displayed and discussed in Figure~\ref{fig:pincer}.
\end{itemize}

\begin{figure}[h!]
\centering
  \begin{subfigure}[t]{0.45\textwidth}
    \includegraphics[width=\textwidth]{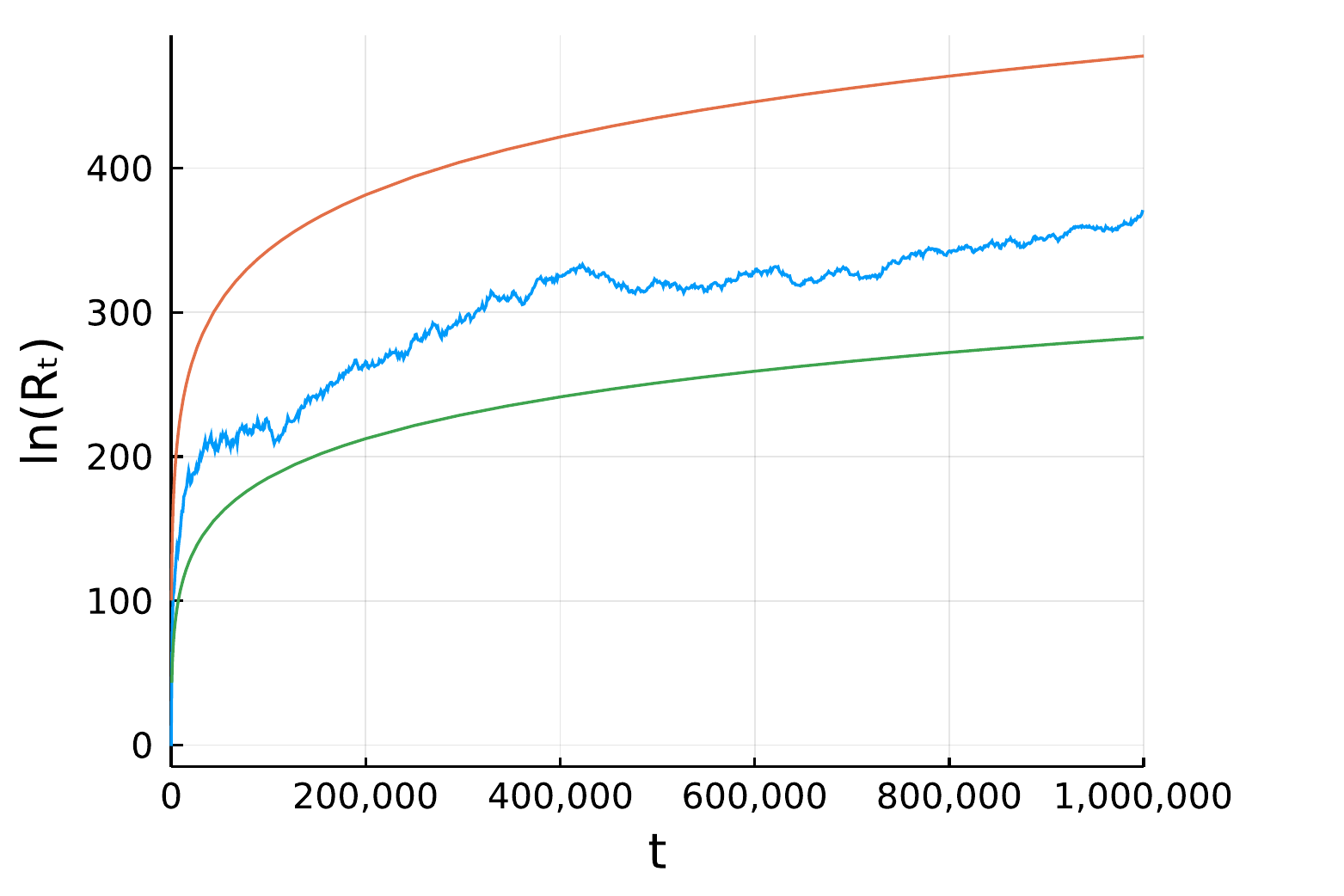}
    \caption{Markov(${1}/{2} -\delta_t$, ${1}/{2} + \delta_t$). This source is ``sticky'', in that it slightly favours repeating the previous outcome. The evidence against the null diverges.
      Red is the approximation \eqref{eq:appxl} with the constant in $O(\cdot)$ taken to be $1$, while green is \eqref{eq:appxl} with the constant in $O(\cdot)$ as $-1$ and $\ln(t)$ subtracted.}
  \end{subfigure}
  \quad
  \begin{subfigure}[t]{0.45\textwidth}
    \includegraphics[width=\textwidth]{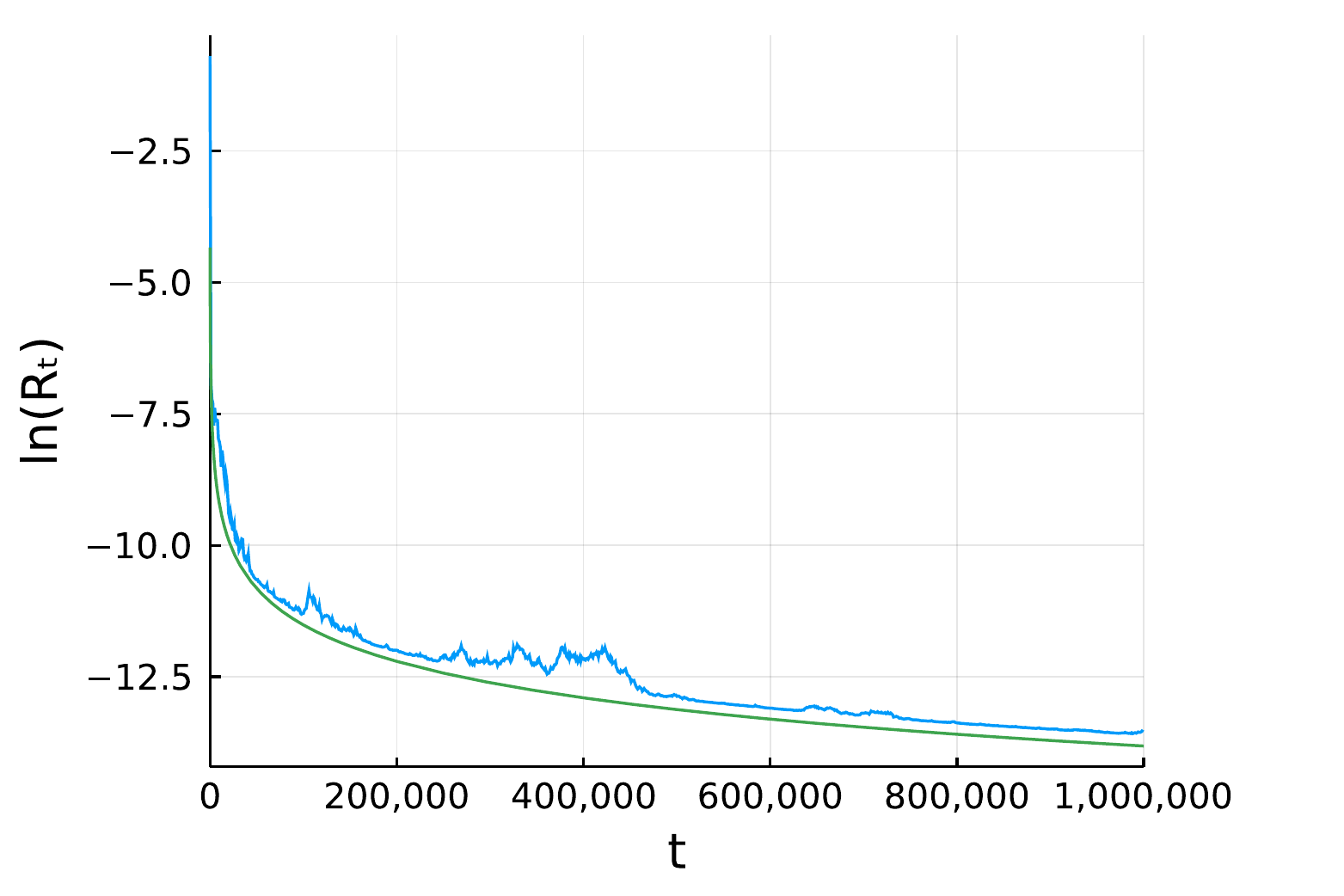}
    \caption{Bernoulli(${1}/{2}+\delta_t$) = Markov(${1}/{2}+\delta_t$, ${1}/{2}+\delta_t$). This source is one-sided, in that it slightly favours ones. No evidence against the null accumulates. The green line is the lower bound $-\ln(t)$ following from \eqref{eq:lbd}.}
  \end{subfigure}
  \caption{
    The evolution of our evidence $(\ln R_t)$ for a time-varying Markov process. On the left the transition probabilities are $p_{1|1}(t) = p_{0|0}(t) = 1/2+\delta_t$ for $\delta_t := F(t+1) - F(t)$ with $F(t) := \min \{t/2, \sqrt{t} \ln(1+t)\}$. On the right, we perform a sanity check with $p_{1|1}(t) = p_{1|0}(t) = 1/2+\delta_t$ instead, which renders this a time-varying Bernoulli process. As predicted, on the left the log-evidence diverges, growing at a logarithmic rate (which is far slower than the linear rate of Figure~\ref{fig:markov}). The right picture confirms that this is due to the Markovian aspect of the source, and not due to the changing probabilities.
  }\label{fig:pincer}
\end{figure}

Despite the generality and pathwise nature of Theorem~\ref{T:4}, this experiment points towards the possibility of strengthening it even further. Indeed, Theorem~\ref{T:4} has a very natural sufficient condition for consistency, but this experiment shows that even when this condition does not hold (meaning that the limits~\eqref{eq:limits} exist and the condition~\eqref{eq:identifiability} fails), then our safe e-process can still be consistent in certain subtle settings. These subtleties were apparent to us when designing this example, but we are not yet sure how to formalize these observations in a general fashion, and thus leave this to future work.

Another such example that demonstrates the unexpected power of our approach is examined next.


\subsection{Power against a change point alternative}\label{sec:changepoint}
We consider a somewhat counterintuitive example to show that a first-order Markov alternative to exchangeability is perhaps more powerful than one may believe at first sight. Consider a length $2n$ sequence of coin flips sampled from $\mathrm{Ber}(p)^{n} \mathrm{Ber}(q)^{n}$ for some $p \neq q$. To match the setup of this example with our initial problem set up, one could potentially extend this to an infinite sequence in an arbitrary way, for example just continuing as $\mathrm{Ber}(q)$ after time $2n$. 

This sequence is clearly not exchangeable. It is, however, not clear whether our proposed first-order Markov alternative would detect (much) evidence against the null, as the sequence is not Markov, but is more like a change point alternative. Detecting evidence is not a given; the outcomes are in fact independent (albeit not identically distributed). Hence there is no first-order dependency structure for the Markov model to exploit. And on top of that, there seems to be only one problematic time-point, precisely half-way through the sequence. So even if the Markov model somehow exploited this, how could it gain an amount of evidence growing with the length $n$ of the sequence?

We now show that the above arguments are all misguided, and that the process $(R_t)$ from \eqref{eq:solution} gains an amount of evidence against the exchangeable null that grows exponentially with $t$, between time $n$ and $2n$. The evolution of $(R_t)$ on a typical run of this process is shown in Figure~\ref{fig:Rt}.

Initially, $(R_t)$ loses steam and tends towards zero at a rate $1/t$ before time $n$ since the null is true and there is a price to pay for the Jeffreys' mixture over the alternative. To calibrate what to expect after the change point, think of $n$ as being relatively large so that we can reason about empirical frequencies of zeros and ones with more ease. Let us compute the maximum likelihood parameters for typical sequences with frequency (tending to) $p$ in the first half and $q$ in the second half. For the Bernoulli model, we find $\hat p = (p+q)/{2}$. For the first-order Markov model we find that
\[
  \hat p_{1|1} ~=~
  \frac{
    p^2+q^2
  }{
    p + q
  }
  \qquad
  \text{and}
  \qquad
  \hat p_{1|0} ~=~
  \frac{
    (1-p) p + (1-q) q
  }{
    (1-p)
    + (1-q)
  }.
\]
The main observation here is that the best Markov model is i.i.d.\ if $\hat p_{1|1} = \hat p_{1|0} = \hat p$, which occurs if and only if $p=q$. The fact that an exploitable first-order Markov dependency structure arises can perhaps be best observed in the extreme case $p=0$ and $q=1$. As this comparison does not really depend on $n$, we find that for all other parameter settings with $p \neq q$, the Markov model will gain overall evidence exponentially growing with $t$ between time $n$ and $2n$. (Technically, the exponential growth does not start immediately at time $n+1$, but it does so eventually.) However, as $t$ grows even further --- say beyond $t=n^2$ or $t=2^n$ --- $(R_t)$ will decrease once more towards zero. This is because the sequence eventually is dominated by i.i.d.\ $\mathrm{Ber}(q)$ coin flips, and the MLE under the null explains the data very well.

\begin{figure}[t!]
  \centering
  \includegraphics[width=0.45\textwidth]{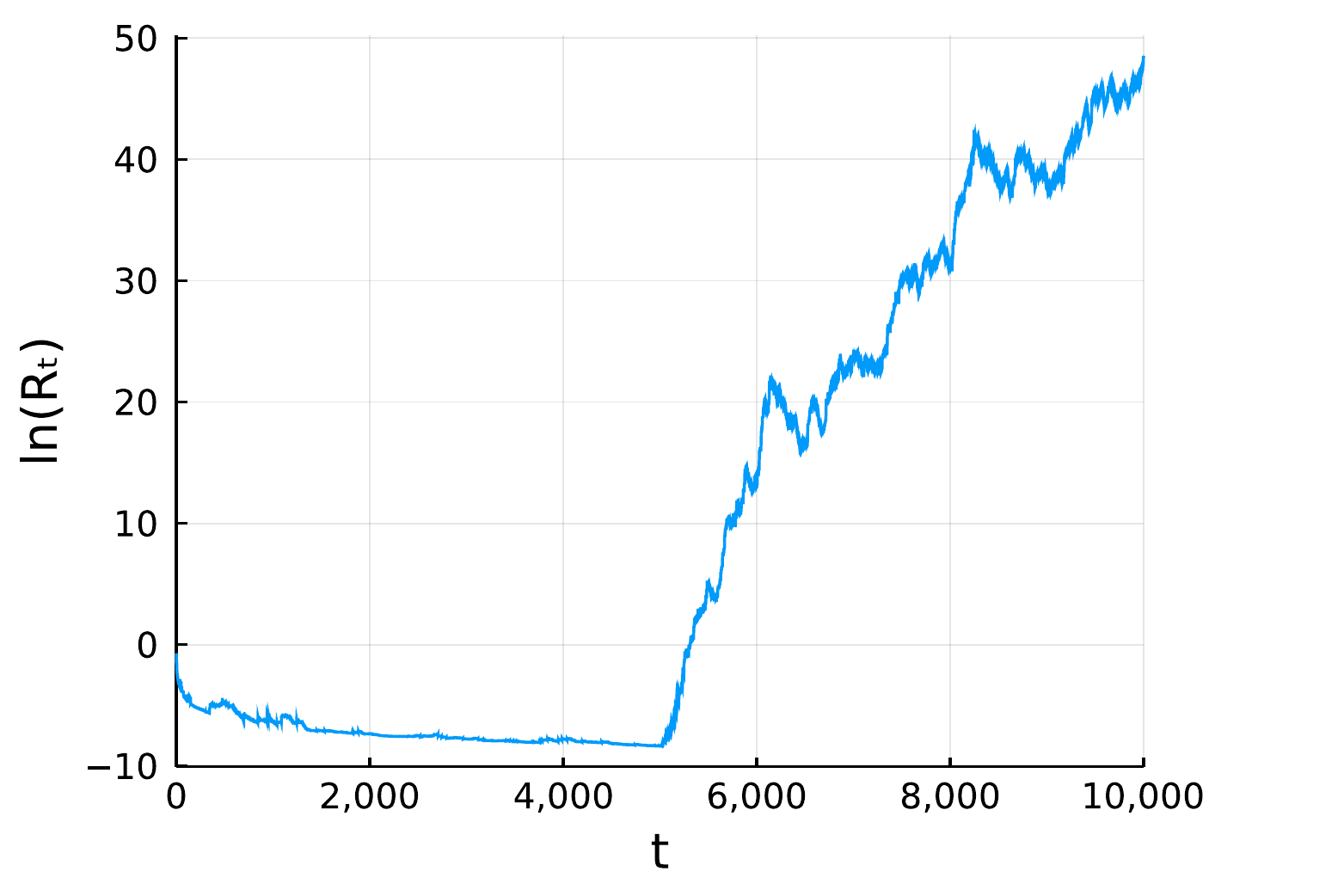}
  \caption{The process $(\ln R_t)$ on a sequence sampled from $\mathrm{Ber}(.1)^{5000}\mathrm{Ber}(.4)^{5000}$. On the first half, we see $(R_t)$ decays as $1/t$, which is due to the overhead of Jeffreys' mixture for the Markov model over the maximum likelihood Bernoulli parameter. After the change point, we see $(R_t)$ increasing fast on the exponential scale. Recalling~\eqref{eq:pvalue} for those more familiar with the p-value scale, the corresponding anytime p-process dips below $10^{-20}$ towards the end.
  }\label{fig:Rt}
\end{figure}

Thus, for this example, we do not get a power one test, nor should we expect a single change point away from an i.i.d.\ model to yield power one for a test designed to be powerful against Markovian alternatives. 
However, if the initial pattern repeats itself after $2n$ steps, meaning that we keep alternating between $\mathrm{Ber}(p)$ and $\mathrm{Ber}(q)$ models, then $(R_t)$ does have power one, and this is interesting because $(R_t)$ is designed for first-order Markov alternatives, but it is consistent against these $2n$-th order Markov alternatives.

In fact, one can argue that it is information theoretically impossible to design \emph{any} power one test, including tests that are tuned to detect a single change point. To see why, think of very small $n$, like $n=2$, to make the reason intuitively transparent. How can a test possibly have enough evidence with just two coin flips before the change point, to know with probability one a change actually did occur? Naturally, the larger the time $n$ of the change point, the higher the power could be of any such test (as it is for our test also), but no test can possibly have power one since there is always some small probability (vanishing with $n$) that the distribution of the first $n$ coin flips looks similar to the post-change distribution.

Nevertheless, this simple example illustrates the point that our proposed e-process $(R_t)$ for evidence against exchangeability is actually powerful even in scenarios that are not (close to) Markov.

\paragraph{Deriving other e-processes targeted towards change point alternatives.}

    For readers explicitly interested in powerful tests to detect change point alternatives in the setting of this paper, we briefly describe a powerful test (albeit not a power one test, as already explained above). Essentially, one can combine the ideas in Remark~\ref{rem:countable-alternatives} with those in Section~\ref{sec:betting}. We let $\Pcal_k$ denote the alternative in which the change point is hypothesized to occur at time $n=2^k$, though other increasing functions of $k$ may also suffice. We will define an e-process $(E^k_t)$ for each $k$ and then use a countable mixture over $k$ as the final e-process.
    
    Now, we describe an e-process for a fixed $k$.
    Define $(g^{-}_s)_{s=1}^n$ to be a smoothed non-anticipating maximum likelihood estimator, calculated using data from time $1$ to $s-1$. The smoothing step is simple: add a single fake observation worth half a heads (or half a tails) to the counts when determining the MLE. The smoothing leads to a slight regularization that can be viewed as the maximum-a-posteriori estimate using a $\text{Beta}(1/2,1/2)$ prior, analogous to Krichevsky-Trofimov betting~\cite{krichevsky1981}.
    Similarly, define $(g^{+}_s)_{s=n+1}^\infty$ to be the same smoothed non-anticipating maximum likelihood estimator, but calculated using data from time $n+1$ to $s-1$. In both cases, the smoothing also leads to a well-defined function $g^-_1$ and $g^+_{n+1}$, which are effectively treated as a $\mathrm{Ber}(1/2)$ model. 
    Finally, define the e-process $(E^k_t)$ as the ratio of $\prod_{s=1}^{n \wedge t} g^-_s(X_s) \prod_{s = (n+1)}^t g^+_s(X_s)$ to the maximum likelihood under the i.i.d.\ null. In other words, the denominator is identical to one of $R_t$, but the numerator has changed because the targeted alternative is now different. 
    
    Recalling Section~\ref{sec:betting} (and the final section of~\citet{wasserman2020universal}), it is easy to see that $(E^k_t)$ is a $\Qcal$-safe e-process. If a change point occurs at time $n^*$ (and let $k^* := \lfloor \ln_2 n^* \rfloor$), the e-processes $E^{k^*}_t$ and $E^{k^*+1}_t$ will grow exponentially between time $n^*$ and $2n^*$. Even with the countable weighting of Remark~\ref{rem:countable-alternatives}, their exponential growth washes out the inverse polynomial weights, to yield a powerful e-process $(E_t)$.
    
    Naturally, many permutations and combinations of these ideas (see e.g.\ the literature on switching experts \cite{ehmms}) can be used to derive a variety of tests against different kinds of alternatives. See Section~\ref{sec:vovk-conformal} and~\cite{vovk2021retrain} for an approach based on conformal prediction. We leave further exploration of these variants to future work.

\subsection{Power on a real-world data set}\label{sec:real.data}
We conclude the empirical evaluation with a jovial yet practical question: if we track the days on which it rains in London, do we get an exchangeable sequence? Three of us having lived there deemed this hypothesis unlikely yet not implausible, but a Markovian alternative certainly seems a-priori more plausible. To answer our question, we obtained daily areal rainfall data for North London from \url{https://data.london.gov.uk/dataset/daily-areal-rainfall}, spanning the 4404 days between 01/01/2007 to 21/01/2019. We binarised the data by comparing the daily total millimeters to zero, resulting in 59\% rainy days overall. As is clear from Figure~\ref{fig:rain}, the null hypothesis can be safely rejected.

\begin{figure}[ht!]
  \centering
  \includegraphics[width=.45\textwidth]{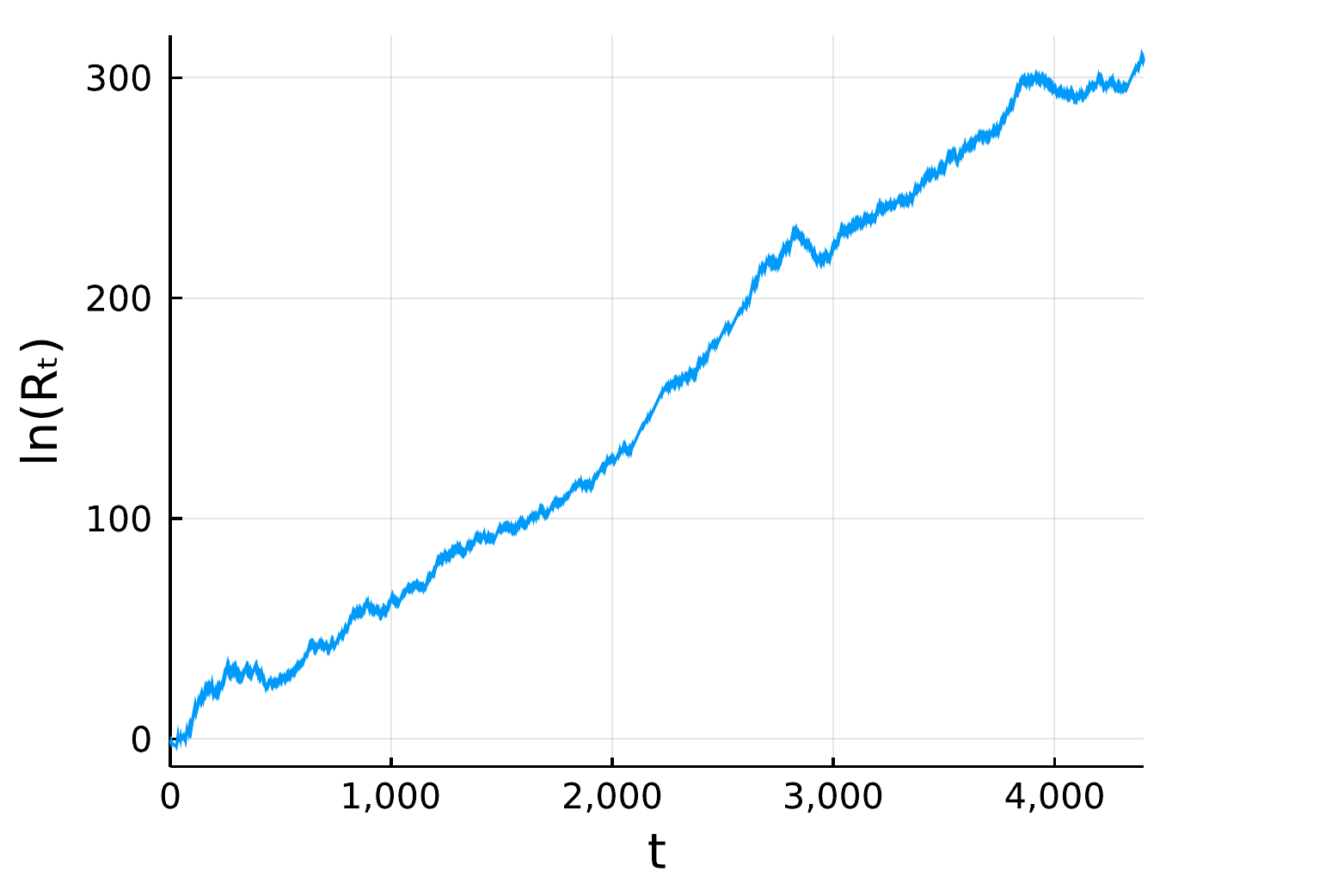}
  \caption{The process $(\ln R_t)$ accumulating evidence against the hypothesis that daily rain in North London is an exchangeable process.}
  \label{fig:rain}
\end{figure}

\subsection{An equivalent viewpoint based on confidence sequences}\label{sec:CS-equiv}

It is possible to view our sequential test --- rejecting the null if $(R_t)$ ever exceeds $1/\alpha$ --- in terms of sequential estimation using confidence sequences~\cite{darling_confidence_1967,howard_uniform_2019,ramdas2020admissible}. 
Denoting $\Qtt_p := \mathrm{Ber}(p)^\infty$, a $(1-\alpha)$-confidence sequence for $p$ is a sequence of confidence sets $(C_t)_{t \geq 1}$ such that $\Qtt_p(\exists t\geq 1: p \notin C_t) \leq \alpha$. Recalling~\eqref{eq:Jeffrey-point-null}, define 
\[
C_t := \left\{ q :  R^{\JP,q}_t < \frac{1}{\alpha}\right\}.
\]
To see that $(C_t)$ is a confidence sequence for $p$, note that $p \notin C_t$ if and only if $R^{\JP,p}_t \geq 1/\alpha$, but Ville's inequality yields that $\Qtt_p(\exists t \geq 1: R^{\JP,p}_t \geq 1/\alpha)$ since $R^{\JP,p}_t$ is a $\Qtt_p$-NM.

Based on the above argument, Theorems~\ref{thm:valid} and \ref{T:4} together imply the following:
\begin{itemize}
\item Rejecting the null as soon as $(C_t)$ becomes empty is identical to rejecting if $(R_t)$ ever exceeds $1/\alpha$:
\begin{equation}\label{eq:CS-stop}
\inf\{t \geq 1: C_t = \emptyset\} = \inf\left\{t \geq 1: R_t \geq \frac{1}{\alpha}\right\}.
\end{equation}
This is because $R_t = \inf_{q \in [0,1]} R^{\JP,q}_t$, making the decision rules identical.
\item If the data $X_1,X_2,\dots$ are exchangeable, then with probability at least $1-\alpha$, $C_t$ is always nonempty. Indeed, it is possible to argue that $C_t$  will shrink to a single point with probability $\geq 1-\alpha$. Indeed, a single draw from $\Qtt \in \overline \Qcal$ is simply an i.i.d.~$\mathrm{Ber}(p)^\infty$ sequence for some $p$; and for any $q \neq p$,  $R^{\JP,q}_s$ will eventually exceed $1/\alpha$ because Jeffrey's mixture  likelihood (a mixture over all first-order Markov chains, including the special case that recovers an i.i.d.~$\mathrm{Ber}(p)$ sequence) will explain the data much better than the $\mathrm{Ber}(q)$ likelihood. This can be formalized via the regret bound~\eqref{eq:lbd} but this simple argument is omitted for brevity.
\item Since $C_t$ contains the aforementioned $p$ under the null (simultaneously for all $t$ with probability at least $1-\alpha$), we can infer that $\bigcap_{t=1}^\infty C_t \equiv 
 \left\{ q : \sup_{t \geq 1} R^{\JP,q}_t <1/\alpha\right\}$ is also nonempty with probability at least $1-\alpha$. This means that the decision rule $\inf\{t \geq 1: \bigcap_{s \leq t}C_s = \emptyset\}$ yields a valid level $\alpha$ sequential test that could potentially halt sooner than \eqref{eq:CS-stop}. (The advantage is only expected to be slight in practice.)
\item For any sequence of observations where the limits in~\eqref{eq:limits} exist but~\eqref{eq:identifiability} fails to hold, $C_t$ will become empty at some point (deterministically). As explained around Theorem~\ref{T:4}, the aforementioned failure of~\eqref{eq:identifiability} happens with probability one under first-order Markovian alternatives, for example.
\end{itemize}
Figure~\ref{fig:CIs} illustrates these points using the running intersection $\bigcap_{s \leq t} C_s$ on synthetic and real-world data.

\begin{figure}[ht!]
  \centering
  \begin{subfigure}[t]{0.47\textwidth}
    \includegraphics[width=\textwidth]{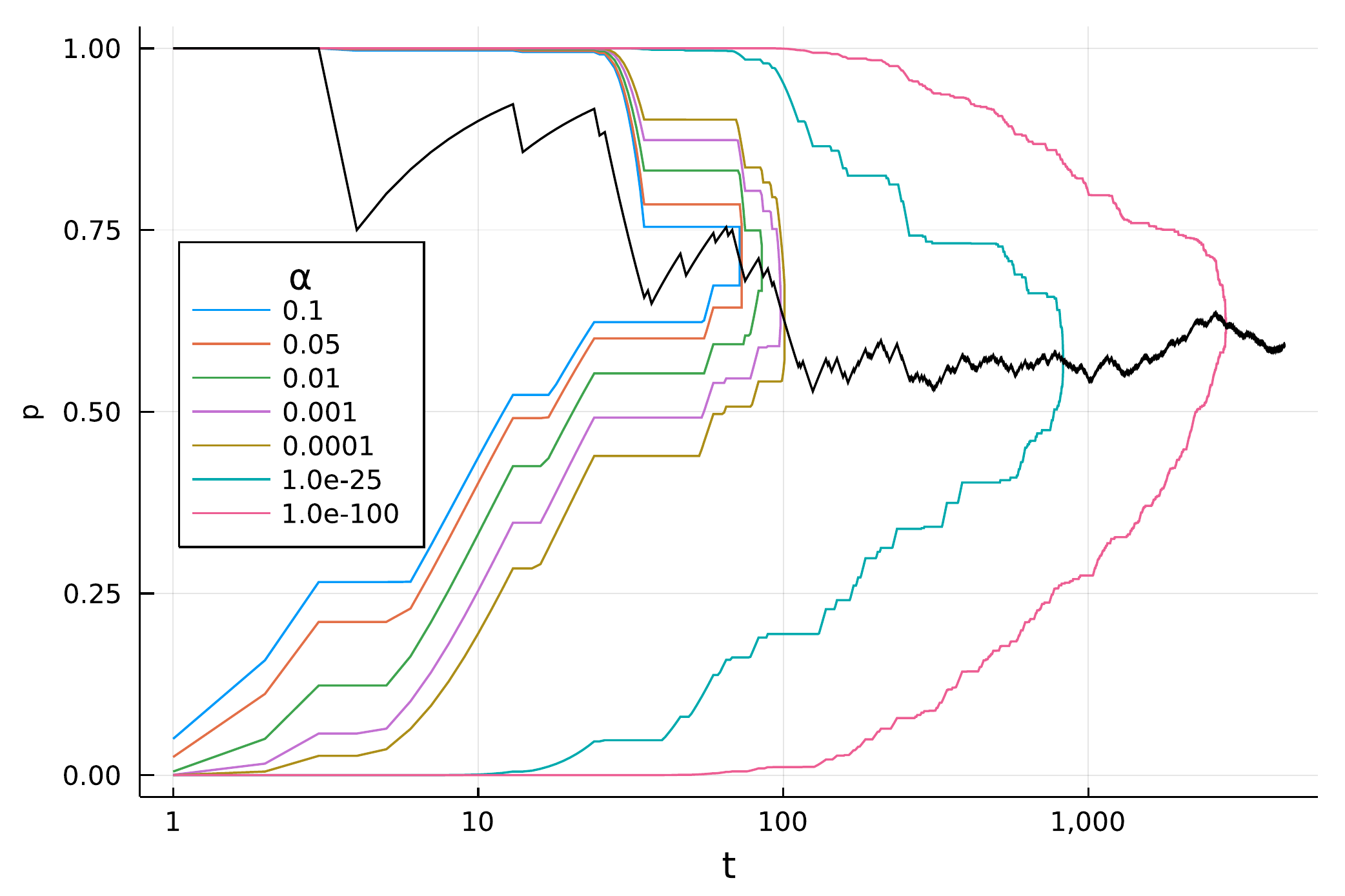}
    \caption{The rain dataset from Section~\ref{sec:real.data}. At all $\alpha$ considered, the confidence interval becomes empty, meaning that we reject the null.}
  \end{subfigure}
  \quad
  \begin{subfigure}[t]{0.47\textwidth}
    \includegraphics[width=\textwidth]{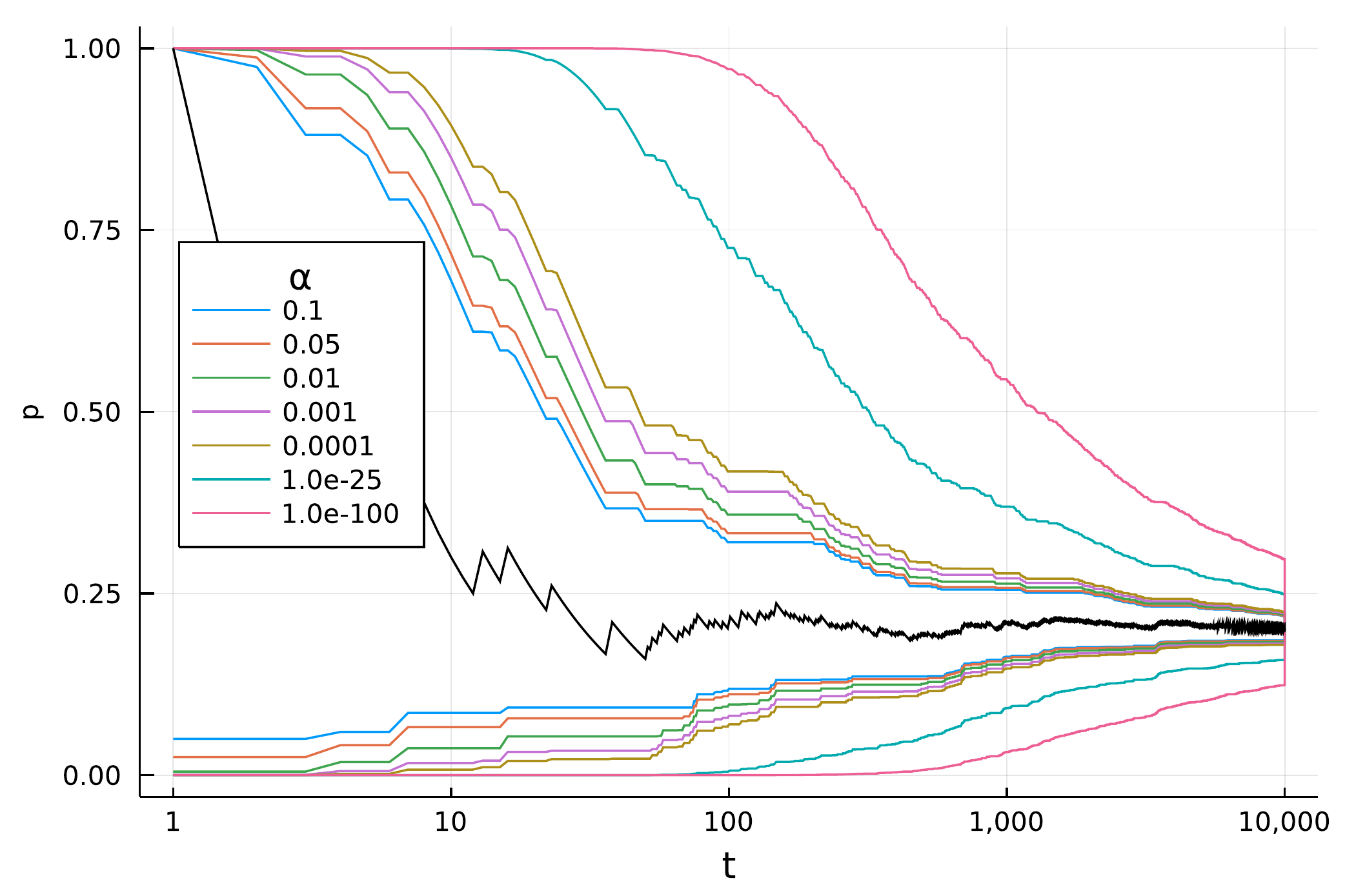}
    \caption{$\mathrm{Ber}(0.2)$ (as in Section~\ref{SS:sanity1}). Here the null is true, and indeed all confidence intervals stay non-empty.}
  \end{subfigure}
  \caption{The confidence sequence (anytime-valid confidence interval) $\bigcap_{s \leq t} C_s$ for $p$ is depicted by its lower and upper boundaries as a function of $t$, at various coverage probabilities $\alpha \in (0,1)$. The running average of the data is displayed in black.
  As predicted, the confidence sequence becomes empty when the null is false (left), while it stays non-empty when the null is true (right).
  }
  \label{fig:CIs}
\end{figure}

\subsection{Calibrated p-processes and adjusted e-processes for not losing capital}\label{sec:calibration}

While $(R_t)$ is a $\Qcal$-safe e-process, $(\max_{s\leq t} R_s)$ is not. In other words, we are only allowed to measure our performance based on the wealth accumulated thus far and not the highest wealth that we reached at some point in the process. The same is not true for p-processes: $(1/R_t)$ is a $\Qcal$-valid p-process, and so is $(1/\max_{s\leq t} R_s)$, the latter being the running infimum of the former. In game-theoretic terminology, the gambler can decide to stop playing the game (betting against the null) according to any stopping rule $\tau$, but once they have stopped, only the final wealth $R_\tau$ of the gambler matters, and a nearly bankrupt gambler cannot point to their past wealth as a measure of their proficiency. This subtle point particularly manifests itself in the above change point example, because with a single change point, $(R_t)$ rises to some amount (above $10^{20}$ in the figure) and then will shrink back to zero, so if we happen to stop too late, then $R_\tau$ could provide only meagre evidence even though it was once astronomically large.

So how can we get around this worrisome issue? We take inspiration from~\citet{shafer_test_2011} and use ``calibrated p-processes'' as our e-processes. (As a matter of terminology, our use of calibration here can be seen as an $E \to P \to E$ process, but if we skip the middle step entirely, the $E \to E$ direct method has been called ``adjustment'' by~\citet{DAWID2011157,dawid2011probability,koolen2014buy}. We will present it from both angles below to tie some loose ends in the literature together.)

Define $\Pval_t := 1/\max_{s\leq t} R_s$, so that $(\Pval_t)$ is a $\Qcal$-valid p-process that satisfies~\eqref{eq:pvalue}. Let $f$ be a calibrator~\cite{shafer_test_2011,ramdas2020admissible}, which is a nonincreasing function $f$ such that $\int_0^1 f(u) \dd u = 1$. Then $(f(\Pval_t))$ is a $\Qcal$-safe e-process. It is not hard to check that $f(\Pval_t) \leq R_t$, so there is some price to pay for being able to take the best possible wealth into account. One possible choice for $f$ is given by
\[
f(u):= \frac{1-u+u \ln u}{u (-\ln u)^2} ~ ;
\]
also see \citet[Eq. (2)]{vovk2019values}.

In order to do things more directly, let $F$ be an adjuster~\cite{shafer_test_2011,dawid2011probability}, which is an increasing function $F$ such that $\int_1^\infty F(y) y^{-2} \dd y = 1$. Defining $A_t := F(\max_{s \leq t} R_s)$ yields that $(A_t)$ is a $\Qcal$-safe e-process, and indeed as before  $A_t \leq R_t$. One possible choice for $F$ is given by
\[
F(y) := \frac{y^2 \ln 2}{(1+y) (\ln(1+y))^2}.
\]
Thus, even if $(R_t)$  rises sharply and then decreases to zero eventually, $(A_t)$ does not since it is nondecreasing. In fact, using the $F$ given above in our example with a single change point, and noting that $F(y) \asymp y / (\ln y)^2$ for large $y$, we see that $A_\infty \approx 10^{17}$ even though $R_\infty = 0$. Of course, if $R_t \to \infty$ then so does $F(R_t)$, meaning that it does not lose the consistency property against Markovian alternatives.

Thus, at a (squared) logarithmic price to the overall capital, one can be protected against future losses, and for this reason we recommend using $A_t = F(R_t)$ as an e-process if we are uncertain about how close our alternative might be to the idealized Markovian case studied here.

\section{Summary and discussion}\label{sec:discussion}

\subsection{From convex hulls to fork-convex hulls}

The celebrated theorem of de Finetti --- for which many proofs exist including based on elementary arguments~\cite{kirsch2018elementary} --- states that all exchangeable binary sequences are mixtures of i.i.d.\ sequences (this was generalized much beyond the binary setting by Hewitt and Savage~\cite{hewitt1955symmetric}). In fact, for any exchangeable sequence, the empirical measure $P_t := (1/t) \sum_{s=1}^t \delta_{X_s}$ converges in distribution to a measure $\mu$ supported on $[0,1]$, and this is the so-called ``de Finetti mixing measure'' alluded to in the previous sentence. The crux of the matter is that the convex hull of all i.i.d.\ distributions over infinite binary sequences is precisely the set of exchangeable distributions. Since the convex hull preserves properties like safety (of e-processes) and validity (of sequential tests and p-processes), one can develop tests for the i.i.d.\ setting, which is itself a nontrivial composite null, and invoke de Finetti's theorem to extend the result to the exchangeable setting. 

In this paper, we go several steps further: we prove that the set of Markovian sequences lies inside the ``fork-convex hull'' of all exchangeable (or i.i.d.)\ sequences. In fact, Theorem~\ref{thm:every-law-fork-convex} shows that the closed fork-convex hull is so large that \emph{every} law over binary sequences is contained in it!

The fork-convex hull of a set of distributions can be informally thought of as a ``predictable mixture of these distributions'' (here, we borrow the terminology of predictable mixtures from~\citet{waudby2020variance}, where it was applied to processes). Speaking informally in language inspired from game-theoretic probability, if Reality wanted to draw an infinite sequence from the fork-convex hull of $\Qcal$, it would pick $\Qtt^1$ and draw (only) $X_1$ from it, then based on that the outcome, Reality would pick $\Qtt^2$ and draw (only) $X_2$ from its conditional distribution given $X_1$, and so forth ad infinitum, each time using the observed data to pick the next (conditional) distribution as it wishes. 

Fork-convexity was a central object in this paper, and one of its primary roles was in dealing a fatal blow to tests based on constructing nonnegative supermartingales, as summarized next.

\subsection{The powerlessness of test supermartingales, and a powerful e-process}

Theorem~\ref{thm:fork-safe} shows that the nonnegative supermartingale (NSM) property is preserved not just by taking the convex hull of a set of distributions, but also when taking the (much larger) fork-convex hull.
Using composite Snell envelopes, Corollary~\ref{cor:safe-fork-closure} shows that if a safe test for $\Qcal$ is upper bounded by a $\Qcal$-NSM, then it must have already been safe for the fork-convex hull of $\Qcal$ (and thus powerless). 
Together, these results show that any  NSM under exchangeable distributions is also an  NSM under Markovian distributions, and in fact it is an NSM under \emph{every} distribution over binary sequences. In other words, test statistics that are NSMs (or ``test supermartingales'') are nonincreasing sequences, rendering them powerless to distinguish non-exchangeable distributions from exchangeable ones. 

We get around the above hurdles by designing a safe e-process $(R_t)$ in~\eqref{eq:solution} that is upper bounded by some nonnegative martingale for every exchangeable distribution, despite not being an NSM itself. (The idea of designing a process that is upper bounded by an NSM, despite not being one itself, also appears elsewhere in the sequential testing literature~\cite{howard_exponential_2018}, but for different reasons.) Our safe e-process uses the method of mixtures with Jeffreys' prior to handle the composite alternative, along with the maximum likelihood under the null, to ultimately yield a computationally efficient closed-form e-process. This e-process not only has the desired safety properties at arbitrary stopping times (Theorem~\ref{thm:valid}), but we prove that it has power one against any first-order Markovian alternative, and also a generic dense set of higher-order Markovian alternatives (Theorem~\ref{T:4}). For first-order alternatives, the e-process also grows at an optimal rate, as implied by a regret bound~\eqref{eq:lbd} borrowed from the universal coding literature.

Section~\ref{sec:jeffreys+ML} also describes how to derive other e-processes that work for higher-order Markovian alternatives, and finally also for even more general, loosely specified alternatives by combining the method of predictable mixtures~\cite{waudby2020variance} and betting~\cite{shafer2019language}, along with universal inference~\cite{wasserman2020universal}.

\subsection{Vovk's approach based on conformal prediction}\label{sec:vovk-conformal}

An alternate approach towards testing exchangeability was recently expounded by~\citet{vovk_testing_randomness_2019}, which is based on conformal prediction. It replaces the canonical filtration $(\Fcal_t)$ by a poorer filtration $(\Gcal_t)$ formed by conformal p-values. (We refer to their paper for the technical details.)
Vovk then produces a sequence of independent p-variables under the null, which are converted to e-processes by appropriate calibration, which are in turn combined to form a martingale with respect to $(\Gcal_t)$.  This is particularly interesting because, despite the only martingales with respect to $(\Fcal_t)$ being constants, Vovk is able to identify nontrivial martingales with respect to an appropriately impoverished filtration $(\Gcal_t)$.

Our approaches based on Jeffreys' mixture and the nonanticipating likelihood (or predictable mixture) can be seen as providing two alternatives to Vovk's methodology. 
Vovk's algorithm seems particularly powerful for change point alternatives, making them most similar to the extensions discussed in Section~\ref{sec:changepoint}, while our paper focuses more on Markov alternatives. Further, since Vovk's martingale property only holds with respect to $(\Gcal_t)$, so are the set of stopping times for which his method has the safety property. In other words, the algorithm is not allowed to look at the raw data and decide when to stop; it must only look at the sequence of p-variables. In contrast, our method is safe with respect to a much larger class of stopping times, indeed \emph{all possible} stopping times (with respect to $(\Fcal_t)$, the canonical filtration). However, in the end, the details of both works appear to be very different, and the conceptual principles by which the methods are derived also differ significantly.


A final, alternate approach to this problem could utilize \emph{reverse} martingales and \emph{exchangeable} filtrations.
To elaborate, the exchangeable
filtration is the reverse filtration
$(\mathcal E_t)_{t=0}^\infty$ 
where $\mathcal E_0 := \sigma(\{X_1, X_2, \dots\})$, 
and for all $t \geq 1$,
$\mathcal E_t$ denotes the $\sigma$-algebra
generated by all 
real-valued Borel-measurable functions
$f(X_1, X_2, \dots)$ which are permutation-symmetric in their 
first $t$ arguments, so that $\mathcal E_0 \supseteq \mathcal E_1 \supseteq \mathcal E_2 \ldots$.
It is known that if the data are exchangeable, then the empirical
measure $P_t := (1/t) \sum_{s = 1}^t \delta_{X_s}$
forms a measure-valued reverse martingale
with respect to the exchangeable filtration, in the sense
that $(\int g \dd P_t)$, is a reverse martingale
for any bounded and Borel-measurable function $g$
\citep{kallenberg2006}. In fact, the converse of this statement
also holds true if the sequence $(X_t)$
is stationary \citep{bladt2019}. This fact has recently been exploited to develop confidence sequences and sequential tests in other contexts~\cite{manole2021sequential}. We hope to explore in more detail whether this approach can lead to powerful tests for exchangeability in the future.

\subsection{A game-theoretic protocol and reinterpretation of Theorem~\ref{T:4}}\label{sec:game-theoretic}

Despite our work being grounded in measure theory, it has some implications for game theoretic probability, as developed by Shafer and Vovk, amongst others. Due to lack of space, we presume the reader has some general familiarity with their setup and refer to their latest book~\cite{shafer2019game} for details. We recall from~\citet{shafer2019language} the basic protocol of testing by betting:
\begin{quotation}
    \noindent A skeptic begins with initial wealth of one, $\mathcal K_0:=1$.\\
    \textit{for $t=1,2,\dots$}\\
    --- A forecaster announces a distribution $Q_t$ for $X_t$.\\
    --- A skeptic announces a bet $S_t: \{0,1\} \to \RR^+$ such that $\EE_{Q_t}[S_t(X_t)] \leq 1$.\\
    --- Reality announces the outcome $x_t$.\\ 
    --- The skeptic's wealth gets updated as $\mathcal K_t := \mathcal K_{t-1}\cdot S_t(x_t)$.
\end{quotation}
At any point in time, the skeptic's wealth $\mathcal K_t$ acts as a measure of evidence against the forecaster. Instead of announcing a single distribution $Q_t$, the forecaster could announce a set of distributions $\Qcal_t$, in which case the bet $S_t$ must satisfy the constraint that $\sup_{Q_t \in \Qcal_t} \EE_{Q_t}[S_t(X_t)] \leq 1$. While this protocol is very general, the setup for the current paper is better understood using the following protocol.
\begin{quote}
A forecaster F announces a set of distributions $\overline{\Qcal}$ for the entire sequence.\\
A skeptic S wishes to challenge F, and observes that $\overline{\Qcal}=\text{conv}(\bigcup_{m \in \mathcal M}\Qcal^m)$, for some set $\mathcal M$, where $\Qcal^m$ is itself a set of distributions.\\
S requests F to play a different game for each $m \in \mathcal M$, starting each with one dollar, $\mathcal K^m_0:=1$.\\
F accepts the request, as long as the Skeptic's net wealth is measured by its worst performance across all games, that is, $\mathcal K_t := \inf_{m \in \mathcal M} \mathcal K^m_t$.\\
\textit{for $t=1,2,\dots$}\\
    --- For each $m \in \mathcal M$, Skeptic places a bet $S^m_t: \{0,1\} \to \RR^+$ such that $\smash{\sup_{Q^m_t \in \Qcal^m_t} \EE_{Q^m_t}[S^m_t(X_t)] \leq 1}$,
    where $Q_t^m(X_t) := Q^m(X_t|X^{t-1})$ for each $Q^m \in \Qcal^m$ and $\Qcal^m_t:=\{Q^m_t : Q^m \in \Qcal^m\}$.\\
    --- Reality announces the (common across games) outcome $x_t$.\\ 
    --- The skeptic's wealth in the $m$-th game gets updated as $\mathcal K^m_t := \mathcal K^m_{t-1}\cdot S^m_t(x_t)$.
\end{quote}
 
In this paper, of course, $\mathcal M := [0,1]$, $m:=p$, and $\Qcal^m := \{\mathrm{Ber}(p)^\infty\}$ is a singleton. The skeptic's wealth $\mathcal K_t := \inf_{m \in \mathcal M} \mathcal K^m_t$ would be the game-theoretic instantiation of an e-process (while in the earlier setup, it would have been a supermartingale). Even though we do not employ the added generality, it is worth noting that both the above protocols are just special cases of the following protocol that affords the forecaster more flexibility.

\begin{quote}
S and F agree to play a set of games indexed by $m \in \mathcal M$, starting each with one dollar, $\mathcal K^m_0:=1$. The Skeptic's net wealth is measured by $\mathcal K_t := \inf_{m \in \mathcal M} \mathcal K^m_t$.\\
\textit{for $t=1,2,\dots$}\\
  --- For each $m \in \mathcal M$, Forecaster announces a set of distributions $\Qcal^m_t$ on $\mathcal{X}_t$.\\
    --- For each $m \in \mathcal M$, Skeptic places a bet $S^m_t: \mathcal{X}_t \to \RR^+$ such that $\smash{\sup_{Q^m_t \in \Qcal^m_t} \EE_{Q^m_t}[S^m_t(X_t)] \leq 1}$.\\
    --- Reality announces the (common across games) outcome $x_t \in \mathcal{X}_t$.\\ 
    --- The skeptic's wealth in the $m$-th game gets updated as $\mathcal K^m_t := \mathcal K^m_{t-1}\cdot S^m_t(x_t)$.
  \end{quote}

The reader may also refer to Chapter 10 of~\citet{shafer2019game} for related ideas that are presented somewhat differently there.

We now reinterpret our main theorem in a game-theoretic language, inspired by the original results of Ville's PhD thesis~\cite{ville_etude_1939}, which gave a gambling interpretation to measure-zero sets. In particular, Ville proved that for any event of measure zero (say, under $\mathrm{Ber}(1/2)^\infty$), one can design a betting strategy (a nonnegative martingale) whose wealth increases to infinity whenever that event occurs; see Proposition~8.14 in \citet{shafer2005probability}. Inspired by this result, and the fact that Theorem~\ref{T:4} is a pathwise statement that holds true for every path, we can restate it in the spirit of Ville's work. To this end, define the set 
\[
A:=\{\omega \in \{0,1\}^\infty: \text{ either the limits in~\eqref{eq:limits} do not exist, or they exist and condition~\eqref{eq:identifiability} holds}\}.
\]
Note that the set $A$ satisfies $\inf_{\Qtt \in \overline \Qcal} \Qtt(A) = 1$, meaning that for an exchangeable sequence, the aforementioned limits (almost) always hold and the corresponding condition is (almost) always satisfied. In other words, its complement satisfies $\sup_{\Qtt \in \overline \Qcal}\Qtt(A^c) = 0$. Then, Theorem~\ref{T:4} states that our process $(R_t)$ increases to infinity whenever $A^c$ occurs. The safety property of $(R_t)$ corresponds to it being a valid betting strategy under the second (and third) game-theoretic protocol presented above. Thus, informally, $(R_t)$ could be seen as an explicitly constructed ``witness'' to Ville's theorem in the above context. 

Deriving a fully general, `robust' version of Ville's theorem that holds for composite nulls $\Qcal$ (and yet recovers Ville's original theorem for singleton nulls) appears to be an important open direction.

\subsection*{Acknowledgments}
AR acknowledges NSF DMS grant 1916320. The authors also thank the reviewers for insightful feedback that led to significant improvements to the paper.


{
\hypersetup{linkcolor=red}
\bibliography{shafer}

\begin{thebibliography}{46}
\providecommand{\natexlab}[1]{#1}
\providecommand{\url}[1]{\texttt{#1}}
\expandafter\ifx\csname urlstyle\endcsname\relax
  \providecommand{\doi}[1]{doi: #1}\else
  \providecommand{\doi}{doi: \begingroup \urlstyle{rm}\Url}\fi

\bibitem[Bladt(2019)]{bladt2019}
Martin Bladt.
\newblock Characterisation of exchangeable sequences through empirical
  distributions.
\newblock \emph{arXiv:1903.07861}, 2019.

\bibitem[Darling and Robbins(1967)]{darling_confidence_1967}
D.~A. Darling and Herbert Robbins.
\newblock Confidence sequences for mean, variance, and median.
\newblock \emph{Proceedings of the National Academy of Sciences}, 58\penalty0
  (1):\penalty0 66--68, July 1967.
\newblock ISSN 0027-8424, 1091-6490.

\bibitem[Dawid et~al.(2011{\natexlab{a}})Dawid, de~Rooij, Grunwald, Koolen,
  Shafer, Shen, Vereshchagin, and Vovk]{dawid2011probability}
A~Philip Dawid, Steven de~Rooij, Peter Grunwald, Wouter~M Koolen, Glenn Shafer,
  Alexander Shen, Nikolai Vereshchagin, and Vladimir Vovk.
\newblock Probability-free pricing of adjusted {A}merican lookbacks.
\newblock \emph{arXiv preprint arXiv:1108.4113}, 2011{\natexlab{a}}.

\bibitem[Dawid et~al.(2011{\natexlab{b}})Dawid, {de Rooij}, Shafer, Shen,
  Vereshchagin, and Vovk]{DAWID2011157}
A.~Philip Dawid, Steven {de Rooij}, Glenn Shafer, Alexander Shen, Nikolai
  Vereshchagin, and Vladimir Vovk.
\newblock Insuring against loss of evidence in game-theoretic probability.
\newblock \emph{Statistics \& Probability Letters}, 81\penalty0 (1):\penalty0
  157--162, 2011{\natexlab{b}}.
\newblock ISSN 0167-7152.

\bibitem[de~la Peña et~al.(2007)de~la Peña, Klass, and
  Lai]{de_la_pena_pseudo-maximization_2007}
Victor~H. de~la Peña, Michael~J. Klass, and Tze~Leung Lai.
\newblock Pseudo-maximization and self-normalized processes.
\newblock \emph{Probability Surveys}, 4:\penalty0 172--192, 2007.
\newblock ISSN 1549-5787.

\bibitem[Delbaen(2006)]{MR2276899}
Freddy Delbaen.
\newblock The structure of m-stable sets and in particular of the set of risk
  neutral measures.
\newblock In \emph{In {M}emoriam {P}aul-{A}ndr\'{e} {M}eyer: {S}\'{e}minaire de
  {P}robabilit\'{e}s {XXXIX}}, volume 1874 of \emph{Lecture Notes in Math.},
  pages 215--258. Springer, Berlin, 2006.

\bibitem[Durrett(2017)]{durrett_probability:_2017}
Rick Durrett.
\newblock \emph{Probability: {Theory} and {Examples}}.
\newblock Cambridge University Press, 5a edition, 2017.

\bibitem[Epstein and Schneider(2003)]{MR2017864}
Larry~G. Epstein and Martin Schneider.
\newblock Recursive multiple-priors.
\newblock \emph{J. Econom. Theory}, 113\penalty0 (1):\penalty0 1--31, 2003.
\newblock ISSN 0022-0531.

\bibitem[F\"{o}llmer and Schied(2004)]{MR2169807}
Hans F\"{o}llmer and Alexander Schied.
\newblock \emph{Stochastic Finance}, volume~27 of \emph{De Gruyter Studies in
  Mathematics}.
\newblock Walter de Gruyter \& Co., Berlin, extended edition, 2004.

\bibitem[Grünwald et~al.(2019)Grünwald, de~Heide, and
  Koolen]{grunwald_safe_2019}
Peter Grünwald, Rianne de~Heide, and Wouter Koolen.
\newblock Safe {testing}.
\newblock \emph{arXiv:1906.07801}, 2019.

\bibitem[Hewitt and Savage(1955)]{hewitt1955symmetric}
Edwin Hewitt and Leonard~J Savage.
\newblock Symmetric measures on cartesian products.
\newblock \emph{Transactions of the American Mathematical Society}, 80\penalty0
  (2):\penalty0 470--501, 1955.

\bibitem[Howard et~al.(2020)Howard, Ramdas, McAuliffe, and
  Sekhon]{howard_exponential_2018}
Steven~R. Howard, Aaditya Ramdas, Jon McAuliffe, and Jasjeet Sekhon.
\newblock Time-uniform {Chernoff} bounds via nonnegative supermartingales.
\newblock \emph{Probability Surveys}, 17:\penalty0 257--317, 2020.

\bibitem[Howard et~al.(2021)Howard, Ramdas, McAuliffe, and
  Sekhon]{howard_uniform_2019}
Steven~R. Howard, Aaditya Ramdas, Jon McAuliffe, and Jasjeet Sekhon.
\newblock Time-uniform, nonparametric, nonasymptotic confidence sequences.
\newblock \emph{The Annals of Statistics}, 2021.

\bibitem[Iyengar(2005)]{MR2142033}
Garud~N. Iyengar.
\newblock Robust dynamic programming.
\newblock \emph{Math. Oper. Res.}, 30\penalty0 (2):\penalty0 257--280, 2005.
\newblock ISSN 0364-765X.

\bibitem[Kallenberg(2006)]{kallenberg2006}
Olav Kallenberg.
\newblock \emph{Probabilistic Symmetries and Invariance Principles}.
\newblock {Springer Science \& Business Media}, 2006.

\bibitem[Kaufmann and Koolen(2018)]{kaufmann2018mixture}
Emilie Kaufmann and Wouter Koolen.
\newblock Mixture martingales revisited with applications to sequential tests
  and confidence intervals.
\newblock \emph{arXiv:1811.11419}, 2018.

\bibitem[Kirsch(2018)]{kirsch2018elementary}
Werner Kirsch.
\newblock An elementary proof of de {F}inetti's theorem.
\newblock \emph{arXiv:1809.00882}, 2018.

\bibitem[Koolen and de~Rooij(2013)]{ehmms}
Wouter~M. Koolen and Steven de~Rooij.
\newblock Universal codes from switching strategies.
\newblock \emph{IEEE Transactions on Information Theory}, 59\penalty0
  (11):\penalty0 7168--7185, 2013.
\newblock \doi{10.1109/TIT.2013.2273353}.

\bibitem[Koolen and Vovk(2014)]{koolen2014buy}
Wouter~M Koolen and Vladimir Vovk.
\newblock Buy low, sell high.
\newblock \emph{Theoretical Computer Science}, 558:\penalty0 144--158, 2014.

\bibitem[Krichevsky and Trofimov(1981)]{krichevsky1981}
Raphail~E. Krichevsky and Victor~K. Trofimov.
\newblock The performance of universal encoding.
\newblock \emph{IEEE Transactions on Information Theory}, 27\penalty0
  (2):\penalty0 199--207, March 1981.

\bibitem[Manole and Ramdas(2021)]{manole2021sequential}
Tudor Manole and Aaditya Ramdas.
\newblock Sequential estimation of convex divergences using reverse
  submartingales and exchangeable filtrations.
\newblock \emph{arXiv:2103.09267}, 2021.

\bibitem[Ramdas et~al.(2020)Ramdas, Ruf, Larsson, and
  Koolen]{ramdas2020admissible}
Aaditya Ramdas, Johannes Ruf, Martin Larsson, and Wouter Koolen.
\newblock Admissible anytime-valid sequential inference must rely on
  nonnegative martingales.
\newblock \emph{arXiv:2009.03167}, 2020.

\bibitem[Robbins(1970)]{robbins_statistical_1970}
Herbert Robbins.
\newblock Statistical methods related to the law of the iterated logarithm.
\newblock \emph{The Annals of Mathematical Statistics}, 41\penalty0
  (5):\penalty0 1397--1409, October 1970.
\newblock ISSN 0003-4851, 2168-8990.

\bibitem[Robbins and Siegmund(1968)]{robbins_iterated_1968}
Herbert Robbins and David Siegmund.
\newblock Iterated logarithm inequalities and related statistical procedures.
\newblock In \emph{Mathematics of the {Decision} {Sciences}, {Part} {II}},
  pages 267--279. American Mathematical Society, Providence, 1968.

\bibitem[Robbins and Siegmund(1970)]{robbins_boundary_1970}
Herbert Robbins and David Siegmund.
\newblock Boundary {crossing} {probabilities} for the {Wiener} {process} and
  {sample} {sums}.
\newblock \emph{The Annals of Mathematical Statistics}, 41\penalty0
  (5):\penalty0 1410--1429, October 1970.
\newblock ISSN 0003-4851, 2168-8990.

\bibitem[Robbins and Siegmund(1972)]{robbins_class_1972}
Herbert Robbins and David Siegmund.
\newblock A class of stopping rules for testing parametric hypotheses.
\newblock The Regents of the University of California, 1972.

\bibitem[Robbins and Siegmund(1974)]{robbins_expected_1974}
Herbert Robbins and David Siegmund.
\newblock The expected sample size of some tests of power one.
\newblock \emph{The Annals of Statistics}, 2\penalty0 (3):\penalty0 415--436,
  May 1974.
\newblock ISSN 0090-5364, 2168-8966.

\bibitem[Shafer(2020)]{shafer2019language}
Glenn Shafer.
\newblock The language of betting as a strategy for statistical and scientific
  communication (with discussion).
\newblock \emph{Journal of the Royal Statistical Society, Series A}, 2020.

\bibitem[Shafer and Vovk(2005)]{shafer2005probability}
Glenn Shafer and Vladimir Vovk.
\newblock \emph{Probability and finance: it's only a game!}, volume 491.
\newblock John Wiley \& Sons, 2005.

\bibitem[Shafer and Vovk(2019)]{shafer2019game}
Glenn Shafer and Vladimir Vovk.
\newblock \emph{Game-Theoretic Foundations for Probability and Finance}, volume
  455.
\newblock John Wiley \& Sons, 2019.

\bibitem[Shafer et~al.(2011)Shafer, Shen, Vereshchagin, and
  Vovk]{shafer_test_2011}
Glenn Shafer, Alexander Shen, Nikolai Vereshchagin, and Vladimir Vovk.
\newblock Test martingales, {Bayes} {factors} and $p$-{values}.
\newblock \emph{Statistical Science}, 26\penalty0 (1):\penalty0 84--101,
  February 2011.
\newblock ISSN 0883-4237, 2168-8745.

\bibitem[Shapiro(2016)]{MR3500621}
Alexander Shapiro.
\newblock Rectangular sets of probability measures.
\newblock \emph{Operations Research}, 64\penalty0 (2):\penalty0 528--541, 2016.
\newblock ISSN 0030-364X.

\bibitem[Shin et~al.(2021)Shin, Ramdas, and Rinaldo]{shin2021glrt}
Jaehyeok Shin, Aaditya Ramdas, and Alessandro Rinaldo.
\newblock Nonparametric iterated-logarithm extensions of the sequential
  generalized likelihood ratio test.
\newblock \emph{IEEE Journal on Selected Areas in Information Theory},
  2\penalty0 (2):\penalty0 691--704, 2021.
\newblock \doi{10.1109/JSAIT.2021.3081105}.

\bibitem[Takeuchi et~al.(2013)Takeuchi, Kawabata, and
  Barron]{DBLP:journals/tit/TakeuchiKB13}
Jun'ichi Takeuchi, Tsutomu Kawabata, and Andrew~R. Barron.
\newblock Properties of {J}effreys mixture for {M}arkov sources.
\newblock \emph{{IEEE} Trans. Inf. Theory}, 59\penalty0 (1):\penalty0 438--457,
  2013.

\bibitem[Ville(1939)]{ville_etude_1939}
J~Ville.
\newblock \emph{Étude {Critique} de la {Notion} de {Collectif}.}
\newblock Gauthier-Villars, Paris, 1939.

\bibitem[Vovk(2021)]{vovk_testing_randomness_2019}
Vladimir Vovk.
\newblock Testing randomness online.
\newblock \emph{Statistical Science}, 2021.

\bibitem[Vovk and Wang(2021)]{vovk2019values}
Vladimir Vovk and Ruodu Wang.
\newblock E-values: Calibration, combination, and applications.
\newblock \emph{The Annals of Statistics}, 2021.

\bibitem[Vovk et~al.(2021)Vovk, Petej, Nouretdinov, Ahlberg, Carlsson, and
  Gammerman]{vovk2021retrain}
Vladimir Vovk, Ivan Petej, Ilia Nouretdinov, Ernst Ahlberg, Lars Carlsson, and
  Alex Gammerman.
\newblock Retrain or not retrain: Conformal test martingales for change-point
  detection.
\newblock \emph{arXiv:2102.10439}, 2021.

\bibitem[\v{Z}itkovi\'{c}(2002)]{MR1883202}
Gordan \v{Z}itkovi\'{c}.
\newblock A filtered version of the bipolar theorem of {B}rannath and
  {S}chachermayer.
\newblock \emph{Journal of Theoretical Probability}, 15\penalty0 (1):\penalty0
  41--61, 2002.
\newblock ISSN 0894-9840.

\bibitem[Wald(1945)]{wald_sequential_1945}
Abraham Wald.
\newblock Sequential {tests} of {statistical} {hypotheses}.
\newblock \emph{Annals of Mathematical Statistics}, 16\penalty0 (2):\penalty0
  117--186, 1945.

\bibitem[Wald(1947)]{wald_sequential_1947}
Abraham Wald.
\newblock \emph{Sequential {Analysis}}.
\newblock John Wiley \& Sons, New York, 1947.

\bibitem[Wald and Wolfowitz(1948)]{wald_optimum_1948}
Abraham Wald and J~Wolfowitz.
\newblock Optimum {character} of the {sequential} {probability} {ratio} {test}.
\newblock \emph{Annals of Mathematical Statistics}, 19\penalty0 (3):\penalty0
  326--339, 1948.

\bibitem[Wasserman et~al.(2020)Wasserman, Ramdas, and
  Balakrishnan]{wasserman2020universal}
Larry Wasserman, Aaditya Ramdas, and Sivaraman Balakrishnan.
\newblock Universal inference.
\newblock \emph{Proceedings of the National Academy of Sciences}, 2020.
\newblock ISSN 0027-8424.

\bibitem[Waudby-Smith and Ramdas(2020)]{waudby2020variance}
Ian Waudby-Smith and Aaditya Ramdas.
\newblock Estimating means of bounded random variables by betting.
\newblock \emph{arXiv:2010.09686}, 2020.

\bibitem[Wiesemann et~al.(2013)Wiesemann, Kuhn, and Rustem]{MR3029483}
Wolfram Wiesemann, Daniel Kuhn, and Ber\c{c} Rustem.
\newblock Robust {M}arkov decision processes.
\newblock \emph{Mathematics of Operations Research}, 38\penalty0 (1):\penalty0
  153--183, 2013.
\newblock ISSN 0364-765X.

\bibitem[Willems et~al.(1995)Willems, Shtarkov, and Tjalkens]{willems1995}
F.~Willems, Y.~Shtarkov, and T.~Tjalkens.
\newblock The context-tree weighting method: basic properties.
\newblock \emph{IEEE Transactions on Information Theory}, 41:\penalty0
  653--664, 1995.

\end{thebibliography}
\bibliographystyle{plainnat}
}

\appendix

\section{Additional technical concepts and definitions} \label{sec:technical_appendix}

\subsection{Reference measures and local absolute continuity}
\label{sec:ref_measure}

Consider a probability space with a filtration $(\Fcal_t)_{t \in \NN_0}$. Let $\Rtt$ be a particular probability measure on $\Fcal_\infty$; we think of $\Rtt$ as a \emph{reference measure}.
We now explain the concept of local domination and how it allows us to unambiguously define conditional expectations.

\begin{itemize}
    \item If $\Ptt$ is a probability measure on $\Fcal_\infty$ and $\tau$ is a stopping time, we write $\Ptt|_\tau$ for the restriction of $\Ptt$ to $\Fcal_\tau$. (This is simply the probability measure on $\Fcal_\tau$ defined by $\Ptt|_\tau(A)=\Ptt(A)$, $A\in\Fcal_\tau$. Think of this as the `coarsening' of $\Ptt$ that only operates on events observable up to time $\tau$.)
    
    \item $\Ptt$ is called \emph{locally dominated by $\Rtt$} (or \emph{locally absolutely continuous with respect to $\Rtt$}), if $\Ptt|_t\ll \Rtt|_t$ for all $t\in\NN$. We write this $\Ptt\ll_\text{loc}\Rtt$. More explicitly, this means that
    \[
    \text{$\Rtt(A)=0$}\quad\Rightarrow\quad \Ptt(A)=0, \quad \text{ for any $A\in\Fcal_t$ and $t\in\NN$}.
    \]
    Local absolute continuity does \emph{not} imply that $\Ptt\ll \Rtt$. However, it does imply that $\Ptt|_\tau\ll \Rtt|_\tau$ for any finite (but possibly unbounded) stopping time $\tau$. Indeed, if $A\in\Fcal_\tau$ and $\Rtt(A)=0$, then $A\cap\{\tau\le t\}\in\Fcal_t$ for all $t$, and hence $\Ptt(A)=\lim_{t\to\infty} \Ptt(A\cap\{\tau\le t\})=0$.
    
    \item A set $\Pbb$ of probability measures on $\Fcal_\infty$ is called locally dominated by $\Rtt$ if every element of $\Pbb$ is locally dominated by $\Rtt$.
    
    \item Any $\Ptt\ll_\text{loc}\Rtt$ has an associated \emph{likelihood ratio process} (often also called \emph{density process}), namely the $\Rtt$-martingale $(Z_t)$ given by $Z_t:=\dd\Ptt|_t/\dd\Rtt|_t$. Being a nonnegative martingale, once $Z_t$ reaches zero it stays there. Thus with the convention $0/0:=1$, the ratios $Z_\tau/Z_t$ are well-defined for any $t\in\NN$ and any finite stopping time $\tau\ge t$. Note that each $Z_t$ is defined up to $\Rtt$-nullsets, and therefore also up to $\Ptt$-nullsets.
    
    \item If $\Ptt\ll_\text{loc}\Rtt$ has likelihood ratio process $(Z_t)$, the following `Bayes formula' holds: for any $t\in\NN$, any finite stopping times $\tau$, and any nonnegative $\Fcal_\tau$-measurable random variable $Y$, one has
    \[
    \EE_\Ptt[Y\mid\Fcal_t]= \EE_\Rtt\left[\left.\frac{Z_\tau}{Z_t}Y\right|\Fcal_t\right] \1_{\{Z_t > 0\}}, \qquad \text{ $\Ptt$-almost surely. }
    \]
    The right-hand side is uniquely defined $\Rtt$-almost surely (not just $\Ptt$-almost surely), and therefore provides a `canonical' version of $\EE_\Ptt [Y\mid\Fcal_t]$. \emph{We always use this version.} This allows us to view such conditional expectations under $\Ptt$ as being well-defined up to $\Rtt$-nullsets.
\end{itemize}

One might ask why we work with \emph{local} domination, rather a `global' condition like $\Ptt \ll \Rtt$ for all $\Ptt$ of interest. The answer is that such a condition would be far too restrictive, as we now illustrate. Let $(X_t)_{t\in\NN}$ be a sequence of random variables. For each $\eta\in\mathbb R$, let $\Ptt^\eta$ be the distribution such that the $X_t$ become i.i.d.\ normal with mean $\eta$ and unit variance. By the strong law of large numbers, $\Ptt^\eta$ assigns probability one to the event $A^\eta :=\{\lim_{t\to\infty}t^{-1}\sum_{s=1}^t X_s=\eta\}$. Moreover, the events $A^\eta$ are mutually disjoint: $A^\eta \cap A^\nu = \emptyset$ whenever $\eta\ne\nu$. This means by definition that the measures $\Ptt^\mu$ are all mutually singular. Since there is an uncountable number of them, there cannot exist a measure $\Rtt$ such that $\Ptt^\eta\ll\Rtt$ for all $\eta$. On the other hand, if $\Ptt^\eta|_t$ denotes the law of the partial sequence $X_1,\ldots,X_t$ for some $t \in \NN$, then the measures $\Ptt^\eta|_t$, $\eta\in\mathbb R$, are all mutually absolutely continuous. In particular, we could (for instance) use $\Rtt=\Ptt^0$ as reference measure and obtain $\Ptt^\eta \ll_{\text{loc}} \Rtt$ for all $\eta\in\mathbb R$.

\subsection{Essential supremum}
\label{sec:esssup}

On some probability space, consider a collection $(Y_\alpha)_{\alpha\in\Acal}$ of random variables, where $\Acal$ is an arbitrary index set. If $\Acal$ is uncountable, the pointwise supremum $\sup_{\alpha\in\Acal}Y_\alpha$ might not be measurable (not a random variable). Moreover, it might happen that $Y_\alpha=0$ almost surely for every $\alpha\in\Acal$, but $\sup_{\alpha\in\Acal}Y_\alpha=1$. For these reasons, the pointwise supremum is often not useful. Instead, one can use the \emph{essential supremum}.

\begin{proposition}
    There exists a $[-\infty,\infty]$-valued random variable $Y$, called the \emph{essential supremum} and denoted by $\esssup_{\alpha\in \Acal}Y_\alpha$, such that
    \begin{enumerate}
        \item $Y\ge Y_\alpha$, almost surely, for every $\alpha\in \Acal$,
        \item if $Y'$ is a random variable that satisfies $Y'\ge Y_\alpha$, almost surely, for every $\alpha\in \Acal$, then $Y'\ge Y$, almost surely.
    \end{enumerate}
    The essential supremum is almost surely unique.
\end{proposition}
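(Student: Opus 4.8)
The plan is to reduce to the case of uniformly bounded random variables and then exhibit the essential supremum as the ordinary (countable) supremum of a carefully chosen countable subfamily. First I would compose each $Y_\alpha$ with a fixed strictly increasing bijection $\phi \colon [-\infty,\infty] \to [0,1]$ (for instance $\phi(y) := \tfrac12 + \tfrac1\pi \arctan y$, with $\phi(\pm\infty)$ the endpoints), so that it suffices to construct $\esssup_{\alpha} \phi(Y_\alpha)$ for the bounded family $(\phi(Y_\alpha))$; because $\phi$ is a strictly increasing bijection, the almost-sure inequalities in properties 1 and 2 are preserved upon applying $\phi^{-1}$ at the very end. So I may assume $0 \le Y_\alpha \le 1$ for every $\alpha$.

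Next I would pass from the uncountable family to a countable one using an expectation-maximization argument. For any countable subset $\mathcal C \subseteq \Acal$, the pointwise supremum $\sup_{\alpha \in \mathcal C} Y_\alpha$ is a genuine random variable (a countable supremum of measurable functions is measurable) with values in $[0,1]$. Set
\[
c := \sup\Big\{ \EE\big[\sup_{\alpha \in \mathcal C} Y_\alpha\big] : \mathcal C \subseteq \Acal \text{ countable}\Big\} \in [0,1],
\]
and choose countable families $\mathcal C_n$ with $\EE[\sup_{\alpha \in \mathcal C_n} Y_\alpha] \to c$. Their union $\mathcal C^* := \bigcup_{n} \mathcal C_n$ is still countable, and the candidate $Y := \sup_{\alpha \in \mathcal C^*} Y_\alpha$ then satisfies $\EE[Y] = c$: indeed $Y$ dominates each $\sup_{\alpha \in \mathcal C_n} Y_\alpha$, giving $\EE[Y] \ge c$, while $\mathcal C^*$ is itself admissible in the supremum defining $c$, giving $\EE[Y] \le c$.

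It then remains to verify the two defining properties. For property 1, fix any $\alpha \in \Acal$ and apply the maximality of $c$ to the countable family $\mathcal C^* \cup \{\alpha\}$, whose supremum equals $\max(Y, Y_\alpha)$; this yields $\EE[\max(Y, Y_\alpha)] \le c = \EE[Y]$, and since the integrand $\max(Y, Y_\alpha) - Y$ is nonnegative with zero expectation, it must vanish almost surely, i.e. $Y_\alpha \le Y$ almost surely. For property 2, any $Y'$ dominating every $Y_\alpha$ almost surely dominates each of the countably many members of $\mathcal C^*$ off a single null set, hence $Y' \ge \sup_{\alpha \in \mathcal C^*} Y_\alpha = Y$ almost surely. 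Uniqueness is then immediate: if $Y$ and $\widetilde Y$ both satisfy properties 1 and 2, applying property 2 to each gives $Y \ge \widetilde Y$ and $\widetilde Y \ge Y$ almost surely.

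The main obstacle is conceptual rather than computational: the pointwise supremum over the uncountable index set $\Acal$ is in general non-measurable and can be strictly larger than any sensible candidate, so the crux is the reduction to a countable subfamily via the expectation-maximization step, together with the closure under countable unions that makes the supremum $c$ actually attained by $\mathcal C^*$. The boundedness reduction is needed precisely so that $c$ is finite and the ``nonnegative integrand with zero expectation'' argument in property 1 is valid; with unbounded $Y_\alpha$ one would otherwise have to guard against $\EE[Y] = +\infty$.
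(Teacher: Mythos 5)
Your proof is correct and complete: the reduction to $[0,1]$-valued variables via a strictly increasing bijection, the expectation-maximization over countable subfamilies, and the verification of the two defining properties are all carried out soundly. The paper itself does not prove this proposition but defers to Section~A.5 of its cited reference (F\"ollmer--Schied), and your argument is essentially the standard proof given there, so there is nothing to add.
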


In words, the essential supremum is the smallest almost sure upper bound on $(Y_\alpha)$. The proposition guarantees that it always exists. In some cases, more can be said: the essential supremum can be obtained as the limit of an increasing sequence.

\begin{proposition}\label{P_esssup_closed_max}
    Suppose $(Y_\alpha)$ is closed under maxima, meaning that for any $\alpha,\beta\in \Acal$ there is some $\gamma\in \Acal$ such that $Y_\gamma=\max\{Y_\alpha,Y_\beta\}$. Then there is a sequence $(\alpha_n)$ such that $(Y_{\alpha_n})$ is an increasing sequence and $\esssup_{\alpha\in \Acal}Y_\alpha = \lim_{n \to \infty}  Y_{\alpha_n}$.
\end{proposition}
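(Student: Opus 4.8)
The plan is to run the classical lattice argument for essential suprema, using closure under pairwise maxima to build the increasing sequence explicitly. First I would reduce to bounded random variables: fix a strictly increasing continuous bijection $g\colon[-\infty,\infty]\to[0,1]$ (for instance a rescaled $\arctan$, extended continuously to the endpoints). Since $g(\max\{a,b\})=\max\{g(a),g(b)\}$, the transformed family $(g(Y_\alpha))$ is again closed under maxima, and one checks via $g^{-1}$ that $\esssup_{\alpha}g(Y_\alpha)=g(\esssup_{\alpha}Y_\alpha)$ and that $g(Y_{\alpha_n})$ converges increasingly if and only if $Y_{\alpha_n}$ does. Hence there is no loss in assuming $0\le Y_\alpha\le 1$, which makes all the expectations below finite and licenses bounded convergence throughout.

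Next I would set $c:=\sup\{\EE[Y_\alpha]:\alpha\in\Acal\}\in[0,1]$, pick a sequence $(\beta_n)$ with $\EE[Y_{\beta_n}]\to c$, and use closure under maxima (iterated finitely) to select $\alpha_n\in\Acal$ with $Y_{\alpha_n}=\max\{Y_{\beta_1},\dots,Y_{\beta_n}\}$. By construction $(Y_{\alpha_n})$ is nondecreasing, so it converges pointwise to some $[0,1]$-valued random variable $Y_\ast$, and bounded convergence gives $\EE[Y_\ast]=\lim_n\EE[Y_{\alpha_n}]$. Since $Y_{\beta_n}\le Y_{\alpha_n}$ while $\EE[Y_{\alpha_n}]\le c$ by definition of $c$, this forces $\EE[Y_\ast]=c$.

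It then remains to verify that $Y_\ast$ satisfies the two defining properties of the essential supremum. Minimality is immediate: any almost sure upper bound $Y'$ of the family dominates each finite maximum $Y_{\alpha_n}$ almost surely, hence dominates the limit $Y_\ast$. For the upper-bound property, i.e.\ $Y_\ast\ge Y_\alpha$ almost surely for every fixed $\alpha$, I would argue by contradiction: if $Y_\alpha>Y_\ast$ on an event of positive probability, then $\max\{Y_\ast,Y_\alpha\}\ge Y_\ast$ with strict inequality on that event, so $\EE[\max\{Y_\ast,Y_\alpha\}]>\EE[Y_\ast]=c$. On the other hand $\max\{Y_\ast,Y_\alpha\}=\lim_n\max\{Y_{\alpha_n},Y_\alpha\}$ is an increasing limit of members $\max\{Y_{\alpha_n},Y_\alpha\}=Y_{\gamma_n}$ of the family (again by closure under maxima), each of expectation at most $c$, so bounded convergence yields $\EE[\max\{Y_\ast,Y_\alpha\}]\le c$, a contradiction. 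Thus $Y_\ast$ is an almost sure upper bound, and together with minimality it equals $\esssup_{\alpha\in\Acal}Y_\alpha$ almost surely, with $(Y_{\alpha_n})$ the desired increasing approximating sequence.

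The step I expect to require the most care is this last contradiction argument, since it is where the maximality of $c$ and the strictness of monotonicity of expectation do the real work; the bounded reduction in the first step exists precisely to make those expectations finite and to justify passing limits inside them. Everything else — existence of the monotone limit, the minimality check, and the transfer back through $g^{-1}$ — is routine.
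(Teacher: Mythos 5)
Your proof is correct and is essentially the argument the paper relies on: the paper gives no proof of this proposition itself but defers to Section~A.5 of \cite{MR2169807}, where precisely this lattice argument appears --- reduce to bounded variables via a strictly increasing transformation, take a sequence maximizing the expectation over the family, upgrade it to an increasing sequence using closure under maxima, and show the monotone limit is an almost-sure upper bound by the expectation-maximality contradiction. Nothing further is needed.
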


For more information about the essential supremum (and infimum), as well as proofs of the above results, we refer to Section~A.5 in \cite{MR2169807}.

\section{Omitted proofs}   \label{A:proofs}

\begin{proof}[Proof of Proposition~\ref{P:210407}]
    It is clear that \ref{P:210407.2} implies \ref{P:210407.1}. Now assume that \ref{P:210407.1} holds, for the moment with $k=1$.
    Let $M$ denote the transition matrix of the Markov process $(X_t)$, i.e., the matrix with elements $p_{j|i}$ for $i, j \in \{0, 1
    \}$. 
    Note that exchangeability yields
    \[
        \Ptt(X_3 = j | X_2 = i) = \Ptt(X_3 = j | X_1 = i) 
    \]
    for $i,j \in \{0, 1\}$, hence $M^2 = M$.  If $M$ has full rank then $M$ is the identity matrix and $(X_t)$ is constant in time. If $M$ does not have full rank then its two rows are the same since its row sums are always equal to one. It follows that $X_1,X_2,\ldots$ are independent. Moreover, by exchangeability $X_1, X_2,X_3,\ldots$ have the same distribution. Thus $(X_t)$ is i.i.d. This shows the statement for $k = 1$.

    Let us assume that \ref{P:210407.1} holds for a  general $k \in \mathbb{N}$. Instead of providing a modification of the above argument now with transition tensors, let us give an alternative probabilistic, more verbose argument. For simplicity, we only focus on the case $k=2$; it is clear how to generalize this argument. Note that the Markov property yields that the law of $X_5$ conditional on $(X_1, X_2, X_3, X_4)$ equals the law of $X_5$ conditional on $(X_3, X_4)$. By exchangeability, this also equals the law of $X_5$ conditional on $(X_1, X_2)$. This again yields that either $(X_t)$ is constant or the law of $X_5$ does not depend on the earlier values $(X_1, X_2, X_3, X_4)$. This observation concludes the proof.  
\end{proof}

\begin{remark}
    In this remark, let us briefly discuss Proposition~\ref{P:210407} in the context of a general Markov process $(Y_t)$, say with a countable state space. This is not required below but sheds additional light on  Proposition~\ref{P:210407}\ref{P:210407.2}. To this end, let us hence assume that $(Y_t)$ is an exchangeable Markov process with countable state space.  First of all, note that exchangeability yields that $(Y_t)$ reaches state $i$ from state $j$ if and only if it reaches state $j$ from state $i$. Indeed, for $s, t \in \mathbb{N}$ we have $\Ptt(Y_s = i, Y_t = j) = \Ptt(Y_s = j, Y_t = i)$. Hence, the state space can be partitioned in subsets, say with index set $\mathcal{I}$, such that in each of these subsets, each state `communicates' with any other. A suitable modification of the proof of  Proposition~\ref{P:210407} now shows that $(Y_t)$ can be constructed as follows. First draw an $\mathcal{I}$-valued random variable to choose the subset of the state space in which $(Y_t)$ will take values, and then choose $i.i.d.$~draws of a distribution whose support is exactly this subset. As a corollary, conditionally on $Y_1$, the random variables $Y_2, Y_3, \ldots$ are i.i.d. Indeed, it is easy to see that a $\{0, 1\}$-valued process $(X_t)$ that satisfies Proposition~\ref{P:210407}\ref{P:210407.2} also satisfies that  $X_2, X_3, \ldots$ are i.i.d.~conditional on $X_1$.
\end{remark}

\begin{proof}[Proof of Theorem~\ref{thm:P-Snell}]
The proof is essentially a simplified version of an argument due to Delbaen \cite[Theorem~11]{MR2276899}. This result is argued in continuous time and on a bounded time interval. For the convenience of the reader, we provide a self-contained proof for this paper's discrete-time, infinite-horizon setup. 

For each fixed $s \in \NN$, $L_s$ is defined as the essential supremum of the family consisting of all $\EE_\Qtt[E_\tau\mid\Fcal_s]$, indexed by all pairs $(\Qtt,\tau)$ with $\Qtt\in\Qcal$ and $\tau\ge s$ a finite stopping time. Here we use a version of the conditional expectation that satisfies $\EE_\Qtt[E_\tau\mid\Fcal_s]=0$ on the event $\{Z_s = 0\}$, where $(Z_t)$ denotes the likelihood ratio process of $\Qtt$ (see Appendix~\ref{sec:technical_appendix}). We claim that this family of conditional expectations is closed under maxima. To prove this claim, let $(\Qtt,\tau)$ and $(\Qtt',\tau')$ be given. Let $A:=\{
\EE_\Qtt[E_\tau\mid\Fcal_s] \ge \EE_{\Qtt'}[E_{\tau'}\mid\Fcal_s]\}$ and set
\(
\tau'' := \tau\mathbf1_A + \tau'\mathbf1_{A^c} 
\)
and
\[
Z''_t := \begin{cases}
Z_t, & t \le s \\
\mathbf1_A Z_t + \mathbf1_{A^c} Z_s \dfrac{Z'_t}{Z'_s}, & t>s 
\end{cases}
\]
where $(Z_t)$ and $(Z'_t)$ are the likelihood ratio processes of $\Qtt$ and $\Qtt'$, respectively. Note that $Z_s' > 0$ on $A^c$ so that $Z''_t$ is well-defined. Since $A$ belongs to $\Fcal_s$ and $\tau,\tau'\ge s$, $\tau''$ is a (finite) stopping time. Moreover, since $\Qcal$ is fork-convex, $(Z''_t)$ is the likelihood ratio process of some $\Qtt''\in\Qcal$. We now compute
\begin{align*}
   \EE_{\Qtt''}[E_{\tau''}\mid\Fcal_s] &=\EE_{\Qtt''}[\mathbf1_A E_{\tau}\mid\Fcal_s] +\EE_{\Qtt''}[\mathbf1_{A^c}E_{\tau'}\mid\Fcal_s] \\
    &=\EE_\Rtt\left[\left.\frac{Z''_\tau}{Z''_s}\mathbf1_A E_{\tau}\right|\Fcal_s\right] +\EE_\Rtt\left[\left.\frac{Z''_{\tau'}}{Z''_s}\mathbf1_{A^c} E_{\tau'}\right|\Fcal_s\right] \\
    &=\EE_\Rtt\left[\left.\frac{Z_\tau}{Z_s}\mathbf1_A E_{\tau}\right|\Fcal_s\right] +\EE_\Rtt\left[\left.\frac{Z'_{\tau'}}{Z'_s}\mathbf1_{A^c} E_{\tau'}\right|\Fcal_s\right] \\
    &= \mathbf1_A \EE_\Qtt\left[\left. E_{\tau}\right|\Fcal_s\right] + \mathbf1_{A^c} \EE_{\Qtt'}\left[ E_{\tau'}\mid\Fcal_s\right] \\
    &= \max\left\{\EE_\Qtt\left[ E_{\tau}\mid\Fcal_s\right],\EE_{\Qtt'}\left[ E_{\tau'}\mid\Fcal_s\right]\right\}.
\end{align*}
This demonstrates closure under maxima.

Now fix any $\Qtt\in\Qcal$ and $s\in\NN$. Thanks to the closure property under maxima, Proposition~\ref{P_esssup_closed_max} shows that there exist families $(\Qtt_n)$ of measures in $\Qcal$ and $(\tau_n)$ of finite stopping times taking values in $\{s, s+1, \ldots\}$ such that $\EE_{\Qtt_n}[E_{\tau_n}\mid\Fcal_s]\uparrow L_s$ almost surely under $\Rtt$, and hence under $\Qtt$. Therefore, by the conditional version of the monotone convergence theorem,
\begin{equation}\label{eq:Snell_proof_eq1}
   \EE_\Qtt[L_s\mid\Fcal_{s-1}] =\EE_\Qtt\left[\left.\lim_{n \to \infty} \EE_{\Qtt_n}[ E_{\tau_n}\mid\Fcal_s]\right|\Fcal_{s-1}\right] = \lim_{n \to \infty} \EE_\Qtt[\EE_{\Qtt_n}[ E_{\tau_n}\mid\Fcal_s]\mid\Fcal_{s-1}].
\end{equation}
Replacing $\Qtt_n$ by $(1-n^{-1})\Qtt_n + n^{-1}\Qtt$ we still have \eqref{eq:Snell_proof_eq1} and, in addition, $\Qtt$ absolutely continuous with respect to $\Qtt_n$. From now on we use this modified choice of $\Qtt_n$. Let $(Z_t)$ and $(Z^n_t)$ be the likelihood ratio processes of $\Qtt$ and $\Qtt_n$, respectively, and define
\[
\widetilde Z^n_t := \begin{cases}Z_t,& t \le s, \\ Z_s\dfrac{Z^n_t}{Z^n_s}, &t > s. 
\end{cases}
\]
By fork-convexity, $(\widetilde Z^n_t)$ is the likelihood ratio process of some $\widetilde \Qtt_n\in\Qcal$. We then get
\begin{align*}
   \EE_\Qtt[\EE_{\Qtt_n}[ E_{\tau_n}\mid\Fcal_s]\mid\Fcal_{s-1}] &=\EE_\Rtt\left[\left.\frac{Z_s}{Z_{s-1}}\EE_\Rtt\left[\left.\frac{Z^n_{\tau_n}}{Z^n_s} E_{\tau_n}\right|\Fcal_s\right]\right|\Fcal_{s-1}\right] \\
    &=\EE_\Rtt\left[\left.\frac{Z_s}{Z_{s-1}}\frac{Z^n_{\tau_n}}{Z^n_s} E_{\tau_n}\right|\Fcal_{s-1}\right] \\
    &=\EE_\Rtt\left[\left.\frac{\widetilde Z^n_{\tau_n}}{\widetilde Z^n_{s-1}} E_{\tau_n}\right|\Fcal_{s-1}\right] \\
    &=\EE_{\widetilde \Qtt_n}\left[ E_{\tau_n}\mid\Fcal_{s-1}\right] \quad
    \le L_{s-1}.
\end{align*}
Combining this with \eqref{eq:Snell_proof_eq1} gives $\EE_\Qtt[L_s\mid\Fcal_{s-1}] \le L_{s-1}$. Iterating this inequality and using that $\Fcal_0$ is trivial yields $\EE_\Qtt[L_s]\le L_0$. 
In particular, $L_s$ is $\Qtt$-integrable.
Since $(E_t)$ is a $\Qcal$-safe e-process, we have $L_0=\sup_{\Qtt\in\Qcal,\,\tau\ge0}\EE_\Qtt[E_\tau]\le1$.  Since $\Qtt\in\Qcal$ and $s\in\NN$ were arbitrary, this proves that $(L_t)$ is a $\Qcal$-NSM with $L_0\le1$.

Let $(L'_t)$ be another $\Qcal$-NSM that dominates $(E_t)$. Then for any $\Qtt\in\Qcal$, any $t\in\{0,1,\ldots\}$, and any finite stopping time $\tau\ge t$, the optional stopping theorem under $\Qtt$ gives $L'_t\ge\EE_\Qtt[L'_\tau\mid\Fcal_t] \ge\EE_\Qtt[E_\tau\mid\Fcal_t]$. Therefore $L'_t\ge L_t$ by the definition of essential supremum.
\end{proof}

\end{document}